\numberwithin{equation}{section}
\newtheorem{theorem}{Theorem}[section]
\newtheorem{proposition}[theorem]{Proposition}
\newcommand{\N}{{\mathbb N}}
\newcommand{\R}{{\mathbb R}}
\newcommand{\wt}{\widetilde}
\def\bye{\end{document}}
\def\by{\end{proof}\bye}
\def\cP{\mathcal P}
\def\beq{\begin{equation}}
\def\eeq{\end{equation}}
\def\cH{\mathcal H}
\def\wt{\widetilde}
\newtheorem{thm}{\textbf{Theorem}}[section]
\newtheorem{lem}[thm]{\textbf{Lemma}}
\newtheorem{prop}[thm]{\textbf{Proposition}}
\theoremstyle{remark}
\newtheorem{rem}[thm]{\textbf{Remark}}
\newtheorem{cor}[thm]{\textbf{Corollary}}
\newtheorem{exe}[thm]{\textbf{Example}}
\theoremstyle{definition}
\newtheorem{defn}[thm]{{Definition}}
\newtheoremstyle{Claim}{}{}{\itshape}{}{\itshape\bfseries}{:}{ }{#1}
\theoremstyle{Claim}
\newcommand{\C}{\mathcal{C}}
\newcommand{\Ss}{\mathcal{S}}
\newcommand{\USC}{\mathrm{USC}}
\newcommand{\LSC}{\mathrm{LSC}}
\newcommand{\EC}[1]{\overrightarrow{#1}}
\newcommand{\veps}{\varepsilon}
\title[Principal eigenvalue of $k$-Hessians]{Principal eigenvalues for $k$-Hessian operators by maximum principle methods}
\author{Isabeau Birindelli}
\address{Dipartimento di Matematica ``G.\ Castelnuovo''\\ Sapienza Universit\`a di Roma\\ P.le Aldo Moro 2\\ 00185--Roma, Italy}
\email{isabeau@mat.uniroma.it (Isabeau Birindelli)}
\author{Kevin R.\ Payne}
\address{Dipartimento di Matematica ``F. Enriques''\\ Universit\`a di Milano\\ Via C. Saldini 50\\ 20133--Milano, Italy}
\email{kevin.payne@unimi.it (Kevin R. Payne)}\thanks{Payne partially supported by the Gruppo Nazionale per l'Analisi Matematica, la Probabilit\`a e le loro Applicazioni (GNAMPA) of the Istituto Nazionale di Alta Matematica (INdAM) and the projects: GNAMPA 2017 ``Viscosity solution methods for fully nonlinear degenerate elliptic equations'', GNAMPA 2018 ``Costanti critiche e problemi asintotici per equazioni completamente non lineari'' e GNAMPA 2019 ``Problemi differenziali per operatori fully nonlinear fortemente degeneri''.}
\date{\today} \linespread{1.2}
\keywords{maximum principles, comparison principles, principal eigenvalues, $k$-Hessian operators, $k$-convex functions, admissible viscosity solutions, elliptic sets.}
\subjclass[2010]{35J60, 35J70, 35D40, 35B51, 35P30}
\begin{document}

\maketitle
\vspace{-4ex}
\begin{abstract}
For fully nonlinear $k$-Hessian operators on bounded strictly $(k-1)$-convex domains  $\Omega$ of $\R^N$, a characterization of the principal eigenvalue associated to a $k$-convex and negative principal eigenfunction will be given as the supremum over values of a spectral parameter for which {\em admissible viscosity supersolutions} obey a minimum principle. The admissibility condition is phrased in terms of the natural closed convex cone $\Sigma_k \subset \Ss(N)$ which is an {\em elliptic set} in the sense of Krylov \cite{Kv95} which corresponds to using $k$-convex functions as admissibility constraints in the formulation of viscosity subsolutions and supersolutions. Moreover, the associated principal eigenfunction is constructed by an iterative viscosity solution technique, which exploits a compactness property which results from the establishment of a global H\"older estimate for the unique $k$-convex solutions of the approximating equations.
\end{abstract}

\setcounter{tocdepth}{1}
\tableofcontents

\section{Introduction} For each $1\leq k\leq N$, the $k$-Hessian operator acting on $u \in C^2(\Omega)$ with $\Omega \subseteq \R^N$ open is defined by
\begin{equation}\label{S_kIn}
S_k(D^2u):=\sigma_k(\lambda(D^2u)) := \sum_{1 \leq i_1 < \cdots < i_k \leq N} \lambda_{i_1} (D^2u)\cdots \lambda_{i_k}(D^2u)
\end{equation}
where $\lambda(D^2u)$ indicates the N-vector of ordered eigenvalues of the Hessian matrix $D^2u$ and 
$\sigma_k(\lambda)$ is the elementary symmetric polynomial, which is homogeneous of degree $k$. For example, one has
$$S_1(D^2u)=\Delta u = {\rm tr}(D^2u)\ \quad \mbox{and} \quad S_N(D^2u)= {\rm det}(D^2u).$$
Before describing the scope of this paper, let us mention that, for each $k>1$, 
the $k$-Hessian is fully nonlinear and is (degenerate) elliptic only when constrained in a suitable sense. More precisely, in general one does not have  
\begin{equation}\label{DE_Intro}
	X\leq Y\Rightarrow S_k(X)\leq S_k(Y)
\end{equation}
if $X,Y$ are free to range over all of $\Ss(N)$, the space of $N \times N$ symmetric matrices. However, one will have \eqref{DE_Intro} if one constrains $X$ (and hence $Y$) to belong to 
\begin{equation}\label{gamma_k}
\Sigma_k := \{ A \in \Ss(N): \ \lambda(A) \in \overline{\Gamma}_k\}
\end{equation}
where
\begin{equation}\label{gamma_k}
		\Gamma_k := \{ \lambda \in \R^N: \ \sigma_j(\lambda) > 0 , \ j = 1, \ldots , k \}.
\end{equation}
This leads us to work in the context of admissible viscosity solutions, using the notion of {\em elliptic sets} as introduced by Krylov \cite{Kv95}. This notion has given rise to the development of an organic theory of viscosity solutions with admissibility constraints beginning with Harvey-Lawson \cite{HL09}. In the terminology of \cite{CHLP19}, $\Sigma_k$ is a {\em (constant coefficient) pure second order subequation constraint set} which requires that $\Sigma_k \subset \Ss(N)$ be a proper closed subset satisfying the {\em positivity property}
\begin{equation}\label{P_Intro}
	 A \in \Sigma_k \ \Rightarrow  \ A + P \in \Sigma_k \ \ \text{for each} \ P \in \mathcal{P}, \ \ \cP := \{ P \in \Ss(N): \ P \geq 0 \}
\end{equation} 
and the {\em topological property} 
\begin{equation}\label{T_Intro}
 \Sigma_k = \overline{\Sigma_k^{\circ}},
 \end{equation}
 which is the closure in $\Ss(N)$ of the interior of $\Sigma_k$.

In the nonlinear potential theory language of \cite{HL09}, one would say that $u \in \USC(\Omega)$ is {\em $\Sigma_k$-subharmonic at $x_0$} if $u$ is $k$-convex at $x_0 \in \Omega$ in the sense of Definition \ref{defn:k_convex}. Moreover, one has the following {\em coherence property:} if $u \in \USC(\Omega)$ is twice differentiable in $x_0$ then
	\begin{equation}\label{coherence}  
	\mbox{$u$ is $\Sigma_k$-subharmonic at $x_0 \ \ \Leftrightarrow \ \ D^2u(x_0) \in \Sigma_k$;}
		\end{equation}
that is, the classical and viscosity notions of $k$-convexity coincide at points of twice differentiability. We notice that the reverse implication $(\Leftarrow)$ depends on the positivity property \eqref{P_Intro} of $\Sigma_k$. The topological property \eqref{T_Intro} is sufficient for the local construction of classical strict subsolutions $\varphi$; that is, $S_k(D^2 \varphi) \leq 0$. It also plays and important role in the {\em Harvey-Lawson duality} which leads to an elegant formulation of supersolutions by duality (see Remark 2.12 and Proposition 3.2 of \cite{CHLP19} for 
details).

The main purpose of this paper is to study the {\em principal eigenvalue} associated to the operator $-S_k$ with homogeneous Dirichlet data in bounded domains $\Omega$ by using maximum principle methods. We shall consider $\Omega \subset \R^N$ whose boundaries are of class $C^2$  and {\em $(k-1)$-convex}. The notion of principal eigenvalue that we will define is inspired by that of Berestycki, Nirenberg and Varadhan in their groundbreaking paper \cite{BNV94}.

More precisely, for each $k \in \{1, \ldots, N\}$ fixed, using the notion of $k$-convexity in a viscosity sense, we introduce
$$
\Phi_k^{-}(\Omega) := \{ \psi \in \USC(\Omega): \psi \ \text{is $k$-convex and negative in} \ \Omega\},
$$
We can then consider 
$$
\Lambda_k:= \{ \lambda \in \R: \ \exists \psi \in \Phi_k^{-}(\Omega)\ \text{with} \  S_k(D^2 \psi) + \lambda \psi |\psi|^{k-1} \geq 0 \ \text{in} \ \Omega \},
$$
where the inequality above is again in the admissible viscosity sense, and define our candidate for a (generalized) principal eigenvalue by
$$
\lambda_1^{-}:= \sup \Lambda_k.
$$

We will prove that $\lambda_1^{-}$ is an upper bound for the validity of the minimum principle in $\Omega$; that is, we will show that for any $\lambda<\lambda_1^{-}$ the operator $F[\cdot]=S_k(D^2\cdot)+\lambda   \cdot |\cdot|^{k-1}$ satisfies the minimum principle in $\Omega$ (see Theorem \ref{thm:MPC}). Moreover, in Theorem \ref{thm:psi_1} we will show that $\lambda_1^{-}$ is actually an eigenvalue for the operator $-S_k$ in the sense that there exists $\psi_1<0$ in $\Omega$ such that
$$
\left\{
\begin{array}{lc}
S_k(D^2\psi_1)+\lambda _1^-  \psi_1 |\psi_1|^{k-1}=0 & \mbox{in} \ \Omega\\
\psi_1=0& \mbox{on} \ \partial\Omega.
\end{array}
\right.
$$
Notice that is the linear case $k = 1$ this conforms to the usual notion of eigenvalue. 
If $k$ is odd integer, then $S_k$ is an odd operator and we would have a maximum principle characterization for $\lambda_1^+$; that is, a principal eigenvalue which  corresponds to a positive concave principal eigenfunction.

We should mention that the $k$-Hessians are variational and hence it is possible to give a variational characterization of the principal eigenvalue through a generalized Rayleigh quotient. This was done Lions \cite{PLL85} for $k = N$ and Wang \cite{Wa95} for general $k$.  Using the minimum principle, we prove that both characterizations coincide (see Corollary \ref{cor:PEC}). Hence the existence of the eigenfunction corresponding to the so-called eigenvalue is just a consequence of this equality, using the results in \cite{Wa95, PLL85}. 

Nonetheless, we wish to give a proof of the existence that is independent of the variational characterization. Indeed, the interest in defining the principal eigenvalue by way of \eqref{def:peval1} is twofold. On one hand, it allows one to prove the minimum principle for $\lambda$ below $\lambda_1^{-}$ and on the other hand, it strongly suggests that it may be possible to extend the results to a class of fully non linear operators that are not variational, but which may include the $k$-Hessians. For example,
with $0<\alpha\leq \beta$ and $S_{\alpha,\beta}=\{A\in \Ss(N): \  \alpha I\leq A\leq \beta I\}$, one might consider the operators defined by
$$
	\sigma_k^+(D^2u):=\sup_{A\in S_{\alpha,\beta}}\sigma_k(\lambda(AD^2u)).
$$
This will be the subject of a subsequent paper.

We should also mention that the challenges for a viscosity solution approach to the existence of a solution $\psi_1$ to the eigenvalue problem 
\begin{equation}\label{EVP}
\left\{ \begin{array}{cc}
S_k(D^2 \psi_1) + \lambda_1^{-}\psi_1 |\psi_1|^{k-1} = 0 & {\rm in} \ \Omega \\
\psi_1 = 0 & {\rm on} \ \partial \Omega
\end{array} \right.
\end{equation}
include the lack of global monotonicity in the Hessian $D^2 \psi_1$ and the ``wrong'' 
monotonicity in $\psi_1$. The argument will involve an iterative construction that has its origins in \cite{BD06, BD07} and was used with success in degenerate settings in 
\cite{BGI18}.  At some point in the argument, a compactness property is needed for the sequence of $k$-convex solutions to the approximating equations. The needed compactness property follows from a global H\"older estimate on the sequence of approximating solutions. Unfortunately, using merely maximum principle methods, we are able to prove the estimate only in the case when $k>\frac{N}{2}$. See Remark \ref{rem:GHB} for a discussion of this point, including the use of some measure theoretic techniques (to  augment the maximum principle techniques developed herein) in order establish existence in the nonlinear range $1 < k \leq \frac{N}{2}$. 

\section{Preliminaries.}

In all that follows, $\Omega$ will be a bounded open domain in $\R^N$, $\Ss(N)$ is the space of $N \times N$ symmetric matrices with real entries and 
\begin{equation}\label{evals_A}
\lambda_1(A) \leq \cdots \leq \lambda_N(A)
\end{equation}
are the $N$ eigenvalues of $A \in \Ss(N)$ written in increasing order. The spaces of upper, lower semi-continuous functions on $\Omega$ taking values in $[-\infty, +\infty), (-\infty, +\infty]$ will be denoted by $\USC(\Omega), \LSC(\Omega)$ respectively.

\subsection{$k$-convex functions and $k$-Hessians}

We begin by describing the subset $\Sigma_k \subset \Ss(N)$ which serves to define $k$-convex functions by way of a viscosity constraint on the Hessian of $u \in \USC(\Omega)$. For $k \in \{1, \ldots, N\}$ denote by 
\begin{equation}\label{sigma_k}
\sigma_k(\lambda) := \sum_{1 \leq i_1 < \cdots < i_k \leq N} \lambda_{i_1} \cdots \lambda_{i_k}
\end{equation}
the {\em kth-elementary symmetric function} of $\lambda \in \R^N$ and consider the open set defined by
\begin{equation}\label{gamma_k}
	\Gamma_k := \{ \lambda \in \R^N: \ \sigma_j(\lambda) > 0 , \ j = 1, \ldots , k \},
\end{equation}
which is clearly a cone with vertex at the origin and satisfies
\begin{equation}\label{chain1}
\Gamma^+ \subset \Gamma_N \subset \Gamma_{N-1} \cdots \subset \Gamma_1  
\end{equation}
where $\Gamma^+ := \{ \lambda \in \R^N: \ \lambda_i > 0 \ \ \text{for each} \ i \}$ is the positive cone in $\R^N$ and
\begin{equation}\label{boundary_cones}
	\lambda \in \partial \Gamma_k \ \ \Rightarrow \ \ \sigma_k(\lambda) = 0.
\end{equation}
 
Additional fundamental properties of the cones $\Gamma_k$ are most easily seen by using alternate characterizations of \eqref{gamma_k_garding}. First, the homogeneous polynomials \eqref{sigma_k} are examples of {\em hyperbolic polynomials in the sense of G{\aa}rding}. More precisely, for each fixed $k$, $\sigma_k$ is homogeneous of degree $k$ and is {\em hyperbolic in the direction $a = (1, \ldots, 1) \in \R^N$}; that is, the degree $k$ polynomial in $t \in \R$ defined by  
\begin{equation}\label{hyp_poly}
\sigma_k(ta + \lambda) = \sum_{1 \leq i_1 < \cdots < i_k \leq N} \left(t + \lambda_{i_1} \right) \cdots \left(t + \lambda_{i_k} \right)
\end{equation}
has $k$ real roots for each $\lambda \in \R^N$. The cone $\Gamma_k$ can be defined as 
\begin{equation}\label{gamma_k_garding}
\mbox{ the connected component of $\R^N$ containing $a = (1, \ldots, 1) $ on which $\sigma_k > 0$.}
\end{equation}
This form of $\Gamma_k$ corresponds to G{\aa}rding's original definition for a general hyperbolic polynomial for which the form \eqref{gamma_k} results in the special case of $\sigma_k$. The reader might wish to consult section 1 of Caffarelli-Nirenberg-Spruck \cite{CNS85}.

G{\aa}rding's beautiful theory of hyperbolic polynomials in \cite{Ga59} applied to $\sigma_k$ includes two important consequences; namely {\em convexity}
\begin{equation}\label{gamma_convex}
\mbox{ the cone $\Gamma_k$ is convex}
\end{equation}
and {\em (strict) monotonicity}
\begin{equation}\label{sigma_k_monotonicity}
	\sigma_k(\lambda + \mu) > \sigma_k(\lambda) \ \ \text{for each} \ \lambda, \mu \in \Gamma_k.
\end{equation}
Since $\Gamma^+ \subset \Gamma_k$ for each $k$ and $\Gamma_k$ is a cone, one has that $\Gamma_k$ is a {\em monotonicity cone} for itself; that is,
\begin{equation}\label{monotonicity_cone}
	\Gamma_k + \Gamma_k \subset \Gamma_k
\end{equation}
and, in particular,
\begin{equation}\label{Gamma_positivity}
\Gamma^+ + \Gamma_k \subset \Gamma_k,
\end{equation}
which, with the monotonicity \eqref{sigma_k_monotonicity} for $\mu \in \Gamma^+$, gives rise to the degenerate ellipticity of $k$-Hessian operators as it will be recalled below.

Moreover, Korevaar \cite{Ko87} has characterized $\Gamma_k$ as
	\begin{multline}\label{sigma_k_korevaar}
	\Gamma_k = \{ \lambda \in \R^N: \ \sigma_k(\lambda) > 0,  \frac{\partial \sigma_k}{\partial \lambda_{i_1}}(\lambda) > 0 , \ldots , \frac{\partial^{k-1} \sigma_k}{\partial \lambda_{i_1} \cdots  \partial \lambda_{\i_{k-1}}}(\lambda) > 0  \\
	\ \text{for all}  \ \{1 \leq i_1 < \cdots < \i_{k-1} \leq N\} \}
	\end{multline}
which implies that 
\begin{equation}\label{sum_of_lambdas}
\sum_{j = 1}^{N-k+1} \lambda_{i_j} > 0 \ \ \text{on} \ \Gamma_k \ \ \text{for all} \ \ \{1 \leq i_1 < \cdots < \i_{k-1} \leq N\} \}.
\end{equation}
When $k = N$ this says that each $\lambda_i > 0$ for each $i$ and hence
\begin{equation}\label{Gamma+}
	\Gamma_N = \Gamma^+.
\end{equation}
Additional characterizations of $\Gamma_k$, interesting and useful identities and inequalities involving $\sigma_k$ can be found in \cite{Iv85}, \cite{CNS85}, \cite{Ur90}, \cite{LT94}, \cite{Li96}, \cite{TW99} and \cite{Wa09}. For a modern and self-contained account of G{\aa}rding's theory and its relation to the Dirichlet problem one can consult \cite{HL10} and \cite{HL13}.
 
Clearly the closed G{\aa}rding cone $\overline{\Gamma}_k$ is also convex with 
\begin{equation}\label{chain2}
\overline{\Gamma}^+ = \overline{\Gamma}_N \subset \overline{\Gamma}_{N-1} \cdots \subset \overline{\Gamma}_1  
\end{equation}
and by continuity the monotonicity properties extend to say
\begin{equation}\label{monotonicity_closed}
\overline{\Gamma}_N + \overline{\Gamma}_k \subset \overline{\Gamma}_k \ \ \text{and} \ \ 	\sigma_k(\lambda + \mu) \geq \sigma_k(\lambda) \ \ \text{for each} \ \lambda \in \overline{\Gamma}_k, \mu \in \overline{\Gamma}_N.
\end{equation}
 
For $A \in \Ss(N)$ denote by $\lambda(A):= (\lambda_1(A), \ldots, \lambda_N(A))$ the vector of eigenvalues \eqref{evals_A} and define the {\em $k$-convexity constraint set} by
\begin{equation}\label{gamma_k}
\Sigma_k := \{ A \in \Ss(N): \ \lambda(A) \in \overline{\Gamma}_k\}.
\end{equation}
For $u \in C^2(\Omega)$, one says that $u$ is {\em $k$-convex on $\Omega$} if 
\begin{equation}\label{k_convex_C2}
D^2u(x) \in \Sigma_k \ \ \text{for each} \ x \in \Omega
\end{equation}
and we will say that $u \in C^2(\Omega)$ is {\em strictly $k$-convex on $\Omega$} if
$D^2u(x)$ lies in the interior $\Sigma_k^{\circ}$ of $\Sigma_k$ for each $x \in \Omega$. 
Notice that
\begin{equation}\label{chain3}
	   \Sigma_N \subset \Sigma_{N-1} \cdots \subset \Sigma_1 
\end{equation}
where  
\begin{equation}\label{Theta_1_N}
\Sigma_1 = \mathcal{H} := \{ A \in \Ss(N): \ {\rm tr}(A) \geq 0 \} \ \  \text{and} \ \ \Sigma_N = \mathcal{P} := \{ A \in \Ss(N): \ \lambda_1(A) \geq 0 \};
\end{equation}
that is, $1$-convex functions are classically subharmonic (with respect to the Laplacian) and $N$-convex functions are ordinary convex functions. This consideration carries over to $u \in \USC(\Omega)$ where one defines $k$-convexity by interpreting \eqref{k_convex_C2} in the viscosity sense.

\begin{defn}\label{defn:k_convex}
	Given $u \in \USC(\Omega)$, we say that $u$ is {\em $k$-convex at $x_0 \in \Omega$} if for every $\varphi$ which is $C^2$ near $x_0$
	\begin{equation}\label{k_convex}
	\mbox{ $u - \varphi$ has a local maximum in $x_0 \ \ \Rightarrow \ \ D^2 \varphi(x_0) \in \Sigma_k$.}
	\end{equation}
	We say that $u$ is {\em $k$-convex on $\Omega$} if this pointwise condition holds for each $x_0 \in \Omega$.
	\end{defn} 

\begin{rem}\label{rem:k-convex}
In the viscosity language of superjets, the condition \eqref{k_convex} means
 		\begin{equation}\label{k_convex_jets}
 	(p,A) \in J^{2,+}u(x_0) \ \ \Rightarrow \ \ A \in \Sigma_k,
 	\end{equation}
and there are obviously many equivalent formulations. For example, one can restrict to {\em upper test functions} $\varphi$ which are quadratic and satisfy $\varphi(x_0) = u(x_0)$. See Appendix A of \cite{CHLP19} for a discussion of this point in the context of {\em viscosity subsolutions with admissibility constraints}. Moreover, since $\Sigma_k$ is closed, one also has 
\begin{equation}\label{k_convex_jets_closure}
(p,A) \in \overline{J}^{2,+}u(x_0) \ \ \Rightarrow \ \ A \in \Sigma_k;
\end{equation}
where, as usual, $(p,A) \in \overline{J}^{2,+}u(x_0)$ means that there exists $\{x_k, p_k, A_k\}_{k \in \N}$ such that $(p_k,A_k) \in J^{2,+}u(x_k)$ and $(x_k,p_k,A_k) \to (x_0,p,A)$ as $k \to +\infty$.
	\end{rem}

The $k$-convex constraint $\Sigma_k$ will be used as an {\em admissibility constraint} for the solutions of $k$-Hessian equations considered here. One defines the {\em $k$-Hessian operator} by 
 		\begin{equation}\label{k_Hessian_op}
S_k(A):= \sigma_k( \lambda(A)) \quad \text{for} \ A \in \Ss(N) \ \text{and} \  k \in \{1, \ldots, N\},
\end{equation}
where $\lambda(A)$ is the vector of eigenvalues of $A$ and $\sigma_k$ is given by \eqref{sigma_k}. Notice that 
\begin{equation}\label{S_j_signs}
\mbox{$S_j(A) \geq 0$ \ \ for each \ $A \in \Sigma_k$ \ and each \ $j = 1, \ldots k$}.
\end{equation}
In particular $S_k$ is non-negative on $\Sigma_k$. Important special cases are
 		\begin{equation}\label{1_N_Hessian_op}
S_1(A):= {\rm tr}(A) \quad \text{and} \quad  S_N(A):= {\rm det}(A).
\end{equation}
The following Lemma gives the fundamental structural properties of $\Sigma_k$ and $S_k$. 

\begin{lem}\label{lem:S_Theta_k} For each fixed $k \in \{1, \ldots, N\}$, one has the following properties.
	\begin{itemize}
		\item[(a)]  $\Sigma_k$ is a closed convex cone with vertex at the origin.
		\item[(b)] $\Sigma_k$ is an \underline{elliptic set}; that is, $\Sigma_k \subsetneq \Ss(N)$, is closed, non empty and satisfies the \underline{positivity property}
\begin{equation}\label{P}
			A \in \Sigma_k \ \ \Rightarrow \ \ A + P \in \Sigma_k \ \ \text{for each} \ P \in \mathcal{P}, 
\end{equation}
		where $\mathcal{P}$ are the non negative matrices as defined in \eqref{Theta_1_N}. 
		\item[(c)] $\Sigma_k$ satisfies the \underline{topological property}
\begin{equation}\label{T}
		\Sigma_k = \overline{\Sigma_k^{\circ}}, 
\end{equation}
		where $\Sigma_k^{\circ}:= \{A \in \Ss(N): \lambda(A) \in \Gamma_k \}$ is the interior of $\Sigma_k$.
		\item[(d)] The $k$-Hessian is \underline{increasing along $\Sigma_k$}; that is, 
\begin{equation}\label{DE}
		S_k(A + P) \geq S_k(A) \ \ \text{for each} \ A \in \Sigma_k \ \text{and} \ P \in \mathcal{P}. 
\end{equation}
Moreover, the inequality in \eqref{DE} is strict if $P \in \mathcal{P}^{\circ}$; that is, if $P > 0 \ \text{in} \ \Ss(N)$.
	\end{itemize}
\end{lem}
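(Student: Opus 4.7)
My plan is to reduce each of (a)--(d) to an already recorded property of the eigenvalue cone $\overline{\Gamma}_k$. Since the ordered-eigenvalue map $\lambda \colon \Ss(N) \to \R^N$ is continuous (Lipschitz, by Weyl), $\Sigma_k = \lambda^{-1}(\overline{\Gamma}_k)$ is closed and inherits the cone-with-vertex-at-the-origin property directly from $\overline{\Gamma}_k$, while $\Sigma_k^{\circ} = \lambda^{-1}(\Gamma_k)$ is open. Properness ($-I \notin \Sigma_k$ since $\sigma_1(-1,\ldots,-1) = -N < 0$) and non-emptiness ($I \in \Sigma_k$) are immediate. The genuine difficulty is that $\lambda$ does not linearize matrix addition, so statements about $\Sigma_k + \mathcal{P}$ and convex combinations need more care. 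I would prove the four assertions in the order (b), (c), (a), (d).

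For (b), positivity follows by combining Weyl's monotonicity theorem (for $P \geq 0$, $\lambda_i(A+P) \geq \lambda_i(A)$ in the increasing ordering) with the eigenvalue-level closed monotonicity \eqref{monotonicity_closed}: writing $\mu := \lambda(A+P) - \lambda(A) \in \overline{\Gamma}^+ = \overline{\Gamma}_N$, one obtains $\lambda(A+P) = \lambda(A) + \mu \in \overline{\Gamma}_k + \overline{\Gamma}_N \subset \overline{\Gamma}_k$. For the topological property (c), the inclusion $\overline{\Sigma_k^{\circ}} \subset \Sigma_k$ is automatic, so it suffices to show $A + \varepsilon I \in \Sigma_k^{\circ}$ for every $A \in \Sigma_k$ and $\varepsilon > 0$, which amounts to the eigenvalue statement $\overline{\Gamma}_k + \Gamma^+ \subset \Gamma_k$. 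I would deduce this by showing $\sigma_j(\lambda + \mu) > 0$ for every $j \leq k$, $\lambda \in \overline{\Gamma}_k \subset \overline{\Gamma}_j$ and $\mu \in \Gamma^+ \subset \Gamma_j$: approximating $\lambda$ by $\lambda_n \in \Gamma_j$ and applying the strict G{\aa}rding monotonicity \eqref{sigma_k_monotonicity} (for $\sigma_j$ on $\Gamma_j$) gives $\sigma_j(\lambda_n + \mu) > \sigma_j(\mu) > 0$, and the strict lower bound survives the passage to the limit.

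For convexity in (a), I would not try to push convexity through the eigenvalue map but rather invoke G{\aa}rding's theorem at the matrix level: by the factorization \eqref{hyp_poly}, $A \mapsto S_k(A)$ is hyperbolic in the direction $I$ on $\Ss(N)$, so its G{\aa}rding cone, i.e.\ the connected component of $\{S_k > 0\}$ containing $I$, is open and convex. Continuity of $\lambda$ together with the connectedness of $\Gamma_k$ identifies this cone with $\Sigma_k^{\circ}$, and by (c) its closure is $\Sigma_k$, whence $\Sigma_k$ is convex.

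For (d), the non-strict inequality $S_k(A+P) \geq S_k(A)$ is (b) read through $\sigma_k$: with $\mu \in \overline{\Gamma}_N$ from Weyl, \eqref{monotonicity_closed} applied to $\sigma_k$ concludes. For the strict version with $P \in \mathcal{P}^{\circ}$, Weyl produces $\mu \in \Gamma^+$, and I would integrate along the segment
$$
S_k(A+P) - S_k(A) = \int_0^1 \nabla \sigma_k\bigl(\lambda(A) + t\mu\bigr) \cdot \mu\, dt.
$$
By (c), $\lambda(A) + t\mu \in \Gamma_k$ for every $t \in (0,1]$; Korevaar's characterization \eqref{sigma_k_korevaar} then ensures all components of $\nabla \sigma_k$ are strictly positive on $\Gamma_k$, while $\mu$ has strictly positive components, so the integrand is strictly positive on $(0,1]$. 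The main obstacle I anticipate is the matrix-level convexity in (a): the nonlinearity of the eigenvalue map forces the use of G{\aa}rding's matrix-level hyperbolicity rather than a direct componentwise argument, and the companion strict inclusion $\overline{\Gamma}_k + \Gamma^+ \subset \Gamma_k$ needed in (c) must be upgraded by hand from the merely closed monotonicity stated in \eqref{monotonicity_closed}.
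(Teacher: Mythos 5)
Your proposal is correct and complete, and in several places it fills in arguments that the paper's proof treats as immediate or omits entirely; it is worth comparing the two.

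For part (a) the paper simply asserts that closedness, convexity, and the cone property of $\Sigma_k$ ``follow from the corresponding properties for $\overline{\Gamma}_k$.'' You rightly flag that convexity does not pass through the nonlinear eigenvalue map for free, and you supply the G{\aa}rding route: $S_k$ is hyperbolic in the direction $I$ by \eqref{hyp_poly}, so its matrix G{\aa}rding cone is open and convex, and it coincides with $\Sigma_k^{\circ}$; convexity of $\Sigma_k$ then follows from (c). The paper's one-line claim can also be justified via the classical Davis--Lewis theorem on spectral sets of permutation-invariant convex sets, but either way the step is not trivial and you are right to single it out. For (b), your Weyl-plus-cone-monotonicity argument is essentially the computation the paper writes ($\lambda_i(A+P)\geq\lambda_i(A)$ and then $\sigma_j$ monotone on $\overline{\Gamma}_k$), just phrased through \eqref{monotonicity_closed}. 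For (c), the paper invokes the abstract fact that a closed convex set with nonempty interior equals the closure of its interior; you instead prove the concrete inclusion $\overline{\Gamma}_k+\Gamma^+\subset\Gamma_k$ by approximating and passing to the limit through the strict G{\aa}rding monotonicity. Both are correct; your version has the advantage that it is reused in (d). Finally, the paper's written proof does not actually address (d) at all; your proof of the nonstrict inequality by the same Weyl argument, and of the strict inequality by integrating $\nabla\sigma_k\cdot\mu$ along $t\in(0,1]$ using Korevaar's characterization \eqref{sigma_k_korevaar} together with $\overline{\Gamma}_k+\Gamma^+\subset\Gamma_k$, is correct and supplies what the paper leaves implicit. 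One small remark: in (c), when you invoke ``$\sigma_j(\lambda_n+\mu)>\sigma_j(\mu)$'' you are applying \eqref{sigma_k_monotonicity} at level $j$ with the roles of the two arguments exchanged, which is licit since $\sigma_j$ is symmetric in its two G{\aa}rding-cone arguments and $\mu\in\Gamma^+\subset\Gamma_j$; it would be worth making that explicit if you write this out in full.
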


\begin{proof}
	Part (a) follows from the corresponding properties for $\overline{\Gamma}_k$.
	For the claims in part (b), $\Sigma_k$ is closed by part (a). Each $\Sigma_k$ is non empty since $\Sigma_k \supset \Sigma_N = \mathcal{P}$ as noted in \eqref{chain3} and \eqref{Theta_1_N}. Clearly $\Sigma_1 \subsetneq \Ss(N)$ and hence the same is true for the other values of $k$ by \eqref{chain3}. The property \eqref{T} also clearly holds since $\Sigma_k$ is a closed convex cone with non-empty interior.
	
	For the property \eqref{P}, if $A \in \Sigma_k$ and $P \geq 0$ then $\lambda_i(A+P) \geq \lambda_i(A)$ for each $i= 1, \ldots, N$ and hence
	$$
		\sigma_j(\lambda(A +P)) \geq \sigma_j(\lambda(A)) \geq 0 \ \ \text{for each} \ j = 1, \ldots, k,
	$$  
	and hence \eqref{P}.
\end{proof}

As noted in the introduction, the notion of elliptic sets was introduced by Krylov \cite{Kv95} and starting with the groundbreaking paper Harvey-Lawson \cite{HL09}, has given birth to a nonlinear potential theory approach to viscosity solutions with admissibility constraints. In the terminology of  \cite{CHLP19}, $\Sigma_k$ is a {\em (constant coefficient) pure second order subequation constraint set} which requires that $\Sigma_k \subset \Ss(N)$ be a proper closed subset satisfying the positivity property \eqref{P} and the topological property \eqref{T}. Moreover, being also a convex cone, it is a {\em monotonicity cone subequation} and $\Sigma_k$ is the maximal monotonicity cone for $\Sigma_k$ and its dual $\wt{\Sigma}_k$ as defined below in \eqref{Dirichlet_dual} (see Proposition 4.5 of \cite{CHLP19}). One has reflexivity ($\wt{\wt \Sigma}_k = \Sigma_k$) if topological property \eqref{T} holds (see Proposition 3.2 of \cite{CHLP19}, for example). The following fact, mentioned in the introduction, is worth repeating here.
                     
\begin{rem}\label{rem:coherence}
	In the nonlinear potential theory language of \cite{HL09}, one would say that $u \in \USC(\Omega)$ is {\em $\Sigma_k$-subharmonic at $x_0$} if $u$ is $k$-convex at $x_0 \in \Omega$ in the sense of Definition \ref{defn:k_convex}. Moreover, one has the following {\em coherence property:} if $u \in \USC(\Omega)$ is twice differentiable in $x_0$ then
\begin{equation}\label{coherence}  
	\mbox{$u$ is $\Sigma_k$-subharmonic at $x_0 \ \ \Leftrightarrow \ \ D^2u(x_0) \in \Sigma_k$;}
\end{equation}
	that is, the classical and viscosity notions of $k$-convexity coincide at points of twice differentiability. We notice that the reverse implication $(\Leftarrow)$ depends on the positivity property \eqref{P} of $\Sigma_k$. 
\end{rem}

We now turn to the definition of $\Sigma_k$-admissible viscosity subsolutions and supersolutions for the type of equations involving $k$-Hessian operators $S_k$ that we will treat. The definitions make sense for any constant coefficient pure second order subequation. The main point is to indicate the role of the $k$-convexity constraint $\Sigma_k$ which insures the positivity property for $S_k$, which corresponds to the {\em degenerate ellipticity} of $S_k$ when the Hessian is constrained to $\Sigma_k$. 

\begin{defn}\label{defn:sub_super}
	Let $\Omega \subset \R^N$ be an open set and consider the equation
\begin{equation}\label{Sk_equation}
	S_k(D^2 u) - f(x, u, Du) = 0.
\end{equation} 
	\begin{itemize}
		\item[(a)] A function $u \in \USC(\Omega)$ is said to be a {\em $\Sigma_k$-admissible subsolution of \eqref{Sk_equation} at $x_0 \in \Omega$} if for every $\varphi$ which is $C^2$ near $x_0$
		\begin{align}\label{Sk_subsolution}
		u - \varphi \ \text{has a local maximum in} \ x_0 \ \ \Rightarrow  \ \ S_k(D^2 \varphi(x_0)) - & f(x_0, u(x_0), D\varphi(x_0)) \geq 0  \nonumber \\ & \text{and} \ D^2\varphi(x_0) \in \Sigma_k.
		\end{align}
		\item[(b)] A function $u \in \LSC(\Omega)$ is said to be a {\em $\Sigma_k$-admissible supersolution of \eqref{Sk_equation} at $x_0 \in \Omega$} if for every $\varphi$ which is $C^2$ near $x_0$
		\begin{align}\label{Sk_supersolution}
		u - \varphi \ \text{has a local minimum in} \ x_0 \ \ \Rightarrow  \ \ S_k(D^2 \varphi(x_0)) - & f(x_0, u(x_0), D\varphi(x_0)) \leq 0  \nonumber \\ & \text{or} \ D^2\varphi(x_0) \notin \Sigma_k.
		\end{align}
		\item[(c)] A function $u \in C(\Omega)$ is said to be a {\em $\Sigma_k$-admissible solution of \eqref{Sk_equation} at $x_0$} if both \eqref{Sk_subsolution} and \eqref{Sk_supersolution} hold.
	\end{itemize}
	One says that $u$ is a {\em $\Sigma_k$ admissible (sub-, super-) solution on $\Omega$} if the corresponding statement holds for each $x_0 \in \Omega$. 
\end{defn}

A fundamental example involves $f \equiv 0$.

\begin{exe}\label{exe:fundamental}[$k$-convex and co-$k$-convex functions] By \eqref{Sk_subsolution}, a function $u \in \USC(\Omega)$ is a $\Sigma_k$-admissible subsolution  of
\begin{equation}\label{k_convex_eq}
		S_k(D^2 u) = 0 \ \ \text{in} \ \Omega
\end{equation}
precisely when $u$ is $k$-convex in $\Omega$ (which is equivalent to $u$ being $\Sigma_k$-subharmonic in $\Omega$). On the other hand, $\Sigma_k$-admissible supersolutions of \eqref{k_convex_eq} can be stated in terms of the {\em Dirichet dual} of Harvey-Lawson \cite{HL09}
\begin{equation}\label{Dirichlet_dual}
	\widetilde{\Sigma}_k := - \left( \Sigma^{\circ}_k \right)^c = \left( - \Sigma^{\circ}_k \right)^c,
\end{equation}
where $\widetilde{\Sigma}_k$ is also a constant coefficient pure second order subequation. Using \eqref{Sk_supersolution} and \eqref{Dirichlet_dual}, one can show that $u \in \LSC(\Omega)$ is a $\Sigma_k$-admissible supersolution of \eqref{k_convex_eq} if and only if
\begin{equation}\label{co-k-convex}
	\mbox{ $-u \in \USC(\Omega)$ is $\widetilde{\Sigma}_k$-subharmonic in $\Omega$.}
\end{equation}
One says that $u$ is {\em $\Sigma_k$-superharmonic in $\Omega$} and that $v:= -u$ is a {\em co-$k$-convex} function in $\Omega$.  This claim follows from the Correspondence Principle in Theorem 10.14 of \cite{CHLP19} which in our pure second order situation requires three hypotheses. The first hypothesis is that  $(S_k, \Sigma_k)$ is a {\em compatible operator-subequation pair}; that is, $S_k \in C(\Sigma_k)$ with
\begin{equation}\label{compatible_pair}
	0 = c_0 := \inf_{\Sigma_k} S_k \ \ \text{is finite} \ \ \text{and} \ \ \partial \Sigma_k = \{A \in \Sigma_k: S_k(A) = 0\},
\end{equation}
which follow from the definitions of $S_k$ and $\Sigma_k$. The second hypothesis is that the pair is $\mathcal{M}$-monotone for some convex cone subequation $\mathcal{M}$, which is true for $\mathcal{M} = \mathcal{P}$ in this case. Third hypothesis is that $S_k$ is {\em tolpologically tame} which means that $\{A \in \Sigma_k: S_k(A) = 0\}$ has non-empty interior, which follows from the strict monotonicity of $S_k$ in the interior of $\Sigma_k$.
\end{exe}

A few additional remarks about Definition \ref{defn:sub_super} are in order. First we note that, of course,  there are various equivalent formulations in terms of different spaces of (upper, lower) test functions $\varphi$ for $u$ in $x_0$ in the spirit of Remark \ref{rem:k-convex}. 

\begin{rem}\label{rem:Sk_subsuper} Concerning the $\Sigma_k$-admissibility, notice that:
	\begin{itemize}
		\item [(a)] the part $D^2 \varphi(x_0) \in \Sigma_k$ of the subsolution condition \eqref{Sk_subsolution} is precisely \eqref{k_convex} so that $u$ is automatically $k$-convex in $x_0$;
	\item[(b)] the supersolution condition \eqref{Sk_supersolution} can be rephrased as
	\begin{align}\label{Sk_supersolution2}
	u - \varphi \ \text{has a local minimum in} \ x_0 \ \ \Rightarrow  \ \ S_k(D^2 \varphi(x_0)) - & f(x_0, u(x_0), D\varphi(x_0)) \leq 0  \nonumber \\ & \text{if} \ D^2\varphi(x_0) \in \Sigma_k;
	\end{align}
	that is, it is enough to use lower test functions which are $k$-convex in $x_0$.
\end{itemize}\end{rem}
	
The admissible supersolution definition takes its inspiration from Krylov \cite{Kv95} and was developed in \cite{CP17} for equations of the form $F(x,D^2u) = 0$. In the convex Monge-Amp\`ere case $k=N$ of \eqref{Sk_equation}, an analogous definition was given by Ishii-Lions \cite{IL90}. One good way to understand the supersolution definition \eqref{Sk_supersolution} (or \eqref{Sk_supersolution2}) was pointed out in the convex case in \cite{IL90} and concerns the following coherence property.

\begin{rem}\label{rem:Sk_coherence}
	Suppose that $u \in C^2(\Omega)$ is a classical supersolution in $\Omega$; that is,
\begin{equation}\label{Sk_classical_super}
	S_k(D^2u(x)) + f(x,u(x),Du(x)) \leq 0, \ \ x \in \Omega.
\end{equation}
	If $\varphi \in C^2(\Omega)$ is a lower test function for $u$ in $x_0$ ($u - \varphi$ has local minimum in $x_0$), while one has $D^2 u(x_0) \geq D^2 \varphi(x_0)$ from elementary calculus, one cannot use this to deduce
	$$
	S_k(D^2\varphi(x_0)) + f(x,u(x_0),Du(x_0)) \leq 0
	$$
	\underline{unless} $D^2 \varphi(x_0) \in \Sigma_k$.
\end{rem}	

As a final remark, we note that our main focus will be for the equation
	\begin{equation}\label{Sk_EVequation}
S_k(D^2 u) + \lambda u |u|^{k-1} = f(x).
\end{equation} 
where $k \in \{1, \ldots, N\}$ and $\lambda \in \R$ is a spectral parameter, which will be positive in the interesting cases and associated to \eqref{Sk_EVequation} we will often have a homogeneous Dirichlet condition on $\partial \Omega$.  We will have cause to consider negative and $k$-convex subsolutions to \eqref{Sk_EVequation} as well as non negative supersolutions. Obviously, this means using Definition \ref{defn:sub_super} with $f(x,u,Du) = \lambda u|u|^{k-1}$ where the positivity of $\lambda$ and negativity of $r$ is compatible with the $\Sigma_k$ convexity of (sub)solutions $u$.

\subsection{$(k-1)$-convex domains}

In order to construct suitable barriers for $k$-Hessian operators, we will exploit a suitable notion of strict boundary convexity which is stated in terms of the positivity of the relevant elementary symmetric function of the principal curvatures. More precisely, given $\Omega \subset \R^N$ a bounded domain with $\partial \Omega \in C^2$, we denote by 
\begin{equation}\label{princ_curvs}
	(\kappa_1(y), \ldots , \kappa_{N-1}(y)) \ \ \text{with} \ y \in \partial \Omega
\end{equation}
the {\em principal curvatures} (relative to the inner unit normal $\nu(y)$) which are defined as the eigenvalues of the self-adjoint {\em shape operator} $S$ on the tangent space $T(y)$ defined by 
\begin{equation}\label{shape_operator}
S(X) := - D_X \nu, \ \ X \in T(y).
\end{equation}
 If the boundary is represented locally near a fixed point $y_0 \in \partial \Omega$ as the graph of a suitable function $\phi$, the principle curvatures $\kappa_i(y_0)$ are the eigenvalues of the Hessian of $\phi$ at the relevant point. This will be recalled in the next subsection (as will special coordinate systems well adapted for calculations near the boundary).
 
 The needed concept of convexity is the following notion\footnote{ This is known {\em uniform $(k-1)$-convexity} as in the works of Trudinger beginning with \cite{Tr95}.}  

\begin{defn}\label{defn:k-1_convex_bdy} Let $k \in \{2, \ldots, N\}$.
	$\Omega \subset \R^N$  with $\partial \Omega \in C^2$ is said to be {\em strictly $(k-1)$-convex} if \footnote{ Here and below, we will use the same symbol $\sigma_j$ for the jth-elementary symmetric function on $\R^{N-1}$ and $\R^{N}$.}
	\begin{equation}\label{k-1_convex_bdy}
	\sigma_j(\kappa_1(y), \ldots , \kappa_{N-1}(y)) > 0 \ \ \text{for each} \ y \in \partial \Omega \ \ \text{and each} \ j = 1, \ldots k - 1;
	\end{equation}
	that is, for each $j = 1, \ldots, k - 1$, each {\em jth-mean curvature} is everywhere strictly positive on $\partial \Omega$.  
\end{defn} 
Notice that strict $(N-1)$-convexity is ordinary strict convexity of $\partial \Omega$. One importance of this convexity is that it ensures the existence of functions which are $C^2$, vanish on the boundary and strictly $k$-convex near the boundary. This fact will be used in Proposition \ref{prop:be2} below and depends in part on the following fact.

\begin{lem}\label{lem:k-1_convex_bdy}
	If $\Omega \subset \R^N$ is a bounded strictly $(k-1)$-convex domain with $\partial \Omega \in C^2$, then there exists $R > 0$ such that
	\begin{equation}\label{kCB1}
	\sigma_{j}(\kappa_1(y), \ldots , \kappa_{N-1}(y), R) > 0 \ \ \text{for each} \ y \in \partial \Omega \ \ \text{and each} \ j = 1, \ldots k;
	\end{equation}
	that is, 
	\begin{equation}\label{kCB2}
	(\kappa_1(y), \ldots , \kappa_{N-1}(y), R) \in \Gamma_k \ \ \text{for each} \ y \in \partial \Omega.
	\end{equation}
\end{lem}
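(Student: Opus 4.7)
The plan is to exploit the Pascal-type recursion for the elementary symmetric functions to reduce \eqref{kCB1} to the hypothesis of strict $(k-1)$-convexity together with a uniform lower bound coming from the compactness of $\partial\Omega$. Concretely, for any $\lambda=(\lambda_1,\dots,\lambda_{N-1})\in\R^{N-1}$ and any $R\in\R$ one has the identity
\begin{equation*}
\sigma_j(\lambda_1,\dots,\lambda_{N-1},R)=\sigma_j(\lambda_1,\dots,\lambda_{N-1})+R\,\sigma_{j-1}(\lambda_1,\dots,\lambda_{N-1}),\qquad j=1,\dots,N,
\end{equation*}
with the convention $\sigma_0\equiv 1$, obtained by separating those index subsets which do or do not contain $N$. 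Applying this with $\lambda=\kappa(y)=(\kappa_1(y),\dots,\kappa_{N-1}(y))$ reduces the claim to an analysis of the right-hand side for each $j=1,\dots,k$.

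For $j=1,\dots,k-1$ both terms on the right are strictly positive: the term $\sigma_j(\kappa(y))$ by the definition of strict $(k-1)$-convexity in Definition \ref{defn:k-1_convex_bdy}, and the term $R\,\sigma_{j-1}(\kappa(y))$ because $\sigma_{j-1}(\kappa(y))>0$ (either by the same hypothesis if $j\geq 2$, or trivially because $\sigma_0=1$), provided $R>0$. Hence for every $R>0$ the inequalities \eqref{kCB1} hold automatically for $j=1,\dots,k-1$.

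The only genuine condition is on $j=k$, where
\begin{equation*}
\sigma_k(\kappa(y),R)=\sigma_k(\kappa(y))+R\,\sigma_{k-1}(\kappa(y)),
\end{equation*}
and $\sigma_k(\kappa(y))$ may have either sign (or vanish). Here I would use the compactness of $\partial\Omega$ and the continuity of $y\mapsto\kappa(y)$ (which follows from $\partial\Omega\in C^2$, since the principal curvatures are the eigenvalues of the shape operator \eqref{shape_operator}, which depends continuously on $y$). Setting
\begin{equation*}
m:=\min_{y\in\partial\Omega}\sigma_{k-1}(\kappa(y))>0,\qquad M:=\max_{y\in\partial\Omega}|\sigma_k(\kappa(y))|<\infty,
\end{equation*}
the strict $(k-1)$-convexity assumption ensures $m>0$. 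Choosing any $R>M/m$ (and in particular $R>0$) yields $R\,\sigma_{k-1}(\kappa(y))>M\geq -\sigma_k(\kappa(y))$ uniformly in $y\in\partial\Omega$, which is the required inequality for $j=k$. Combined with the previous paragraph, this gives \eqref{kCB1} for all $j=1,\dots,k$, and \eqref{kCB2} then follows from the defining description \eqref{gamma_k} of $\Gamma_k$.

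No essential obstacle arises: the argument is a short recursion plus a compactness step. The only mild subtlety is to note that the case $j=k-1$ in the hypothesis is exactly what supplies the crucial positive lower bound $m>0$ used to absorb the potentially negative term $\sigma_k(\kappa(y))$ when $j=k$; this is precisely why the lemma asks for strict $(k-1)$-convexity rather than merely $k$-convexity.
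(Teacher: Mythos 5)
Your proof is correct and follows essentially the same route as the paper's: both use the recursion $\sigma_j(\kappa,R)=\sigma_j(\kappa)+R\,\sigma_{j-1}(\kappa)$ to dispose of $j<k$ immediately and then handle $j=k$ by compactness of $\partial\Omega$ and continuity of the principal curvatures. Your version merely makes the compactness step slightly more explicit by naming the bounds $m$ and $M$ and giving the resulting threshold $R>M/m$.
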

\begin{proof}
	With the convention that $\sigma_0(\cdot) = 1$, one has the elementary identity 
\begin{equation}\label{sigma1}
	\sigma_j(\kappa_1, \ldots, \kappa_{N-1}, R) = R \sigma_{j-1}(\kappa_1, \ldots, \kappa_{N-1}) + \sigma_j(\kappa_1, \ldots, \kappa_{N-1}), \ \ j = 1, \ldots, k.
\end{equation}
If $j < k$, then the left hand member is positive on $\partial \Omega$ for any $R \geq 0$ by the convexity assumption \eqref{k-1_convex_bdy}. When $j = k$ and $R > 0$, the first term on the right hand side of \eqref{sigma1} is positive by \eqref{k-1_convex_bdy} and both terms are continuous functions on $\partial \Omega$ which is compact, which gives the claim \eqref{kCB1} for $j = k$ if $R$ is large enough.
\end{proof}

We note that if $\partial \Omega$ is {\em connected} then the conclusion \eqref{kCB1} holds under the weaker convexity assumption 
	\begin{equation}\label{k-1_convex_bdy2}
\sigma_k(\kappa_1, \ldots , \kappa_{N-1}) > 0 \ \ \text{on} \  \partial \Omega.
\end{equation}
See Remark 1.2 of \cite{CNS85} for a proof of this fact, which also makes use of \eqref{sigma1}.

As a final consideration, we make a comparison with the natural notion of strict $\EC{\Sigma}_k$-convexity, as defined in section 5 of Harvey-Lawson \cite{HL09}. This notion is defined in terms of an {\em elliptic cone} $\EC{\Sigma}$ which is an elliptic subset of $\Ss(N)$ (as defined in Lemma \eqref{lem:S_Theta_k} (a)) which is also a pointed cone in the sense that
$$
	A \in \EC{\Sigma} \ \ \Leftrightarrow \ \ t A \in \EC{\Sigma} \ \ \text{for each} \ t \geq 0.
$$
Given an elliptic set $\Sigma$ there is an {\em associated elliptic cone} $\EC{\Sigma}$ which can be defined as the closure of the set 
$$
	\{ A \in \Ss(N): \ \exists t_0 > 0 \ \ \text{with} \ tA \in \Sigma \ \text{for each} \ t \geq t_0 \}
$$
It is easy to see that if $\Sigma$ is an elliptic cone, then $\EC{\Sigma} = \Sigma$.

One says that $\partial \Omega$ is strictly $\EC{\Sigma}$-convex at $x \in \partial \Omega$ if there exists a local defining function $\rho$ for the boundary near $x$ such that \footnote{ More precisely, $\rho \in C^2(B_r(x))$ for some $r > 0$ and $\Omega \cap B_r(x) = \{ y \in B_r(x): \ \rho(y) < 0 \}$ and $D \rho \neq 0$ on $B_r(x)$.}
\begin{equation}\label{HL_convex}
	D^2 \rho(x)_{|T_x \partial \Omega} = B_{|T_x \partial \Omega} \ \ \text{for some} \ B \in \EC{\Sigma}^{\circ},
\end{equation} 
which is to say that $\rho$ is strictly $\EC{\Sigma}$ convex near $x \in \partial \Omega$. In \cite{HL09}, it is shown that solvability of the Dirichlet problem on $\Omega$ for $\Sigma$-harmonic functions holds if $\partial \Omega$ is strictly $\EC{\Sigma}$ and $\EC{\widetilde{\Sigma}}$ convex where $\widetilde{\Sigma} = - \left( \Sigma^{\circ} \right)^c$ is the {\em Dirichlet dual} of $\Sigma$ (as defined in \eqref{Dirichlet_dual}). 

\begin{prop}\label{prop:EC_convexity}
	For $\Omega \subset \R^N$ bounded with $\partial \Omega \in C^2$, one has
	\begin{equation}\label{HL_convex_equiv}
\partial \Omega \  \text{is strictly} \ (k-1)-\text{convex} \ \Longleftrightarrow \ \partial \Omega \ \text{is strictly}\ \EC{\Sigma}_k \ \text{and} \ \EC{\widetilde{\Sigma}} \ \text{convex}.
\end{equation}	
\end{prop}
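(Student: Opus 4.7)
The plan is to reduce the equivalence to a linear algebra statement. First, $\Sigma_k$ is a closed convex (hence elliptic) cone by Lemma \ref{lem:S_Theta_k}, and a parallel argument using $\widetilde{\Sigma}_k = (-\Sigma_k^\circ)^c$ shows $\widetilde{\Sigma}_k$ is a closed cone as well; hence $\EC{\Sigma}_k = \Sigma_k$ and $\EC{\widetilde{\Sigma}_k} = \widetilde{\Sigma}_k$. Fix $x \in \partial \Omega$, work in an orthonormal frame $\{e_1, \ldots, e_{N-1}, \nu(x)\}$ whose tangential part diagonalizes the shape operator, and take the signed distance function as the (normalized) defining function $\rho$, so that $A := D^2\rho(x)|_{T_x \partial \Omega} = \mathrm{diag}(\kappa_1(x), \ldots, \kappa_{N-1}(x))$. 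In these coordinates the Harvey--Lawson condition $D^2\rho(x)|_{T_x \partial \Omega} = B|_{T_x \partial \Omega}$ for some $B \in \Sigma_k^\circ$ becomes the existence of $b \in \R^{N-1}$ and $c \in \R$ with
\[
B = \left( \begin{array}{cc} A & b \\ b^T & c \end{array} \right) \in \Sigma_k^\circ,
\]
and analogously for $\widetilde{\Sigma}_k^\circ$.

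For the direction ``$\Rightarrow$'', strict $(k-1)$-convexity combined with Lemma \ref{lem:k-1_convex_bdy} furnishes $R > 0$ such that $(\kappa_1(x), \ldots, \kappa_{N-1}(x), R) \in \Gamma_k$; choosing $b = 0$ and $c = R$ gives strict $\Sigma_k$-convexity. For the $\widetilde{\Sigma}_k$-part, use $\widetilde{\Sigma}_k^\circ = (-\Sigma_k)^c$: taking instead $b = 0$ and $c = R'$ with $R'$ large enough makes $\sigma_1(-\lambda(B')) < 0$, so $-B' \notin \Sigma_1 \supset \Sigma_k$ by the chain \eqref{chain2}. Note this second construction uses no convexity hypothesis on $\partial \Omega$, so strict $\widetilde{\Sigma}_k$-convexity is automatic.

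For the direction ``$\Leftarrow$'', given $B \in \Sigma_k^\circ$ with tangential block $A$, sort the eigenvalues as $\lambda_1 \leq \cdots \leq \lambda_N$ for $B$ and $\mu_1 \leq \cdots \leq \mu_{N-1}$ for $A$. Since $A$ is a principal submatrix of $B$, Cauchy's interlacing theorem gives $\mu_i \geq \lambda_i$ for $i = 1, \ldots, N-1$. The key algebraic input, a standard consequence of G\aa{}rding's theory and Korevaar's characterization \eqref{sigma_k_korevaar}, is that
\[
\frac{\partial \sigma_j}{\partial \lambda_i}(\lambda) = \sigma_{j-1}(\lambda | \hat{\imath}) > 0 \quad \mbox{on} \ \Gamma_k, \ \ j = 1, \ldots, k, \ \ i = 1, \ldots, N,
\]
so removing any single coordinate from a vector in $\Gamma_k$ leaves one in $\Gamma_{k-1} \subset \R^{N-1}$; in particular $(\lambda_1, \ldots, \lambda_{N-1}) \in \Gamma_{k-1}$. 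To pass from this vector to the dominating $\mu := (\mu_1, \ldots, \mu_{N-1})$, one exploits the forward invariance of $\Gamma_{k-1}$ under non-negative coordinate perturbations: for $\nu \in \Gamma_{k-1}$, $t \geq 0$ and any index $i$, the linear identity $\sigma_l(\nu + t e_i) = \sigma_l(\nu) + t \sigma_{l-1}(\nu | \hat{\imath})$ together with the same derivative-positivity applied to $\nu$ yields $\sigma_l(\nu + t e_i) > 0$ for $l = 1, \ldots, k-1$. Iterating coordinate by coordinate produces $\mu \in \Gamma_{k-1}$, and by the symmetry of $\sigma_l$ this is precisely strict $(k-1)$-convexity at $x$.

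The main technical obstacle is the interplay between the choice of defining function $\rho$ and the freedom in the normal data $(b, c)$, resolved by noting that the tangential second fundamental form is the sole geometric invariant involved. The remaining steps assemble Cauchy interlacing with the nested structure of the G\aa{}rding cones $\Gamma_1 \supset \cdots \supset \Gamma_N$.
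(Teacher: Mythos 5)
Your proof is correct, and it is substantially more self-contained than the paper's. The paper's argument records the cone identifications $\EC{\Sigma}_k = \Sigma_k$, $\EC{\wt{\Sigma}}_k = \wt{\Sigma}_k$, uses the inclusion chain to get $\Sigma_k \subset \wt{\Sigma}_k$ (hence the two Harvey--Lawson conditions collapse to strict $\Sigma_k^\circ$-convexity alone), and then simply \emph{asserts} that strict $\Sigma_k^\circ$-convexity coincides with strict $(k-1)$-convexity by appeal to ``the general Harvey--Lawson theory''. You actually prove that crux. Your ``$\Rightarrow$'' direction, building $B = \mathrm{diag}(\kappa_1,\ldots,\kappa_{N-1},R)$ from Lemma \ref{lem:k-1_convex_bdy}, is essentially what the $R$-construction in the paper is good for. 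Your observation that strict $\wt{\Sigma}_k$-convexity holds unconditionally (by sending the normal entry $c \to +\infty$ so that $\sigma_1(\lambda(-B')) < 0$) is a slightly stronger statement than the paper's $\Sigma_k \subset \wt{\Sigma}_k$, but delivers the same simplification. The genuine added content is the ``$\Leftarrow$'' direction: Cauchy interlacing $\mu_i \geq \lambda_i$ between the tangential block $A$ and the full $B \in \Sigma_k^\circ$, the fact (via Korevaar's characterization \eqref{sigma_k_korevaar}) that deleting a coordinate maps $\Gamma_k(\R^N)$ into $\Gamma_{k-1}(\R^{N-1})$, and forward invariance of $\Gamma_{k-1}$ under non-negative coordinate increments via $\sigma_l(\nu + te_i) = \sigma_l(\nu) + t\,\sigma_{l-1}(\nu\,|\,\hat{\imath})$. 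This chain is complete and rigorous, and gives a reader a purely elementary route to the equivalence without having to chase down the corresponding lemma in \cite{HL09}. One small point of care you handle correctly: the tangential Hessian $D^2\rho|_{T_x\partial\Omega}$ scales by a positive factor under change of defining function, and since $\Sigma_k^\circ$ is a cone this causes no ambiguity; your choice of the signed-distance normalization (with $\Omega = \{\rho < 0\}$ and the paper's shape-operator sign convention) does give $D^2\rho|_{T_x\partial\Omega} = \mathrm{diag}(\kappa_1,\ldots,\kappa_{N-1})$, consistent with \eqref{GT_hess_d}.
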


\begin{proof}
	Since $\Sigma_k$ and $\wt{\Sigma}_k$ are elliptic sets and pointed cones, they are themselves elliptic cones and hence
	\begin{equation}\label{BCE1}
	\Sigma_k = \EC{\Sigma}_k \quad \text{and} \quad \wt{\Sigma}_k = \EC{\wt{\Sigma}}_k.
	\end{equation}
	From \eqref{chains} and \eqref{Theta_1_N} one has
	for each $k \in \{1, \ldots, N \}$
	\begin{equation}\label{BCE2}
	\cP \subset \Sigma_N \subset \Sigma_k \subset \Sigma_1 = \cH
	\end{equation}
	and by the definition of the dual one also has
		\begin{equation}\label{BCE3}
	\cH = \wt{\cH} \subset \wt{\Sigma}_1 \subset \wt{\Sigma}_k \subset \wt{\Sigma}_N = \wt{\cP}
	\end{equation}
	and hence
			\begin{equation}\label{BCE4}
	  \Sigma_k \subset \wt{\Sigma}_k \ \ \text{for each} \ k \in \{1, \ldots, N \}.
	\end{equation}
From \eqref{BCE1} and \eqref{BCE4} one has that $\EC{\Sigma}_k \subset \EC{\widetilde{\Sigma}}_k$ and hence strict $(k-1)$-convexity is precisely what the general Harvey-Lawson theory requires since $\EC{\Sigma}_k^{\circ} = \Sigma_k^{\circ}$.
\end{proof} 

\subsection{Principal coordinate systems and the distance function.}

Consider $\Omega \subset \R^N$ a bounded domain with $C^2$ boundary with principal curvatures $\{ \kappa_i(y)\}_{i=1}^{N-1}$, unit inner normal $\nu(y)$ and tangent space $T(y)$ at each $y \in \partial \Omega$. Denote the distance function to the boundary by
\begin{equation}\label{distance}
	d(x):= {\rm dist}(x, \partial \Omega), \ \ x \in \R^N.
\end{equation}
Following section 14.6 of \cite{GT}, will recall some known facts concerning the calculation of $\kappa_i(y_0)$ at a fixed boundary point $y_0$ and the notion of a principal coordinate system near $y_0$ which yields nice formula for the Hessian of $d$ in suitable tubular neighborhoods of the boundary.

With $y_0 \in \partial \Omega$ fixed, choose coordinates $x = (x',x_N) \in \R^{N-1} \times \R = \R^N$ such that the inner unit normal is $\nu(y_0) = (0,1)$. Then there exists an open neighborhood $\mathcal{N}_0$ of $y_0$ and a function 
 \begin{equation}\label{defining_function}
 \mbox{$\phi: \mathcal{N}_0 \cap T(y_0) \to \R$ of class $C^2$ with $ D \phi(y_0^{\prime}) = 0$}
 \end{equation}
so that  
\begin{equation}\label{local_graph}
	\partial \Omega \cap \mathcal{N}(y_0) = \{(x',\phi(x'): \ \ x' \in \mathcal{N}_0 \cap T(y_0) \}
\end{equation} 
and 
\begin{equation}\label{princ_curvs2}
\mbox{the principal curvatures $\{ \kappa_i(y_0)\}_{i=1}^{N-1}$ \ \ are the eigenvalues of \ $D^2 \phi(y_0^{\prime})$}.
\end{equation} 
In a {\em principal coordinate system at $y_0$}, where one takes the axes $x_1, \ldots, x_{n-1}$ along the associated eigenvectors for $D^2 \phi(y_0^{\prime})$, one has
\begin{equation}\label{princ_curvs3}
D^2 \phi(y_0^{\prime}) = {\rm diag} \left[ \kappa_1(y_0), \ldots , \kappa_{N-1}(y_0) \right].
	\end{equation}  
	
The following properties of bounded $C^2$ domains are well known and will be used repeatedly in the sequel. For the proofs, see Lemma 14.16 and Lemma 14.17 of \cite{GT}. 

\begin{lem}\label{lem:good_tubes}
	Let $\Omega \subset \R^N$ be a bounded domain with $C^2$ boundary. Then there exists $\delta > 0$ such that:
	\begin{itemize}
		\item [(a)] $\partial \Omega$ satisfies a uniform interior (and uniform exterior) sphere condition with balls of radius bounded below by $\delta$ so that the principal curvatures satisfy for each $i \in \{1, \ldots, N-1 \}$
		\begin{equation}\label{GT_curv}
		|\kappa_i(y)| \leq \frac{1}{\delta} \ \ \text{for each} \ y \in \partial \Omega;
		\end{equation}
		\item[(b)] the distance function $d(\cdot):= {\rm dist}(\cdot, \partial \Omega)$ satisfies
			\begin{equation}\label{GT_C2}
		d \in C^2(\overline{\Omega}_{\delta})
		\end{equation}
		and 
		\begin{equation}\label{GT_nablad}
		\mbox{ $|D d(x)|= 1$ for each $x \in \Omega_{\delta}$, }
		\end{equation}
		where
			\begin{equation}\label{Omega_delta}
		\Omega_{\delta} := \{x \in \Omega,\ 0 < d(x) < \delta \};
		\end{equation}
		\item[(c)] for each $x \in \Omega_{\delta}$
		\begin{equation}\label{GT_min_dist}
		\mbox{there exists a unique $y = y(x) \in \partial \Omega$ such that $d(x) = |x - y|$;}
		\end{equation}
		\item[(d)] from \eqref{GT_nablad} one has that $D^2d(x_0)$ has a zero eigenvalue associated to the eigenvector $D d(x_0)$ for each $x_0 \in \Omega_{\delta}$ and using a principal coordinate system based at the point $y_0 = y(x_0)$, which realizes the distance from $x_0$ to the boundary, one has $D d(x_0) = (0, \ldots, 0, 1)$ and
		\begin{equation}\label{GT_hess_d}
		D^2d(x_0) = {\rm diag} \left[ \frac{-\kappa_1}{1 - \kappa_1 d}, \ldots, \frac{-\kappa_{N-1}}{1 - \kappa_{N-1}d}, 0 \right],
		\end{equation} 
		where $\kappa_i = \kappa_i(y_0)$, $d = d(x_0)$ and $1 - \kappa_i d > 0$ since $d < \delta$ and $\kappa_i$ satisfies \eqref{GT_curv}.
	\end{itemize} 
	\end{lem}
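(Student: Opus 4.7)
All four conclusions are classical regularity facts for $C^2$ domains, and the standard reference is \cite{GT}, as indicated. The plan is to assemble them from three ingredients: (i) compactness of $\partial\Omega$ plus local graph representations, (ii) an inverse-function-theorem argument for the normal parametrization, and (iii) an explicit computation in a principal coordinate system.

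First I would establish (a). Cover $\partial\Omega$ by finitely many open sets $\mathcal{N}_1, \dots, \mathcal{N}_m$ in each of which $\partial\Omega$ is the graph of a $C^2$ function $\phi_\ell$ as in \eqref{defining_function}--\eqref{local_graph}; by shrinking these sets and using compactness of $\partial\Omega$, the $C^2$ norms $\|\phi_\ell\|_{C^2}$ are uniformly bounded. Since the principal curvatures at a point $y_0$ are the eigenvalues of $D^2\phi_\ell(y_0')$ (cf.\ \eqref{princ_curvs2}), they are uniformly bounded, which gives \eqref{GT_curv} for some $\delta_1 > 0$. The uniform interior/exterior sphere condition follows by an osculating ball argument: at each $y \in \partial\Omega$, a ball of radius $r < 1/\max_i |\kappa_i(y)|$ centered on the inward (resp.\ outward) normal line is locally tangent to $\partial\Omega$ at $y$, and by the $C^2$ estimate this tangency extends to a genuine one-sided contact for $r$ smaller than some uniform $\delta_2 > 0$.

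Next I would address (b) and (c) together via the normal parametrization
\[
\Psi : \partial\Omega \times (-\delta,\delta) \to \R^N, \qquad \Psi(y,t) = y + t\,\nu(y).
\]
Since $\partial\Omega \in C^2$, the inward normal $\nu$ is $C^1$, so $\Psi$ is $C^1$. At $(y,0)$, the differential of $\Psi$ acts as the identity on the normal factor and as $I - t\,S(y) = I$ on the tangent factor, hence is invertible. Taking $\delta$ smaller than $\min(\delta_1,\delta_2)$ from step one, the bound \eqref{GT_curv} makes $I - tS(y)$ invertible for $|t|<\delta$, and a standard inverse-function-plus-compactness argument shows that $\Psi$ is a $C^1$ diffeomorphism of $\partial\Omega \times (-\delta,\delta)$ onto a tubular neighborhood of $\partial\Omega$ whose intersection with $\Omega$ is exactly $\Omega_\delta$. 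Injectivity of $\Psi$ combined with the uniform interior sphere condition yields, for each $x \in \Omega_\delta$, a \emph{unique} point $y(x) \in \partial\Omega$ with $d(x) = |x - y(x)|$, proving (c). Writing $\Psi^{-1}(x) = (y(x), d(x))$ we see that $d$ is $C^1$ with $Dd(x) = \nu(y(x))$; since $y(\cdot)$ is itself $C^1$ and $\nu$ is $C^1$, we conclude $d \in C^2(\overline{\Omega}_\delta)$ and $|Dd| \equiv 1$ on $\Omega_\delta$, which is (b).

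Finally, for (d), fix $x_0 \in \Omega_\delta$, let $y_0 = y(x_0)$ and choose a principal coordinate system at $y_0$ as in \eqref{princ_curvs3}, so that $\nu(y_0) = e_N$ and the shape operator at $y_0$ is ${\rm diag}[\kappa_1,\ldots,\kappa_{N-1}]$. Then $x_0 = y_0 + d(x_0)\,e_N$, and $Dd(x_0) = \nu(y_0) = e_N$ gives the zero eigenvalue along the $e_N$ direction. For the tangent directions, I would differentiate the identity $x = y(x) + d(x)\,\nu(y(x))$ to get, at $x_0$ and in matrix form restricted to the tangent space,
\[
I = (I - d(x_0)\,S(y_0))\,Dy(x_0) + \nu(y_0)\otimes Dd(x_0)_{\text{tan}},
\]
whose tangential part yields $Dy(x_0)_{\text{tan}} = (I - d(x_0)\,S(y_0))^{-1}$. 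Combining with $Dd = \nu \circ y$ and differentiating once more gives the tangent block of $D^2 d(x_0)$ as $-S(y_0)\,(I - d(x_0)\,S(y_0))^{-1}$, which in the chosen basis is diagonal with entries $-\kappa_i/(1 - d(x_0)\kappa_i)$, yielding \eqref{GT_hess_d}. Positivity of $1 - \kappa_i d$ follows since $d < \delta \leq 1/\max_i|\kappa_i|$.

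The main technical point is the diffeomorphism argument in step two, because it is where the uniform sphere condition and the uniform curvature bound must be synchronized to produce a single $\delta$ working at every boundary point; once this is in hand, the Hessian formula in (d) reduces to a direct computation that has no further geometric input.
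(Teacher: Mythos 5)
The paper itself offers no proof of this lemma: it simply cites Lemmas 14.16 and 14.17 of \cite{GT}. Your proposal reconstructs exactly the classical argument found there, and it is correct; it is in this sense "the same approach." In particular, the three ingredients you identify --- uniform $C^2$ control of the local graph representations via compactness (giving \eqref{GT_curv} and the uniform sphere condition), the normal parametrization $\Psi(y,t)=y+t\nu(y)$ together with the inverse function theorem (giving the unique foot point $y(x)$, the identity $Dd=\nu\circ y$, and hence $d\in C^2$ by bootstrap), and differentiation of $x = y(x) + d(x)\nu(y(x))$ in a principal coordinate system (giving the Hessian formula) --- are precisely the steps in the standard proof.

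Two places deserve a word of caution if you flesh this out. First, the osculating-ball step in (a) should be made quantitative: it is not mere tangency that matters but the explicit one-sided estimate
\begin{equation*}
r - \sqrt{r^2 - |x'|^2} \;\geq\; \frac{|x'|^2}{2r} \;>\; \frac{K}{2}|x'|^2 \;\geq\; |\phi(x')| \qquad \text{whenever}\ r < 1/K,
\end{equation*}
valid on the (uniformly sized) neighborhood where the graph representation $x_N = \phi(x')$ with $\|D^2\phi\|\leq K$ holds; this is the inequality that forces the ball inside $\Omega$. Second, note that $\Psi$ as you define it is only $C^1$ (since $\nu$ is only $C^1$), so the inverse function theorem directly gives $d\in C^1$ and $y(\cdot)\in C^1$; the upgrade to $d\in C^2$ is the observation you make at the end of that paragraph, that $Dd = \nu\circ y$ is a composition of $C^1$ maps. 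This is correct, but it is worth stating as a separate step rather than folding into the diffeomorphism sentence, since it is precisely the point where $C^2$ regularity of $d$ is gained despite $\Psi^{-1}$ being only $C^1$.
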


\subsection{Computing $S_k$ in special coordinate systems}

Managing $S_k$ is facilitated by using the principal coordinate systems near the boundary discussed above. Also radial functions are often handy for comparison arguments used in Hopf-type boundary estimates and H\"older regularity arguments, as we will see. In this subsection, we record two lemmas for future use.

\begin{lem}\label{lem:PCS}
	Let $\Omega \subset \R^N$ be a bounded domain with $C^2$ boundary. For any $g \in C^2(\Omega)$ and any $x_0 \in \Omega_{\delta}$ one has the following formula for the composition $v = g \circ d$ and for each $j = 1, \ldots, N$
	\begin{equation}\label{S_k_composition}
	S_j(D^2v(x_0)) = \sigma_j \left( \frac{-\kappa_1 }{1 - \kappa_1 d} g^{\prime}(d), \ldots, \frac{-\kappa_{N-1}}{1 - \kappa_{N-1}d} g^{\prime}(d), g^{\prime\prime}(d) \right),
	\end{equation}
	where again $\kappa_i = \kappa_i(y_0)$ and $d = d(x_0)$ in a principal coordinate system based at $y_0 \in \partial \Omega$ which realizes the distance to $x_0 \in \Omega_{\delta}$ as in Lemma \ref{lem:good_tubes}.
	\end{lem}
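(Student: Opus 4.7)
The plan is a direct calculation exploiting the principal coordinate system at the nearest boundary point $y_0 = y(x_0) \in \partial \Omega$. First I would apply the chain rule to $v = g \circ d$ to obtain $Dv(x) = g'(d(x)) Dd(x)$ and then, by the product rule,
\[
D^2 v(x) = g''(d(x))\, Dd(x) \otimes Dd(x) + g'(d(x))\, D^2 d(x).
\]
Evaluating at $x_0$ and using Lemma \ref{lem:good_tubes}(d), in the chosen principal coordinate system one has $Dd(x_0) = (0, \ldots, 0, 1)$, so the rank-one matrix $Dd(x_0) \otimes Dd(x_0)$ has a single nonzero entry, equal to $1$, in the $(N,N)$ position. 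Meanwhile, by \eqref{GT_hess_d}, $D^2 d(x_0)$ is diagonal with entries $\frac{-\kappa_i}{1 - \kappa_i d}$ for $i = 1, \ldots, N-1$ and $0$ in the last slot.

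Adding these two contributions, I find that $D^2 v(x_0)$ is diagonal in the principal coordinate system, with diagonal entries
\[
\frac{-\kappa_1 g'(d)}{1 - \kappa_1 d}, \ \ldots, \ \frac{-\kappa_{N-1} g'(d)}{1 - \kappa_{N-1} d}, \ g''(d).
\]
Since the eigenvalues of a diagonal matrix are precisely its diagonal entries, and $S_j(A) = \sigma_j(\lambda(A))$ depends only on the eigenvalues of $A$, the claimed formula \eqref{S_k_composition} follows immediately. The denominators $1 - \kappa_i d$ are strictly positive on $\Omega_{\delta}$ thanks to the curvature bound $|\kappa_i| \leq 1/\delta$ of Lemma \ref{lem:good_tubes}(a), so every expression in \eqref{S_k_composition} is well defined.

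There is no genuine obstacle here; the statement is a bookkeeping result that packages together the chain rule, the product rule, and the diagonal form of $D^2 d$ in a principal frame. The only subtlety worth flagging is that the \emph{same} principal frame at $y_0$ must be used for both $D^2 d(x_0)$ and $Dd(x_0) \otimes Dd(x_0)$ when writing $D^2 v(x_0)$ as a diagonal matrix; this is automatic once one fixes the coordinates as in Lemma \ref{lem:good_tubes}(d), since the last coordinate axis is aligned with $\nu(y_0) = Dd(x_0)$ and is therefore an eigenvector of both summands.
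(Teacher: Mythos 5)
Your argument is correct and is essentially identical to the paper's proof: both compute $D^2 v = g'(d) D^2 d + g''(d)\, Dd \otimes Dd$, evaluate in the principal coordinate frame at $y_0 = y(x_0)$ using Lemma \ref{lem:good_tubes}(d), read off the eigenvalues from the resulting diagonal matrix, and apply the definition of $S_j$. Your added remarks (that the frame simultaneously diagonalizes both summands because $Dd(x_0)=\nu(y_0)$ is the last coordinate axis, and that $1-\kappa_i d>0$ on $\Omega_\delta$) are accurate and simply make explicit what the paper leaves implicit.
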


\begin{proof}
For $g \in C^2$ the Hessian of the composition $v = g \circ d$ in $\Omega_{\delta}$ is given by
\begin{equation}\label{dist}
D^2v = g^\prime(d)D^2d + g^{\prime\prime}(d) D d\otimes D d
\end{equation}
which has eigenvalues $\lambda_N(D^2v)=g^{\prime\prime}(d)$ and $\lambda_i(D^2v)=g^\prime(d)e_i(d)$ where $\{e_i(d)\}_{i=1}^{N-1}$ are the first $N-1$ eigenvalues of $D^2 d$ whose expression at $x_0 \in \Omega_{\delta}$ in a principal coordinate system based at $y_0 = y(x_0)$ is given by \eqref{GT_hess_d} and hence
		\begin{equation}\label{GT_hess_god}
D^2v(x_0) = {\rm diag} \left[ \frac{-\kappa_1}{1 - \kappa_1 d} g'(d), \ldots, \frac{-\kappa_{N-1}}{1 - \kappa_{N-1}d} g'(d), g^{\prime \prime}(d) \right],
\end{equation} 
from which \eqref{S_k_composition} follows by the definition of $S_j$.
\end{proof}

\begin{lem}\label{lem:radial}
For radial functions $w(x)=h(|x - x_0|)$ with $h\in C^2$, the eigenvalues of $D^2w$ in any punctured neighborhood of $x_0$ are
\begin{equation}\label{evals_radial}
\mbox{$h^{\prime\prime}(r)$ with multiplicity one and $h^{\prime}(r)/r$ with multiplicity $N-1$,}
\end{equation}
where $r:= |x - x_0|$ and hence
\begin{equation}\label{S_k_radial1}
S_k(D^2w(x))=h^{\prime\prime}(r)\left(\frac{h^\prime(r)}{r}\right)^{k-1}\left(\begin{array}{c} N-1\\k-1\end{array}\right)+
\left(\frac{h^\prime(r)}{r}\right)^{k}\left(\begin{array}{c} N-1\\k\end{array}\right);
\end{equation}
that is,
\begin{equation}\label{S_k_radial2}
S_k(D^2w(x))=\left(\frac{h^\prime(r)}{r}\right)^{k-1}\left(\begin{array}{c} N-1\\k-1\end{array}\right)\left[h^{\prime\prime}(r)+\frac{h^\prime(r)}{r}\frac{N-k}{k}\right].
\end{equation}
\end{lem}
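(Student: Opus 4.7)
The proof is a direct computation, split into two parts: first identify the eigenvalues of $D^2w$, then apply the definition of $\sigma_k$ and a binomial identity.

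First I would compute the Hessian of $w(x) = h(r)$ with $r = |x - x_0|$ in any punctured neighborhood of $x_0$. Writing $\xi = x - x_0$, the chain rule gives
\[
D w(x) = h'(r)\,\frac{\xi}{r}, \qquad D^2 w(x) = \frac{h'(r)}{r}\,I + \left( h''(r) - \frac{h'(r)}{r} \right) \frac{\xi \otimes \xi}{r^2}.
\]
The matrix $\xi \otimes \xi/r^2$ is the orthogonal projection onto the radial line $\R \xi$; its eigenvalues are $1$ (on $\R \xi$) and $0$ (with multiplicity $N-1$ on $\xi^\perp$). Adding the multiple of the identity, the eigenvalue of $D^2w$ in the radial direction is $h'(r)/r + (h''(r) - h'(r)/r) = h''(r)$, while the eigenvalue in each of the $N-1$ tangential directions is $h'(r)/r$. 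This establishes \eqref{evals_radial}.

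Next I would compute $S_k(D^2 w(x)) = \sigma_k(\lambda(D^2w(x)))$ using the definition \eqref{sigma_k}. Every product of $k$ distinct eigenvalues either contains the single eigenvalue $h''(r)$ or does not. In the first case one must additionally choose $k-1$ tangential eigenvalues from the $N-1$ available, each contributing $h'(r)/r$, giving $\binom{N-1}{k-1}$ terms each equal to $h''(r) (h'(r)/r)^{k-1}$. In the second case one chooses $k$ tangential eigenvalues, giving $\binom{N-1}{k}$ terms each equal to $(h'(r)/r)^k$. Summing yields \eqref{S_k_radial1}.

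Finally, to obtain the factored form \eqref{S_k_radial2}, I would pull out the common factor $(h'(r)/r)^{k-1}\binom{N-1}{k-1}$ and use the elementary identity
\[
\binom{N-1}{k} = \binom{N-1}{k-1}\,\frac{N-k}{k},
\]
which immediately converts the sum into $(h'(r)/r)^{k-1}\binom{N-1}{k-1}\bigl[h''(r) + \frac{h'(r)}{r}\cdot\frac{N-k}{k}\bigr]$. There is no genuine obstacle here; the only subtle point is that one must work in a punctured neighborhood so that $r > 0$ and the quotient $h'(r)/r$ makes sense, which is exactly the hypothesis of the lemma.
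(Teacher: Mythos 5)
Your proof is correct and is exactly the standard computation that the paper invokes but does not write out (the paper simply declares \eqref{evals_radial} to be ``well known'' and says \eqref{S_k_radial1}--\eqref{S_k_radial2} ``follow easily''). The Hessian decomposition into a multiple of the identity plus a rank-one projection, the resulting eigenvalue count, the split of $\sigma_k$ into terms containing or omitting $h''(r)$, and the binomial identity $\binom{N-1}{k} = \binom{N-1}{k-1}\frac{N-k}{k}$ are all accurately carried out.
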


\begin{proof}
	The claim in \eqref{evals_radial} is well known, from which \eqref{S_k_radial1} and \eqref{S_k_radial2} follow easily.
\end{proof}

\section{Comparison and maximum principles.}

As suggested in the title, we will make use of various comparison and maximum principles for $\Sigma_k$-admissible viscosity subsolutions and supersolution in the sense of Definition \ref{defn:sub_super} and the subsequent remarks and examples. While they will be special cases of the results in \cite{HL09}, \cite{CP17} and \cite{CHLP19}, for the convenience of the reader we will give the precise statements and some indication of the proofs. In all that follows $\Omega$ will be an open bounded domain in $\R^N$.

We begin the most basic comparison result, which concerns a  $\Sigma_k$-subharmonic and $\Sigma_k$-superharmonic pair, as presented in Example \ref{exe:fundamental}.

\begin{thm}\label{thm:cp1} Suppose that $u \in \USC(\overline{\Omega})$ and $v \in \LSC(\overline{\Omega})$ are a $\Sigma_k$-admissible viscosity subsolution/supersolution pair for the homogeneous equation $S_k(D^2 u) = 0$ in $\Omega$. Then the comparison principle holds; that is,
	\begin{equation}\label{cp1}
		u \leq v \ \ \text{on} \ \partial \Omega \ \ \Rightarrow \ \ u \leq v \ \ \text{on} \ \Omega .
		\end{equation}
	\end{thm}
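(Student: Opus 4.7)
The plan is to deduce the comparison principle from Harvey–Lawson duality, combined with the maximum principle for the \emph{subaffine} (i.e.\ $\wt{\cP}$-subharmonic) subequation. The argument proceeds in three conceptual steps.

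First, I would reformulate the supersolution hypothesis via duality. By the correspondence recorded in \eqref{co-k-convex}, the statement that $v \in \LSC(\overline{\Omega})$ is a $\Sigma_k$-admissible supersolution of $S_k(D^2u) = 0$ is equivalent to the statement that $-v \in \USC(\overline{\Omega})$ is $\wt{\Sigma}_k$-subharmonic on $\Omega$, where $\wt{\Sigma}_k = -(\Sigma_k^{\circ})^c$ is the Dirichlet dual defined in \eqref{Dirichlet_dual}. The hypothesis on $u$ is already a subharmonicity statement with respect to $\Sigma_k$.

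Second, I would combine the subharmonicities of $u$ (with respect to $\Sigma_k$) and of $-v$ (with respect to $\wt{\Sigma}_k$) by means of the subequation sum theorem to conclude that
\[
w := u + (-v) = u - v \in \USC(\overline{\Omega})
\]
is $\wt{\cP}$-subharmonic on $\Omega$. The algebraic input is the inclusion
\[
\Sigma_k + \wt{\Sigma}_k \subset \wt{\cP},
\]
which is a direct consequence of $\cP$ being a monotonicity cone for both subequations: indeed $\cP \subset \Sigma_k$ and $\cP \subset \wt{\Sigma}_k$ by the chains \eqref{BCE2}--\eqref{BCE3}, and together with the convex cone structure from Lemma~\ref{lem:S_Theta_k}(a) this forces the claimed inclusion. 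The viscosity analogue, namely that the pointwise sum of a $\Sigma_k$-subharmonic and a $\wt{\Sigma}_k$-subharmonic USC function is $\wt{\cP}$-subharmonic, is established via the standard sup-convolution regularization (reducing to semiconvex approximants which are twice differentiable a.e.\ by Alexandrov's theorem, applying the algebraic inclusion pointwise, and passing to the limit via stability). I would cite this step from \cite{HL09} or, in the form adapted to compatible pairs $(S_k, \Sigma_k)$ as in \eqref{compatible_pair}, from \cite{CHLP19}.

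Third, I would invoke the maximum principle for $\wt{\cP}$-subharmonic functions on bounded domains: any $w \in \USC(\overline{\Omega})$ which is $\wt{\cP}$-subharmonic on $\Omega$ satisfies $\max_{\overline{\Omega}} w = \max_{\partial \Omega} w$. This is the celebrated equivalence between $\wt{\cP}$-subharmonicity and the \emph{subaffine} property proved by Harvey–Lawson in \cite{HL09}. Applied to $w = u - v$ together with the hypothesis $u \leq v$ on $\partial \Omega$, this yields $u - v \leq \max_{\partial \Omega}(u - v) \leq 0$ on $\overline{\Omega}$, which is the desired conclusion.

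The main technical obstacle is Step~2, the upgrade from the clean algebraic inclusion $\Sigma_k + \wt{\Sigma}_k \subset \wt{\cP}$ to the corresponding viscosity statement for the pointwise sum of two merely semi-continuous functions. The sum $u + (-v)$ is only USC, and neither summand need be twice differentiable anywhere, so the second-order information has to be manufactured via sup-convolution and Jensen's lemma. Since the infrastructure is fully developed in \cite{HL09} and \cite{CHLP19}, the proof reduces to invoking this machinery after verifying the two elementary inclusions $\cP \subset \Sigma_k$ and $\cP \subset \wt{\Sigma}_k$, both immediate from the chains \eqref{BCE2}--\eqref{BCE3}.
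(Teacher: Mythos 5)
Your overall architecture matches the paper's proof exactly: dualize the supersolution condition to $\wt{\Sigma}_k$-subharmonicity of $-v$, apply the subequation sum theorem to conclude $w = u - v$ is $\wt{\cP}$-subharmonic, and invoke the zero maximum principle (subaffine theorem) for $\wt{\cP}$. That is precisely the argument the paper outlines via \cite{HL09} and \cite{CHLP19}.

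However, your justification of the key algebraic inclusion $\Sigma_k + \wt{\Sigma}_k \subset \wt{\cP}$ is a non sequitur. You claim it follows from the containments $\cP \subset \Sigma_k$ and $\cP \subset \wt{\Sigma}_k$ together with ``the convex cone structure,'' but this reasoning does not go through: for one thing, $\wt{\cP} = \{A \in \Ss(N): \lambda_N(A) \geq 0\}$ is a cone but is \emph{not} convex, and more to the point, $\cP \subset F$ is a logically different (and here irrelevant) statement from $\cP$-monotonicity $F + \cP \subset F$. The containment $\cP \subset \Sigma_k$ holds only because $\Sigma_N = \cP$ and is an accident of the chain \eqref{chain3}; it plays no role. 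What the inclusion actually needs is the positivity property \eqref{P} together with the definition \eqref{Dirichlet_dual} of the dual: if $A \in \Sigma_k$, $B \in \wt{\Sigma}_k$, and $A + B \notin \wt{\cP}$, then $A + B < 0$, so $B = -(A + P)$ with $P > 0$; positivity forces $A + P \in \Sigma_k^{\circ}$ (any small perturbation of $A + P$ still lands in $\Sigma_k$), hence $B \in -\Sigma_k^{\circ}$, contradicting $B \in \wt{\Sigma}_k = (-\Sigma_k^{\circ})^c$. This is what the paper means by ``coming from the $\cP$-monotonicity of $\Sigma_k$ and its dual.'' Your opening sentence of Step~2 (``direct consequence of $\cP$ being a monotonicity cone'') is the right statement, but the explication that follows substitutes the wrong fact, so tighten that up before the argument can be considered complete.
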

\begin{proof} The hypothesis is equivalent to saying that $u$ and $v$ are $\Sigma_k$-subharmonic and $\Sigma_k$-superharmonic in $\Omega$, as discussed in Example \ref{exe:fundamental}. Since $\Sigma_k$ is a pure second order subequation, one has the  comparison principle \eqref{cp1} as a corollary of the comparison principle of \cite{HL09} (see also Theorem 9.3 of \cite{CHLP19}). The main ingredients in the proof are that $-v$ is $\wt{\Sigma}_k$-subharmonic and that $w:= u - v$ is $\wt{\mathcal{P}}$-subharmonic (coming from the $\mathcal{P}$-monotonicity of $\Sigma_k$ and its dual), for which the {\em zero maximum principle} holds
	\begin{equation}\label{zmp}
w \leq 0 \ \ \text{on} \ \partial \Omega \ \ \Rightarrow \ \ w \leq 0 \ \ \text{on} \ \Omega .
\end{equation}	
See section 7 of \cite{CHLP19} for details in the general case of $\wt{\mathcal{M}}$-monotone subequations.
	\end{proof}

Notice that in Theorem \ref{thm:cp1}, $u$ is $k$-convex and $-v$ is co-$k$-convex, as as described in Example \ref{exe:fundamental}. A simple corollary is the zero maximum principle for $k$-convex functions.

\begin{cor}\label{cor:cp1} If $u \in \USC(\overline{\Omega})$ is $k$-convex in $\Omega$ then 
	\begin{equation}\label{zmp2}
	u \leq 0 \ \ \text{on} \ \partial \Omega \ \ \Rightarrow \ \ u \leq 0 \ \ \text{on} \ \Omega .
	\end{equation}
\end{cor}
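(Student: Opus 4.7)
The plan is to deduce the corollary directly from Theorem \ref{thm:cp1} by taking the supersolution to be $v \equiv 0$. Under the hypothesis, $u$ is $k$-convex, which by Example \ref{exe:fundamental} is the same as being a $\Sigma_k$-admissible subsolution of $S_k(D^2 u) = 0$. So the only thing to verify is that the constant function $v \equiv 0$ qualifies as a $\Sigma_k$-admissible supersolution of the same equation on $\Omega$, in the sense of Definition \ref{defn:sub_super}(b). Once this is established, the comparison principle \eqref{cp1} applied to the pair $(u,0)$ gives $u \leq 0$ on $\Omega$ immediately from $u \leq 0$ on $\partial \Omega$.

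To verify the supersolution condition for $v \equiv 0$, let $\varphi \in C^2$ near $x_0 \in \Omega$ be such that $v - \varphi = -\varphi$ attains a local minimum at $x_0$. Equivalently, $\varphi$ has a local maximum at $x_0$, so by elementary calculus $D^2 \varphi(x_0) \leq 0$ in $\Ss(N)$; that is, all eigenvalues of $D^2\varphi(x_0)$ are nonpositive. We must show that either $S_k(D^2 \varphi(x_0)) \leq 0$ or $D^2 \varphi(x_0) \notin \Sigma_k$. If the latter fails, then $D^2 \varphi(x_0) \in \Sigma_k$, and since $\Sigma_k \subset \Sigma_1 = \mathcal{H}$ by \eqref{chain3}--\eqref{Theta_1_N}, one has $\mathrm{tr}(D^2\varphi(x_0)) \geq 0$. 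Combined with $D^2 \varphi(x_0) \leq 0$, this forces $D^2 \varphi(x_0) = 0$ and hence $S_k(D^2 \varphi(x_0)) = \sigma_k(0) = 0$, so the supersolution inequality is satisfied.

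Having established that $v \equiv 0$ is a $\Sigma_k$-admissible supersolution on $\Omega$ and belongs to $\LSC(\overline{\Omega})$, and since $u \in \USC(\overline{\Omega})$ with $u \leq 0 = v$ on $\partial \Omega$, Theorem \ref{thm:cp1} yields $u \leq v = 0$ on $\Omega$, which is the desired conclusion \eqref{zmp2}. There is no real obstacle here: the whole content of the corollary is absorbed into the choice of the trivial supersolution, and the nontrivial work has already been done in the underlying comparison result (ultimately the zero maximum principle for $\wt{\mathcal{P}}$-subharmonic functions via Harvey--Lawson duality). The only subtle point worth spelling out is the verification above that the constant $0$ genuinely satisfies the admissibility condition, which relies on the inclusion $\Sigma_k \subset \mathcal{H}$ to rule out the only case where a nontrivial test function could potentially cause trouble.
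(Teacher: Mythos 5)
Your proof is correct and follows the same essential route as the paper: apply Theorem \ref{thm:cp1} with the trivial supersolution $v \equiv 0$. The only divergence is cosmetic: where the paper verifies that $v \equiv 0$ is a $\Sigma_k$-admissible supersolution by invoking Dirichlet duality and the coherence property for $\wt{\Sigma}_k$ (namely $D^2(-v) = 0 \in \wt{\Sigma}_k$), you instead check Definition \ref{defn:sub_super}(b) directly, using $D^2\varphi(x_0) \le 0$ at a local minimum of $-\varphi$ together with $\Sigma_k \subset \mathcal{H}$ to force $D^2\varphi(x_0)=0$ whenever $D^2\varphi(x_0) \in \Sigma_k$; this is a perfectly valid and slightly more self-contained way to carry out the same verification.
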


\begin{proof}
	It suffices to apply Theorem \ref{thm:cp1} with the smooth function $v \equiv 0$. Since $\wt{\Sigma_k}$ also satisfies the positivity property \eqref{P}, the coherence property of Remark \ref{rem:coherence} holds and hence $v \equiv 0$ is $\Sigma_k$-superharmonic since $0 = S_k(-v) \in \wt{\Sigma_k}$. 
\end{proof}

More is true. One has the strong maximum principle for $\Sigma_k$-subharmonic functions. 

\begin{thm}\label{thm:SMP}
For each $u \in \USC(\Omega)$ which is $\Sigma_k$-subharmonic ($k$-convex) on a bounded domain (open, connected set) one has
	\begin{equation}\label{SMP}
	\mbox{if there exists $x_0 \in \Omega$ with $\displaystyle{u(x_0) = \sup_{\Omega}u}$, \ then $u$ is constant in $\Omega$.}
	\end{equation}
	In particular, for $u$ as in Corollary \ref{cor:cp1}, one has either $u < 0$ in $\Omega$ or $u \equiv 0$ on $\overline{\Omega}$.
\end{thm}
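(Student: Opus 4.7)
The plan is to exploit the chain of inclusions $\Sigma_k \subset \Sigma_1 = \mathcal{H}$ recorded in \eqref{chain3} and \eqref{Theta_1_N} in order to reduce the strong maximum principle for $\Sigma_k$-subharmonic functions to the classical strong maximum principle for subharmonic functions. The idea is that the $k$-convexity constraint is a strictly stronger admissibility requirement than being (viscosity) subharmonic with respect to the Laplacian, so the classical theory immediately applies.

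First I would verify that every $\Sigma_k$-subharmonic $u \in \USC(\Omega)$ is $\Sigma_1$-subharmonic, i.e.\ a viscosity subsolution of $-\Delta u = 0$ in $\Omega$. By Definition \ref{defn:k_convex} and the jet reformulation in Remark \ref{rem:k-convex}, any $(p,A) \in J^{2,+}u(x_0)$ satisfies $A \in \Sigma_k$, hence $A \in \Sigma_1 = \mathcal{H}$, which is precisely the statement ${\rm tr}(A) \geq 0$. Equivalently, if $\varphi$ is an upper test function for $u$ at $x_0$, then $D^2\varphi(x_0) \in \Sigma_k \subset \mathcal{H}$, so $\Delta \varphi(x_0) \geq 0$.

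Second I would invoke the classical strong maximum principle for viscosity-subharmonic $\USC$ functions on a connected open set: if $u$ attains its supremum at an interior point then $u$ is constant. This follows from the sub-mean value inequality
\begin{equation*}
u(x) \leq \frac{1}{|B_r|}\int_{B_r(x)} u(y)\,dy, \qquad B_r(x) \Subset \Omega,
\end{equation*}
valid for any $\USC$ viscosity-subharmonic function, together with a standard connectedness argument: the contact set $\{x \in \Omega : u(x) = \sup_\Omega u\}$ is closed in $\Omega$ by upper semicontinuity and open by the sub-mean value inequality. Alternatively, one may quote the Harvey--Lawson strong maximum principle for $\mathcal{H}$-subharmonic functions from \cite{HL09} (or its presentation in \cite{CHLP19}), which yields the conclusion directly after the observation $\Sigma_k \subset \mathcal{H}$.

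Finally, for the "in particular" statement, suppose $u \in \USC(\overline{\Omega})$ is $k$-convex with $u \leq 0$ on $\partial \Omega$, so Corollary \ref{cor:cp1} gives $u \leq 0$ in $\Omega$. If $u$ is not strictly negative in $\Omega$, then there exists $x_0 \in \Omega$ with $u(x_0) = 0 = \sup_{\Omega} u$; the first part of the theorem then forces $u \equiv 0$ on $\Omega$, and upper semicontinuity of $u$ at each $y \in \partial \Omega$ combined with $u(y) \leq 0$ yields $u(y) = 0$, hence $u \equiv 0$ on $\overline{\Omega}$. The only genuinely nontrivial ingredient is the sub-mean value inequality for merely $\USC$ viscosity-subharmonic functions; once that (or the equivalent abstract strong maximum principle in the potential theory of elliptic sets) is accepted, everything else is a direct reduction via the inclusion $\Sigma_k \subset \Sigma_1$.
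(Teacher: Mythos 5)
Your proof follows exactly the same route as the paper: reduce $\Sigma_k$-subharmonicity to $\mathcal{H}$-subharmonicity via the inclusion $\Sigma_k \subset \Sigma_1 = \mathcal{H}$, then invoke the sub-mean value inequality and the standard open-closed connectedness argument. You merely spell out the connectedness step and the ``in particular'' clause, which the paper leaves implicit.
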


\begin{proof}
	As noted in \eqref{Theta_1_N}, the admissibility constraint sets satisfy 
	$$ 
	\Sigma_k \subset \Sigma_1 = \mathcal{H} \ \  \text{for each} \ k = 1, \ldots, N
	$$
	and hence each $u$ which is $\Sigma_k$-subharmonic on $\Omega$ will be $\mathcal{H}$-subharmonic on $\Omega$ and hence satisfies the mean value inequality
	$$
	u(x_0) \leq \frac{1}{|B_r(x_0)|} \int_{B_r(x_0)} u(x) \, dx
	$$
for each $B_r(x_0) \subset \Omega$, from which \eqref{SMP} follows. 
	\end{proof}

Our next comparison result is tailored for some of the pointwise estimates we will need.

\begin{thm}\label{thm:cp2} Let $c \geq 0$ be fixed. Suppose that $u \in \LSC(\overline{\Omega})$ satisfies \footnote{ By this we mean that $u$ is a $\Sigma_k$-admissible viscosity supersolution of the equation $S_k(D^2u) - c = 0$.} 
	\begin{equation}\label{S_k_c}
		S_k(D^2 u) \leq c \ \  \text{in} \ \Omega. 
		\end{equation}
		Suppose that $v \in C^2(\Omega) \cap C(\overline{\Omega})$ is a strictly $k$-convex strict subsolution, that is, 
	\begin{equation}\label{v_strict}
		D^2 v(x) \in \Sigma_k^{\circ} \ \ \text{and} \ \ S_k(D^2v(x)) > c \ \ \text{for all} \ x \in \Omega.
	\end{equation}
Then, one has the comparison principle
	\begin{equation}\label{cp1}
	v \leq u \ \ \text{on} \ \partial \Omega \ \ \Rightarrow \ \ v \leq u \ \ \text{on} \ \Omega .
	\end{equation}
\end{thm}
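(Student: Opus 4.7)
The plan is to argue by contradiction, exploiting the fact that $v$ is $C^2$ so that no doubling of variables is needed. Suppose that $v \leq u$ fails on $\Omega$: then since $v - u \in \USC(\overline{\Omega})$ (being the difference of a continuous function and an LSC one) and $\overline{\Omega}$ is compact, the maximum $M := \max_{\overline{\Omega}} (v - u)$ is attained and is strictly positive. The boundary hypothesis $v \leq u$ on $\partial \Omega$ forces the maximum point $x_0$ to lie in the interior $\Omega$.

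At $x_0$, the function $u - v$ attains a local minimum, so the $C^2$ function $\varphi := v + (u(x_0) - v(x_0))$ (equivalently $v$ itself, since only the derivatives of the test function enter) is a valid lower test function for $u$ at $x_0$. By the strict admissibility hypothesis \eqref{v_strict}, we have $D^2 \varphi(x_0) = D^2 v(x_0) \in \Sigma_k^{\circ} \subset \Sigma_k$. Thus the admissibility ``escape clause'' in the supersolution definition \eqref{Sk_supersolution} (equivalently, in the reformulation of Remark \ref{rem:Sk_subsuper}(b)) is unavailable, and the supersolution condition on $u$ at $x_0$ forces
\[ S_k(D^2 v(x_0)) - c \leq 0, \]
which directly contradicts the strict subsolution inequality $S_k(D^2 v(x_0)) > c$ assumed in \eqref{v_strict}.

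There is essentially no serious obstacle here: the $C^2$ regularity of $v$ removes the need for Ishii's lemma or sup-convolution perturbations of $u$, and the two strictness hypotheses play complementary roles. The strict admissibility $D^2 v(x_0) \in \Sigma_k^{\circ}$ is precisely what rules out the ``$D^2 \varphi(x_0) \notin \Sigma_k$'' branch of the supersolution test, so that the Hessian inequality must actually fire; the strict inequality $S_k(D^2 v) > c$ then converts this into a genuine contradiction rather than merely an equality. The argument would break down without either strictness, which is why the theorem is stated for a \emph{strict} $k$-convex subsolution rather than a generic one.
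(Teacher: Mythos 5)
Your proof is correct and is essentially the same argument as the paper's: contradict by taking an interior maximum of $v-u$, use $v$ itself as the lower test function in the supersolution definition, and note that strict $k$-convexity of $v$ closes off the admissibility escape clause while strictness of $S_k(D^2v) > c$ produces the contradiction. Your remark distinguishing the two roles of the two strictness hypotheses is a clean, accurate gloss on why the theorem is stated as it is.
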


\begin{proof}
Suppose not, then $v - u \in \USC(\overline{\Omega})$ will have a (positive) maximum at some interior point $x_0 \in \Omega$. Hence $u - v$ will have a (negative) minimum at $x_0$. Choose $\varphi = v$ in Definition \ref{defn:sub_super} (b) of a $\Sigma_k$-admissible supersolution $u$ to find
$$
S_k(D^2v(x_0)) \leq c \ \ \text{or} \ \  D^2 v(x) \not\in \Sigma_k^{\circ},
$$
which contradicts \eqref{v_strict}.
\end{proof}

Some variants of these principles will also be present in some of the proofs.

\section{Boundary estimates}

For the minimum principle characterization of Theorem \ref{thm:MPC} and for the global H\"{o}lder regularity result of Theorem \ref{thm:GHB}, we will make use of various barrier functions which provide some needed one-sided bounds near the boundary $\partial \Omega$ of bounded $C^2$ domains. The arguments are standard, but the details involve having a sufficiently robust calculus for the $k$-Hessian. 

The first estimate is a form of the Hopf lemma which will be applied to the subsolutions $\psi$ competing in the Definition of the principal eigenvalue when we prove the minimum principle characterization of Theorem \ref{thm:MPC}. 

\begin{proposition}\label{prop:be1}
Given $\lambda \geq 0$. Suppose that $\psi \in \USC(\overline{\Omega})$ is a $k$-convex subsolution i.e.  a $\Sigma_k$-admissible subsolution  in the sense of Definition \ref{defn:sub_super} of
\begin{equation}\label{EV_problem}
\left\{ \begin{array}{lc}
S_k(D^2\psi) + \lambda\psi|\psi|^{k-1} = 0 &\mbox{in}\ \Omega\\
\psi=0 &\mbox{on}\ \partial\Omega
\end{array}
\right. 
\end{equation}
which is negative on $\Omega$. Then 
\begin{equation}\label{barrier_estimate1}
	\mbox{ there exists \ $C_1>0$ \ such that \ $\psi(x)\leq -C_1 \, d(x)$ \ for all $x \in \Omega_{\delta/2}$,}
	\end{equation}
where $\Omega_{\delta/2}$ is the tubular neighborhood defined as in  \eqref{Omega_delta} with $\delta > 0$ such that the conclusions of Lemma \ref{lem:good_tubes} hold in the larger neighborhood $\Omega_{\delta}$.
\end{proposition}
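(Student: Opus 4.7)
The plan is to construct a smooth radial barrier for $\psi$ from above on a thin tubular neighborhood of $\partial\Omega$ and invoke the comparison principle of Theorem \ref{thm:cp1}. First, since $\psi<0$ in $\Omega$ and $\lambda\ge 0$, the subsolution inequality in \eqref{EV_problem} reduces to $S_k(D^2\psi)\ge\lambda|\psi|^k\ge 0$ in the $\Sigma_k$-admissible viscosity sense, so $\psi$ is $\Sigma_k$-subharmonic in $\Omega$. The strong maximum principle (Theorem \ref{thm:SMP}) together with $\psi\in\USC(\overline{\Omega})$ and the compactness of $\{x\in\overline{\Omega}:\, d(x)\ge\delta/2\}$ yields a strictly positive constant $m:=-\sup_{\{d\ge\delta/2\}}\psi>0$.

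For a fixed $y_0\in\partial\Omega$, I would use the uniform interior sphere condition of Lemma \ref{lem:good_tubes}(a) to place a ball $B_\delta(x_c)\subset\Omega$ tangent to $\partial\Omega$ at $y_0$, with center $x_c:=y_0-\delta\nu(y_0)$. On the annulus $\Omega':=B_\delta(x_c)\setminus\overline{B_{\delta/2}(x_c)}$, consider the radial barrier
$$
v(x):=-A_0\bigl(e^{-\alpha r^2}-e^{-\alpha\delta^2}\bigr),\qquad r:=|x-x_c|,
$$
with $\alpha,A_0>0$ to be chosen. A direct application of Lemma \ref{lem:radial} shows that for $\alpha$ large enough (depending only on $N$ and $\delta$) both the radial second derivative $v''(r)$ and the trace $\mathrm{tr}(D^2v)$ are strictly negative on $[\delta/2,\delta]$, so $D^2v\notin\cH$ and in particular $D^2v\notin\Sigma_k$. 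This makes $v$ a $\Sigma_k$-admissible supersolution of $S_k=0$ on $\Omega'$ in the sense of Definition \ref{defn:sub_super}(b): any lower test function $\varphi$ for $v$ at $x_0\in\Omega'$ with $D^2\varphi(x_0)\in\Sigma_k\subset\cH$ would satisfy $D^2\varphi(x_0)\le D^2v(x_0)$ and therefore $\mathrm{tr}(D^2\varphi(x_0))\ge 0$, contradicting $\mathrm{tr}(D^2v(x_0))<0$; hence the supersolution condition is vacuously fulfilled.

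With $\alpha$ fixed, I would then choose $A_0>0$ small enough that $A_0(e^{-\alpha\delta^2/4}-e^{-\alpha\delta^2})\le m$. This gives $v\ge\psi$ on $\partial\Omega'$: on the outer sphere $v\equiv 0\ge\psi$ by Corollary \ref{cor:cp1}, while on the inner sphere (which lies in $\{d\ge\delta/2\}$) one has $\psi\le -m\le v$. Theorem \ref{thm:cp1} applied on $\Omega'$ then yields $\psi\le v$ in $\Omega'$. A short Taylor estimate using $e^s-1\ge s$ for $s\ge 0$ and $\delta+r\ge\delta$ provides $v(x)\le -C_1(\delta-r)$ with $C_1:=A_0\alpha\delta e^{-\alpha\delta^2}$ independent of $y_0$. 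Finally, for each $x\in\Omega_{\delta/2}$ Lemma \ref{lem:good_tubes}(c) gives a unique nearest boundary point $y_0=y(x)$ with $x=y_0-d(x)\nu(y_0)$, and for the corresponding $x_c$ one has $|x-x_c|=\delta-d(x)$; applying the barrier estimate at this $y_0$ yields $\psi(x)\le v(x)\le -C_1 d(x)$, as desired. The main obstacle is producing a $\Sigma_k$-superharmonic barrier with the right boundary profile when $\Omega$ is only strictly $(k-1)$-convex; the strategy is not to enforce the classical inequality $S_k(D^2v)\le 0$ with $D^2v\in\Sigma_k$ (awkward under this weaker convexity), but instead to push $D^2v$ entirely outside $\Sigma_k$ by forcing a negative trace, for which the exponential radial profile with large $\alpha$ is ideally suited.
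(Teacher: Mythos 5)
Your proof is correct and follows the same overall strategy as the paper's: compare $\psi$ with a radial barrier on an annulus built from the interior sphere condition. The differences are in the details. The paper uses the profile $w(r)=C_0(e^{-m\delta}-e^{-mr})$ and computes $S_k(D^2w)$ via \eqref{S_k_radial2}, showing it is strictly negative for $m>2(N-k)/(k\delta)$, then derives a direct contradiction with the $\Sigma_k$-admissible subsolution inequality for $\psi$ (which forces both $S_k(D^2w(\bar x))\ge 0$ and $D^2w(\bar x)\in\Sigma_k$ at an interior touching point). You instead use the profile $e^{-\alpha r^2}$ and verify that the barrier is $\Sigma_k$-superharmonic by observing only that ${\rm tr}(D^2v)<0$ for $\alpha$ large, hence $D^2v\notin\cH=\Sigma_1\supset\Sigma_k$, then invoke the comparison of Theorem \ref{thm:cp1} for $\Sigma_k$-subharmonic/superharmonic pairs. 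Your trace observation is a genuine small simplification: it avoids computing $\sigma_k$ of the radial eigenvalues and makes manifest that the barrier works uniformly for all $k$, relying only on the inclusion $\Sigma_k\subset\cH$. Two minor remarks: the appeal to the strong maximum principle to obtain $m>0$ is redundant, since strict negativity of $\psi$ on $\Omega$ is already assumed and $\{d\ge\delta/2\}$ is a compact subset of $\Omega$ on which the $\USC$ function $\psi$ attains its (negative) supremum; and the center is $x_c=y_0+\delta\nu(y_0)$ with the paper's convention that $\nu$ is the \emph{inner} normal (your formula uses the outer-normal convention), though the subsequent identity $|x-x_c|=\delta-d(x)$ is used correctly.
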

\begin{proof}
One uses the usual argument of comparing $\psi$ with a smooth radial function $w$ which is a strict supersolution of \eqref{EV_problem} and dominates $\psi$ on the boundary of an annular region within the good tubular neighborhood $\Omega_{\delta}$ (see Proposition 2.5 of \cite{BGI18}, for example).
For completeness, we give the argument.

For each $x_0 \in \Omega_{\delta/2}$, denote by $y_0 = y_0(x_0) \in \partial \Omega$ which realizes the distance to the boundary and by $z_0 = z_0(x_0) = \delta(x_0 - y_0)/|x_0 - y_0|$ the center of an interior ball such that 
\begin{equation}\label{inderior_sphere}
	B_{\delta}(z_0) \subset \Omega \quad  \text{and} \quad \overline{B}_{\delta}(z_0) \cap  (\R^N \setminus \Omega ) = \{y_0\}.
\end{equation}
Consider the smooth negative radially symmetric function 
\begin{equation}\label{radial_barrier}
    w(x)=C_0(e^{-m \delta} - e^{-m|x-z_0|}) \ \ \mbox{for each} \ x \in \mathcal{A}:= B_{\delta}(z_0) \setminus \overline{B}_{\delta/2}(z_0),
\end{equation}
where $C_0$ and $m$ are chosen to satisfy
\begin{equation}\label{pick_C0}
C_0 \leq \frac{\sup_{\Omega \setminus \Omega_{\delta/2}} \psi}{e^{-m\delta}-e^{-m \delta/2}} 
\end{equation}
and 
\begin{equation}\label{pick_m}
    m > \frac{2(N  - k)}{k \delta}.
\end{equation}
One has $w(x)\geq \psi(x)$ on $\partial \mathcal{A}$; which is trivial on the outer boundary where $w$ vanishes and  the choice of $C_0$ in \eqref{pick_C0} is used on the inner boundary which is contained in $\Omega \setminus \Omega_{\delta/2}$. The key point is to show that
\begin{equation}\label{barrier_comparison}
	w(x) \geq \psi(x) \ \text{on} \ \mathcal{A}.
	\end{equation}
Suppose, by contradiction, that $\psi-w$ has a positive maximum point at $\bar{x}$ in the interior of $\mathcal{A}$. By Definition \ref{defn:sub_super}(a) of the $\Sigma_k$-admissible subsolution $\psi$ of the PDE, one has
$$
    S_k(D^2w(\bar{x}))\geq -\lambda\psi(\bar{x})|\psi(\bar{x})|^{k-1}\geq 0
$$
But, using the radial calculation \eqref{S_k_radial2} with $h(r) = C_0(e^{- m \delta} - e^{-mr})$ and $r = x - z_0$, with the choice on $m$ in \eqref{pick_m} where $r > \delta/2$, one has
$$
S_k(D^2w(x))=\frac{C_0^k m^ke^{-mkr}}{r^k}\left(\begin{array}{c} N-1\\k-1\end{array}\right)\left[-mr+ \frac{N-k}{k} \right]<0
$$
which yields a contradiction. Hence, for $x_0 \in \Omega_{\delta/2}$ one has
$$
    \psi(x_0) \leq w(x_0) = C_0 e^{-m \delta} \left(1 - e^{m d(x_0)} \right) \leq  C_0 e^{-m \delta} (-m d(x_0))
$$
and one can take $C_1 =- mC_0e^{-m \delta}$. 
\end{proof}

The next estimate gives a lower bound near the boundary for the supersolutions treated in the minimum principle characterization of Theorem \ref{thm:MPC}.

\begin{proposition}\label{prop:be2}
Let $\Omega$ be a bounded strictly $(k-1)$-convex domain and $\lambda \geq 0$. Suppose that $u \in \LSC(\overline{\Omega})$ is a $\Sigma_k$-admissible supersolution \footnote{ By this we mean that $u$ is a $\Sigma_k$-admissible supersolution of $S_k(D^2 u) + \lambda u |u|^{k-1} = 0$ in $\Omega$ in the sense of Definition \ref{defn:sub_super}(b) and that $u \geq 0$ on $\partial \Omega$.} of 
\begin{equation}\label{EV_problem2}
\left\{\begin{array}{lc}
S_k(D^2u) + \lambda u|u|^{k-1} = 0 &\mbox{in}\ \Omega\\
u = 0 &\mbox{on}\ \partial\Omega
\end{array}
\right. .
\end{equation}
Then 
\begin{equation}\label{barrier_estimate2}
\mbox{there exist \ $C_2>0$ \ and \ $d_0 >0$ \ such that \  $u(x)\geq -C_2 \, d(x)$ \ for all $x \in \Omega_{d_0}$,}
\end{equation}
where, as always, $\Omega_{d_0}:= \{ x \in \Omega: \ 0 < d(x) < d_0 \}$. 
\end{proposition}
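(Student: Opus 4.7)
The plan is to construct a smooth, strictly $k$-convex classical strict subsolution $v$ on a tubular neighborhood of $\partial\Omega$ and to apply the comparison result of Theorem~\ref{thm:cp2}. Since $u\in \LSC(\overline{\Omega})$ is bounded below on the compact set $\overline{\Omega}$, pick $M\geq 0$ with $u\geq -M$ on $\overline{\Omega}$. For any lower test function $\varphi$ for $u$ at $x_0$ with $D^2\varphi(x_0)\in\Sigma_k$, the supersolution condition \eqref{Sk_supersolution} gives
\begin{equation*}
S_k(D^2\varphi(x_0))\leq -\lambda\, u(x_0)|u(x_0)|^{k-1}\leq \lambda M^k,
\end{equation*}
the last inequality being trivial when $u(x_0)\geq 0$ and amounting to $|u(x_0)|\leq M$ when $u(x_0)<0$. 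Thus $u$ is automatically a $\Sigma_k$-admissible supersolution of $S_k(D^2u)-c=0$ on $\Omega$ with $c:=\lambda M^k$, which is the setting of Theorem~\ref{thm:cp2}.

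For the barrier, I take $v(x)=g(d(x))$ with $g(t):=-at+ct^2$ and positive constants $a,c,d_0\leq\delta$ to be chosen. Then $v\in C^2(\overline{\Omega_\delta})$, $v=0$ on $\partial\Omega$, $g'(t)=-a+2ct$, $g''(t)=2c$, and by Lemma~\ref{lem:PCS} the eigenvalues of $D^2v(x)$ at $x\in\Omega_\delta$ (with $d=d(x)$ and $\kappa_1,\dots,\kappa_{N-1}$ the principal curvatures at the nearest boundary point) are
\begin{equation*}
\frac{\kappa_i(a-2cd)}{1-\kappa_i d}\ (i=1,\dots,N-1)\quad\text{and}\quad 2c.
\end{equation*}
At $d=0$ this vector factors as $a\bigl(\kappa_1,\dots,\kappa_{N-1},\,2c/a\bigr)$. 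Taking $c:=aR_0/2$ with $R_0>0$ furnished by Lemma~\ref{lem:k-1_convex_bdy}, it lies in $\Gamma_k$ uniformly for $y\in\partial\Omega$, so $D^2v|_{d=0}\in\Sigma_k^\circ$; by continuity of the eigenvalues, for $d_0$ small enough depending only on $\Omega$ and $k$, $D^2v(x)\in\Sigma_k^\circ$ throughout $\Omega_{d_0}$.

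At $d=0$, the identity~\eqref{sigma1} together with the homogeneity of $\sigma_j$ gives
\begin{equation*}
S_k(D^2v)=2c\, a^{k-1}\sigma_{k-1}(\kappa_1,\dots,\kappa_{N-1})+a^k\sigma_k(\kappa_1,\dots,\kappa_{N-1})\geq R_0\, a^k\,\mu_0,
\end{equation*}
where $\mu_0:=\min_{y\in\partial\Omega}\sigma_{k-1}(\kappa(y))>0$ by strict $(k-1)$-convexity; after a possible further shrinking of $d_0$ the bound $S_k(D^2v)\geq \tfrac12 R_0 a^k\mu_0$ persists throughout $\Omega_{d_0}$. Choosing $a$ first so that $\tfrac12 R_0 a^k\mu_0>\lambda M^k$, and then enlarging $a$ if necessary to ensure $ad_0-cd_0^2\geq M$, we obtain both $S_k(D^2v)>c$ on $\Omega_{d_0}$ and $v\leq -M\leq u$ on $\{d=d_0\}$; combined with $v=0=u$ on $\partial\Omega$, this gives $v\leq u$ on $\partial\Omega_{d_0}$. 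Theorem~\ref{thm:cp2} applied on $\Omega_{d_0}$ then yields $v\leq u$ in $\Omega_{d_0}$, whence
\begin{equation*}
u(x)\geq -a\, d(x)+c\, d(x)^2\geq -a\, d(x),\qquad x\in\Omega_{d_0},
\end{equation*}
which is~\eqref{barrier_estimate2} with $C_2:=a$.

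I expect the main obstacle to be the ``wrong-sign'' zeroth-order term $\lambda u|u|^{k-1}$, which is not monotone in the direction needed for a direct classical-vs-admissible comparison with the full equation: if $u<v<0$ at a would-be crossing point, the supersolution inequality for $u$ and the strict subsolution inequality for $v$ are mutually consistent rather than contradictory. The cheap way around this is to use the a priori lower bound $u\geq -M$ from lower semicontinuity to freeze the zeroth-order term at its worst value $\lambda M^k$, reducing to the constant right-hand side comparison of Theorem~\ref{thm:cp2}; the barrier construction itself is then routine once Lemma~\ref{lem:k-1_convex_bdy} and Lemma~\ref{lem:PCS} are in hand.
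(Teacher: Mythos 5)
Your argument is correct and close in spirit to the paper's (both construct a strictly $k$-convex strict subsolution $v = g\circ d$ near the boundary and compare), but the reduction you make at the start is a genuine and clean improvement. The paper's barrier $\varphi = e^{-td}-1$ is engineered to be a strict subsolution of the \emph{full} equation $S_k + \lambda\,\varphi|\varphi|^{k-1} > 0$ and then the authors argue directly from Definition~\ref{defn:sub_super}(b); however, what that definition actually delivers at an interior minimum $x_0$ of $u-\varphi$ is an inequality involving $\lambda\, u(x_0)|u(x_0)|^{k-1}$, not $\lambda\,\varphi(x_0)|\varphi(x_0)|^{k-1}$, and since $u(x_0) < \varphi(x_0) < 0$ and $r\mapsto r|r|^{k-1}$ is increasing, the term involving $u(x_0)$ is \emph{more} negative. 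One therefore needs an a priori bound on $|u(x_0)|$ for the contradiction to close; the paper implicitly relies on the LSC bound $u\geq -1/2$ near the boundary but never makes the replacement explicit. Your move of freezing the zeroth-order term once and for all via $u\geq -M$ (so that $u$ is a $\Sigma_k$-admissible supersolution of $S_k(D^2u)=\lambda M^k$, the constant right-hand side setting of Theorem~\ref{thm:cp2}) is exactly the right way to make this rigorous and removes the wrong-monotonicity headache entirely. The polynomial barrier $-ad + c\,d^2$ versus the exponential $e^{-td}-1$ is a matter of taste; both give the linear lower bound.

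One small slip to fix: the display
\begin{equation*}
S_k(D^2v)\big|_{d=0}=R_0 a^{k}\sigma_{k-1}(\kappa)+a^k\sigma_k(\kappa)\geq R_0\, a^k\,\mu_0
\end{equation*}
is not justified as written, since strict $(k-1)$-convexity gives $\sigma_j(\kappa)>0$ only for $j\leq k-1$; the term $\sigma_k(\kappa)$ can be negative. The correct bound is simply
\begin{equation*}
S_k(D^2v)\big|_{d=0} = a^k\,\sigma_k\bigl(\kappa_1,\dots,\kappa_{N-1},R_0\bigr)\geq a^k\,\tilde\beta_0
\end{equation*}
for some $\tilde\beta_0>0$: Lemma~\ref{lem:k-1_convex_bdy} gives $(\kappa_1,\dots,\kappa_{N-1},R_0)\in\Gamma_k$, hence $\sigma_k>0$, and compactness of $\partial\Omega$ together with continuity of the curvatures yields a uniform positive lower bound (this is the same observation as \eqref{kCB1_recall} in the paper). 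The rest of your argument, including the choices of $d_0$ and $a$, then goes through unchanged, with the positive constant $\tilde\beta_0$ in place of $R_0\mu_0$.
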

Before giving the proof, a pair of remarks are in order. Since $u(x) \geq 0 = -C_2d(x)$ for each $x \in \partial \Omega$, the lower bound in \eqref{barrier_estimate2} holds trivially there.  Moreover, having $u > 0$ at boundary points facilitates having a negative lower bound and the interesting case concerns $u \equiv 0$ on $\partial \Omega$.

\begin{proof}[Proof of Proposition \ref{prop:be2}:] Since $\partial \Omega \in C^2$, by Lemma \ref{lem:good_tubes}, there exists $\delta > 0$ such that $d \in \C^2(\overline{\Omega}_{\delta})$. Consider the comparison function $\varphi \in C^2(\overline{\Omega}_{\delta})$ defined by
\begin{equation}\label{def_test_phi}
	\mbox{$\varphi(x):= e^{-t d(x)} - 1 := g(d(x))$ \ \ with $t > 0$ sufficiently large.}
\end{equation}

\noindent  
{\bf Claim:} {\em For $t$ sufficiently large and $d_0$ sufficiently small, one has:}
	\begin{equation}\label{phi_k_convex}
	D^2 \varphi(x_0) \in \Sigma_k^{\circ} \ \ \text{\em for each} \ x_0 \in \Omega_{d_0};
	\end{equation}
	\begin{equation}\label{phi_bdy}
		\varphi \leq u \ \ \text{\em on} \ \partial \Omega_{d_0};
		\end{equation}
		\begin{equation}\label{phi_sub}
		S_k(D^2 \varphi) + \lambda \varphi |\varphi|^k> 0 \ \ \text{\em in} \ \Omega_{d_0}
		\end{equation}
	{\em That is, on a sufficiently small tubular domain $\Omega_{d_0}$, the function $\varphi$ is a $C^2$ strictly $k$-convex strict subsolution of the eigenvalue equation which is dominated by the supersolution $u$ on $\partial \Omega_{d_0}$.}

Given the claim, the $\Sigma_k$-admissible supersolution $u$ must satisfy
\begin{equation}\label{u_dominates_phi}
	\mbox{$u \geq \varphi$ on $\Omega_{d_0}$.} 
\end{equation}
Indeed, if \eqref{u_dominates_phi} were to be false, then $u - \varphi \in \LSC(\overline{\Omega}_{d_0})$ would have its (negative) minimum at some $ x_0 \in \Omega_{d_0}$ (using \eqref{phi_bdy}). By  Definition \ref{defn:sub_super}(b), one would then have
$$
S_k(D^2 \varphi(x_0)) + \lambda \varphi(x_0) |\varphi(x_0)|^k \leq 0 \ \ \text{or} \ \ D^2 \varphi(x_0) \not\in \Sigma_k^{\circ},
$$ 
which contradicts \eqref{phi_sub}-\eqref{phi_k_convex}. The relation \eqref{u_dominates_phi} gives the barrier estimate \eqref{barrier_estimate2} since 
$$
	u(x) \geq \varphi(x):= e^{-t d(x)} - 1   \geq -C_2 d(x), \ \ x \in \Omega_{d_0}
$$
if $C_2$ is chosen to satisfy $C_2 \geq t$. 

Thus it remains only to verify the Claim. We begin with the strict $k$-convexity of $\varphi$ claimed in \eqref{phi_k_convex}. Using Lemma \ref{lem:good_tubes} on $\varphi = g \circ d$ and calculating the needed derivatives of  $g(d) := e^{-td} - 1$, on $\Omega_{\delta}$ one has for each $j = 1, \ldots, k$
\begin{equation}\label{S_j}
	S_j(D^2 \varphi(x_0)) = t^j e^{-jtd(x_0)} \sigma_j \left( \frac{\kappa_1(y_0) }{1 - \kappa_1(y_0) d(x_0)} , \ldots, \frac{\kappa_{N-1}(y_0)}{1 - \kappa_{N-1}(y_0)d(x_0)} , t \right),
\end{equation}
where again we use a principal coordinate system based at $y_0 = y(x_0) \in \partial \Omega$ which realizes the distance to $x_0 \in \Omega_{\delta}$. Now, using the strict $(k-1)$-convexity of $\partial \Omega$, by Lemma \ref{lem:k-1_convex_bdy} there exists $R > 0$ and $\gamma_0 > 0$ so that
\begin{equation}\label{kCB1_recall}
\sigma_{j}(\kappa_1(y_0), \ldots , \kappa_{N-1}(y_0), R) \geq \beta_0 > 0 \ \ \text{for each} \ y_0 \in \partial \Omega \ \ \text{and each} \ j = 1, \ldots k.
\end{equation}
We have used the continuity of each $\sigma_j$ and the compactness of $\partial \Omega$ to pick up the positive lower bound $\beta_0$. Since each $\sigma_j$ (with $1 \leq j \leq k$) is increasing in $\Gamma_k$ with respect to $\lambda_N$, we can freely replace $R$ by any $t \geq R$ in \eqref{kCB1_recall}. Again by continuity and compactness, we can choose $d_0 \leq \delta$ so that for each $x_0 \in  \Omega_{d_0}$ and for each $j = 1, \ldots k$, one has
\begin{equation}\label{kCB2}
\sigma_{j}\left( \frac{\kappa_1(y_0) }{1 - \kappa_1(y_0) d(x_0)} , \ldots, \frac{\kappa_{N-1}(y_0)}{1 - \kappa_{N-1}(y_0)d(x_0)} , R \right) \geq \frac{\beta_0}{2} > 0.
\end{equation}
Indeed, with
\begin{equation}\label{kCB3}
\mu := \max_{y_0 \in \partial \Omega} \max_{1 \leq i \leq N - 1} |\kappa_i(y_0)|
\end{equation}
\begin{equation*}\label{kCB4}
p := \left( \frac{\kappa_1(y_0) }{1 - \kappa_1(y_0) d(x_0)} , \ldots, \frac{\kappa_{N-1}(y_0)}{1 - \kappa_{N-1}(y_0)d(x_0)} \right) \ \ \text{and} \ \  q:= \left(\kappa_1(y_0), \ldots , \kappa_{N-1}(y_0) \right) ,
\end{equation*}
by choosing $d_0 \leq 1/(2 \mu)$, for every $x_0 \in \overline{\Omega}_{d_0}$ one has
\begin{equation*}\label{kCB5}
\left| p - q \right|  \leq 2 \sqrt{N-1} \mu d_0,
\end{equation*}
which can be made as small as needed to ensure that \eqref{kCB2} follows from \eqref{kCB1_recall} (by taking $d_0$ even smaller if needed).

By choosing $t := R$ in 
\eqref{S_j} and using \eqref{kCB2}, one has
$$
 	S_j(D^2v(x_0)) \geq R^j e^{-jRd_0} \frac{\beta_0}{2} > 0 \ \ \ \text{for each} \ x_0 \in  \Omega_{d_0} \  \text{and} \ j = 1, \ldots k, 
$$
and hence the strict $k$-convexity of $\varphi$ on $\Omega_{d_0}$.

Next we verify the claim \eqref{phi_bdy} concerning boundary values. The claim is trivial on the outer boundary $\partial \Omega$ where $\varphi$ vanishes and $u \geq 0$. On the compact inner boundary (where $d(x) = d_0$), by reducing $d_0$ if need be, we can assume that $u \geq -1/2$ (since $u$ is lower semi-continuous and is non-negative on $\partial \Omega$). Hence it suffices to choose $t > 0$ large enough so that 
$$
\varphi = e^{-td_0} - 1 \leq -1/2 \leq u.
$$
Recall that we may freely increase $t \geq R$ without spoiling the $k$-convexity of $\varphi$ as noted above.

Finally, we need to verify the subsolution claim \eqref{phi_sub} which using the negativity of $\varphi$ is equivalent to
\begin{equation}\label{phi_sub1}
S_k (D^2 \varphi (x_0)) > \lambda |\varphi(x_0)|^k \ \ \text{for each} \ x_0 \in \Omega_{d_0}.
\end{equation}
Using \eqref{S_j} and \eqref{kCB2} with $j=k$, for each $x_0 \in \Omega_{d_0}$, we have
\begin{eqnarray}\label{phi_sub2}
S_k (D^2 \varphi (x_0)) & = & t^k e^{-kt d(x_0)} \sigma_k \left( \frac{\kappa_1(y_0) }{1 - \kappa_1(y_0) d(x_0)} , \ldots, \frac{\kappa_{N-1}(y_0)}{1 - \kappa_{N-1}(y_0)d(x_0)} , t \right) \nonumber \\ & > & t^k e^{-kt d(x_0)} \frac{\gamma_0}{2}.
\end{eqnarray}
Now, on $\Omega_{d_0}$ (where $d(x_0) < d_0$), we have
\begin{equation}\label{phi_sub3}
\lambda |\phi(x_0)|^k = \lambda \left( 1 - e^{-t d(x_0)}\right)^k < \lambda \left( 1 - e^{-t d_0}\right)^k.
\end{equation}
Combining \eqref{phi_sub2} with \eqref{phi_sub3}, we will have \eqref{phi_sub1} provided that 
$$
	t^k e^{-kt d(x_0)} \frac{\gamma_0}{2} > \lambda \left( 1 - e^{-t d_0}\right)^k,
$$
which holds if $d_0$ is chosen small enough.
\end{proof}

The final boundary estimate  we will need is similar to the preceding estimate and will be employed in the proof of the uniform H\"older regularity for a sequence of solutions tending to a principal eigenfunction, see Theorems \ref{thm:GHB} and \ref{thm:psi_1}.

\begin{proposition}\label{prop:be3}
	Let $\Omega$ be a bounded strictly $(k-1)$-convex domain and let $f \geq 0$ be a bounded continuous function on $\Omega$. Suppose that $u \in C(\overline{\Omega})$ is a $k$-convex solution of
$$
	\left\{\begin{array}{lc}
	S_k(D^2u) = f &\mbox{in}\ \Omega\\
	u = 0 &\mbox{on}\ \partial\Omega
	\end{array}
	\right. 
$$
	Then there exist $d_0 >0$ and $C_3 > 0$ such that
	\begin{equation}\label{barrier_estimate3}
	\mbox{  $u(x) \geq -C_3 d(x)$ \ for all $ x \in \Omega_{d_0}$,}
	\end{equation}
	where, as always, $\Omega_{d_0}:= \{ x\in \Omega: \ 0 < d(x) < d_0\}$.
\end{proposition}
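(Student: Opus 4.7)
The plan is to adapt the barrier construction of Proposition~\ref{prop:be2} to the inhomogeneous case, now comparing against the constant right hand side $c := \|f\|_\infty$. Specifically, I would take
$$
\varphi(x) := e^{-t d(x)} - 1, \qquad x \in \overline{\Omega_{d_0}},
$$
with $t > 0$ large and $d_0 > 0$ small to be chosen, and apply Theorem~\ref{thm:cp2} on the tubular domain $\Omega_{d_0}$. Since $u$ is a $k$-convex solution of $S_k(D^2 u) = f$ and $f \le c$, one immediately sees from Definition~\ref{defn:sub_super}(b) and Remark~\ref{rem:Sk_subsuper}(b) that $u$ is a $\Sigma_k$-admissible supersolution of $S_k(D^2 u) = c$. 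Thus it suffices to verify that $\varphi$ is a $C^2$ strictly $k$-convex strict subsolution with $S_k(D^2\varphi) > c$ on $\Omega_{d_0}$ and with $\varphi \le u$ on $\partial\Omega_{d_0}$; the elementary inequality $e^{-s}-1 \ge -s$ for $s \ge 0$ will then convert $\varphi \le u$ into $u \ge -t\, d = -C_3 d$.

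For the strict $k$-convexity and the $S_k$ lower bound, I would reuse verbatim the formula \eqref{S_j} and the estimate \eqref{kCB1_recall} from the proof of Proposition~\ref{prop:be2}, based on the strict $(k-1)$-convexity of $\partial\Omega$ through Lemma~\ref{lem:k-1_convex_bdy}: one obtains constants $R > 0$, $\beta_0 > 0$ and an upper bound $d_0^{(1)} > 0$ such that for every $t \ge R$ and every $d_0 \le d_0^{(1)}$,
$$
D^2 \varphi(x_0) \in \Sigma_k^{\circ} \quad \text{and} \quad S_k(D^2 \varphi(x_0)) \;\ge\; t^k e^{-k t d(x_0)} \frac{\beta_0}{2}, \qquad x_0 \in \Omega_{d_0},
$$
and the monotonicity of $\sigma_j$ (for $1 \le j \le k$) in its last argument on $\overline{\Gamma}_k$ allows $t$ to be increased above $R$ without destroying strict $k$-convexity. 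To secure the inner boundary condition I would shrink $d_0$ to some $d_0^{(2)} \le d_0^{(1)}$ so that, by continuity of $u$ and $u \equiv 0$ on $\partial\Omega$, one has $u \ge -1/2$ on $\overline{\Omega_{d_0}}$; then I would \emph{couple} the parameters by setting $t := \log(2)/d_0$, so that $\varphi \equiv -1/2 \le u$ on $\{d = d_0\}$ while trivially $\varphi = 0 = u$ on $\partial\Omega$. A further smallness assumption on $d_0$ ensures $t = \log(2)/d_0 \ge R$.

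With this coupling in place, the strict subsolution inequality reduces to the requirement
$$
\frac{(\log 2)^k \beta_0}{2^{k+1}}\cdot \frac{1}{d_0^k} \;>\; \|f\|_\infty,
$$
which is achieved by taking $d_0$ smaller still. Theorem~\ref{thm:cp2} applied on $\Omega_{d_0}$ then gives $\varphi \le u$ on $\Omega_{d_0}$, and therefore
$$
u(x) \;\ge\; \varphi(x) \;=\; e^{-t d(x)} - 1 \;\ge\; -t\, d(x) \;=\; -C_3\, d(x), \qquad x \in \Omega_{d_0},
$$
with $C_3 := \log(2)/d_0$. I expect the main subtlety to be the simultaneous management of the three competing requirements on $(t, d_0)$: strict $k$-convexity forces $t \ge R$ and $d_0$ small relative to the curvatures of $\partial\Omega$, the inner-boundary constraint demands $t d_0$ bounded below, and the strict subsolution inequality demands that $t^k e^{-k t d_0}$ dominate $\|f\|_\infty$. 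The binding $t d_0 = \log 2$ keeps $e^{-k t d_0}$ a fixed positive constant so that the factor $t^k \sim d_0^{-k}$ can be made arbitrarily large by shrinking $d_0$, resolving the trade-off.
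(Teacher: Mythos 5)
Your argument is mathematically correct and establishes the proposition, but it takes a different route from the paper's own proof. The paper compares $u$ with the logarithmic barrier $v := -M\log(1+td(\cdot))$, where the \emph{two} parameters $t$ (geometric) and $M$ (amplitude) are tuned independently: $d_0$ and $t$ are chosen using only $\partial\Omega$ so that $t/(1+td_0)\ge R$ and \eqref{kCB2} holds, while $M$ alone is increased to dominate both $\sup_\Omega f$ in the strict subsolution inequality and $\sup_\Omega(-u)$ on the inner boundary. You instead reuse the single-parameter exponential barrier $\varphi := e^{-td}-1$ from Proposition~\ref{prop:be2} and resolve the competing requirements by the coupling $t\,d_0=\log 2$, which freezes $e^{-ktd_0}$ at $2^{-k}$ and lets $t^k\sim d_0^{-k}$ beat $\|f\|_\infty$. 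That is a neat device, and I verified that the order of choice is consistent (all conditions are upper bounds on $d_0$, then $t:=\log(2)/d_0$).

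The genuine trade-off is in the dependence of the constants. Because $\varphi$ only ranges in $(-1,0]$, you handle the inner boundary by shrinking $d_0$ until $u\ge -\tfrac12$ on $\overline{\Omega}_{d_0}$, which makes $d_0$ --- and therefore $C_3=\log(2)/d_0$ --- depend on the \emph{modulus of continuity} of $u$ near $\partial\Omega$. The paper's extra amplitude parameter $M$ (or equivalently replacing your $\varphi$ by $C_0\varphi$ with $C_0\ge 2\sup_\Omega(-u)$, mimicking \eqref{pick_C0} in Proposition~\ref{prop:be1}) produces a $d_0$ depending only on $\Omega$ and $\sup_\Omega f$, and a $C_3$ depending only on $\Omega$, $\sup_\Omega f$ and $\sup_\Omega(-u)$. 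This sharper dependence is exactly what is used downstream: in Theorem~\ref{thm:GHB} the H\"older constant must depend only on $\Omega$, $\alpha$ and $\sup_\Omega(-u)$, and in Theorem~\ref{thm:psi_1} the boundary estimate is applied uniformly along a sequence of solutions whose moduli of continuity are \emph{a priori} unknown (indeed, the H\"older bound is the mechanism by which those moduli are controlled). So while your proof is valid for the proposition as worded, you should add the amplitude parameter $C_0$ to recover the quantitative form actually needed later; with that small modification the two approaches become essentially equivalent.
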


\begin{proof} We will show that we can choose $M, t > 0$ large enough and $d_0$ small enough so that
\begin{equation}\label{barrier_estimate4}
	\mbox{  $u(x) \geq -M \log{(1 + t d(x))}$ \ for all $ x \in \Omega_{d_0}$,}
\end{equation}
	from which \eqref{barrier_estimate3} follows by choosing $C_3 \geq Mt$.
	
	To establish \eqref{barrier_estimate4}, we will compare $u$ with $v:=  -M \log{(1 + t d(\cdot))}$ on a suitable tubular neighborhood $\Omega_{d_0} \subset \Omega_{\delta}$ where $\delta > 0$ is the parameter of Lemma \ref{lem:good_tubes} (for which $d \in C^2(\overline{\Omega}_{\delta})$). More precisely, we will choose the constants $M, t$ large enough and $d_0 \leq \delta$ small enough so that $v \in C(\overline{\Omega}_{\delta}) \cap C^2(\overline{\Omega}_{\delta})$ satisfies
\begin{equation}\label{be3_PDE1}
		D^2 v(x_0) \in \Sigma_k^{\circ} \ \ \text{and} \ \ S_k(v(x_0)) > \sup_{\Omega} f  \ \ \text{for each} \ x_0 \in \Omega_{d_0}
\end{equation}
	and 
\begin{equation}\label{be3_BC}
	u \geq v\ \ \text{on} \ \partial \Omega_{d_0}.
\end{equation}
	Since $u$ satisfies (in the  $\Sigma_k$-admissible viscosity sense)
\begin{equation}\label{be3_PDE2}
	S_k(D^2u) = f \leq \sup_{\Omega} f  \ \ \ \text{in} \ \Omega_{d_0} \subseteq \Omega,
\end{equation}
	by combining \eqref{be3_PDE1}-\eqref{be3_PDE2}, the comparison principle of Theorem \ref{thm:cp2} gives the desired conclusion $u \geq v$ on $\Omega_{d_0}$.
	
	We first choose $d_0 \leq \delta$ so that $v$ is strictly $k$-convex in $\Omega_{d_0}$ using the same argument employed in the proof of Proposition \ref{prop:be2}. Briefly, we first calculate to find that for each $x_0 \in \Omega_{\delta}$ and for each $j = 1, \ldots, k$ we have
\begin{equation}\label{S_j_for_v}
	S_j(D^2 v(x_0)) = \left( \frac{Mt}{1 + td} \right)^j \sigma_j \left( \frac{\kappa_1 }{1 - \kappa_1 d} , \ldots, \frac{\kappa_{N-1}}{1 - \kappa_{N-1}d} , \frac{t}{1 + td} \right),
\end{equation}
	with $\kappa_i = \kappa_i(y_0)$ and $d = d(x_0)$ in a principal coordinate system based at $y_0 \in \partial \Omega$ which realizes the distance to $x_0 \in \Omega_{\delta}$. Next, we use the strict $(k-1)$-convexity of $\partial \Omega$ to choose $d_0 \leq \delta$ small enough and $t$ large enough so that $\frac{t}{1 + td_0} \geq R$ (where $R$ is the parameter of Lemma \ref{lem:k-1_convex_bdy}) and for which there is a $\beta_0 > 0$ for which \eqref{kCB2} holds. Hence
\begin{equation}\label{be3_PDE3}
	S_j(D^2 v(x_0)) \geq \left( \frac{Mt}{1 + td_0} \right)^j \frac{\beta_0}{2} > 0,
\end{equation}
	 for each $x_0 \in \Omega_{d_0}$ and each $j = 1, \ldots, k$. This gives the needed $k$-convexity ( the first requirement in \eqref{be3_PDE1}).
	 
	 Next, using \eqref{be3_PDE3} with $j = k$, we find
\begin{equation}\label{be3_PD43}
	S_k(D^2 v(x_0)) \geq  \left( \frac{Mt}{1 + td_0} \right)^k \frac{\beta_0}{2} > \sup_{\Omega} f \ \ \text{for each} \ x_0 \in \Omega_{d_0}, 
\end{equation} 
	by choosing $M > 0$ sufficiently large. This gives the second needed condition in \eqref{be3_PDE1}.
	
	Finally, we verify the needed boundary inequality \eqref{be3_BC} on $\partial \Omega_{d_0} = \partial \Omega \cup \Sigma_{d_0}$ where $\Sigma_{d_0} := \{ x \in \Omega: \ d(x) = d_0\}$. On $\partial \Omega$ one has $u = 0 = v$. For $\Sigma_0$, we need to show
\begin{equation}\label{be3_BC2}
		-v(x) = M \log{(1 + t d_0)} \geq  - u(x) \ \ \text{for each} \ x \in \Sigma_{d_0},	\end{equation}
	where $u \in C(\overline{\Omega})$ is bounded and $u \leq 0$ on $\Omega$ since $u$ is $k$-convex and vanishes on the boundary (see Corollary \ref{cor:PEC}). Clearly, it suffices to choose $M$ so that
	\begin{equation}\label{choseC_3}
		M \geq \frac{1}{\log (1 + t d_0)} \sup_{\Omega} (-u)
	\end{equation}
in order to have \eqref{be3_BC2}.
\end{proof}

\section{Characterization of the principal eigenvalue}

In all that follows, $\Omega \subset \R^N$ will be a bounded open domain with $C^2$ boundary which is strictly $(k-1)$-convex in the sense of Definition \ref{defn:k-1_convex_bdy}. Denote by 
\begin{equation}\label{def:Phi_k}
\Phi_k^{-}(\Omega) := \{ \psi \in \USC(\Omega): \psi \ \text{is $k$-convex and negative in} \ \Omega\},
\end{equation}
where the notion of $k$-convexity is that of Definition \ref{defn:k_convex}. Notice  
that since $\psi$ is bounded from above (by zero) on $\Omega$ one can extend $\psi$ to a $\USC(\overline{\Omega})$ function in a canonical way by letting
\begin{equation}\label{extend_to_boundary}
	\psi(x_0) := \limsup_{\substack{x \to x_0 \\ x \in \Omega}} u(x), \ \ \text{for each} \ x_0 \in \partial \Omega.
\end{equation}
In particular, since $\psi < 0$ on $\Omega$ this extension also satisfies
\begin{equation}\label{extend}
	\psi \leq 0 \ \ \text{on} \ \partial \Omega.
\end{equation}
We will freely make use of this extension so that   Proposition \ref{prop:be1} (Hopf's Lemma) applies to give the boundary estimate \eqref{barrier_estimate1} for the canonical extension of $\psi \in \Phi_k^{-}(\Omega)$. 

The following definition gives a candidate for the principal eigenvalue associated to a negative $k$-convex eigenfunction of the $k$-Hessian $S_k(D^2u) = \sigma_k(\lambda(D^2u))$.
\begin{defn}\label{defn:PEV} For each $k \in \{1, \ldots, N\}$ fixed, define 
\begin{equation} \label{def:peval1}
\lambda_1^{-}(S_k, \Sigma_k) := \sup \Lambda_k
\end{equation}
where 
\begin{equation}\label{def:peval2}
\Lambda_k:= \{ \lambda \in \R: \ \exists \psi \in \Phi_k^{-}(\Omega)\ \text{with} \  S_k(D^2 \psi) + \lambda \psi |\psi|^{k-1} \geq 0 \ \text{in} \ \Omega \}.
\end{equation}
The meaning of the differential inequality in \eqref{def:peval2} is in the viscosity sense; that is,
for each $x_0 \in \Omega$ and for each $\varphi$ which is $C^2$ near $x_0$ one has that
\begin{equation}\label{kc_test1}
\mbox{ $\psi - \varphi$ has a local maximum in $x_0 \ \ \Rightarrow \ \ S_k(D^2 \varphi(x_0)) +\lambda \psi(x_0) |\psi(x_0)|^{k-1} \geq 0$. }
\end{equation}
Moreover, since $\psi \in \Phi_k^-(\Omega)$ is $k$-convex,  $\psi$ is a $\Sigma_k$-admissible subsolution of the PDE in the sense of Definition \ref{defn:sub_super} (a), whose canonical extension to the boundary \eqref{extend_to_boundary} is admissible for Proposition \ref{prop:be1} (as noted above).
\end{defn}

About the definition, a few elementary remarks are in order which we record in the following lemma. 

\begin{lem}\label{lem:PEV} Let $\lambda_1^-(S_k, \Sigma_k)$ and $\Lambda_k$ be as in Definition \ref{defn:PEV}. Then the 
following facts hold.
	\begin{itemize}
		\item[(a)] $(- \infty, \lambda_1^-(S_k, \Sigma_k))\subset \Lambda_k$, or equivalently, if $\lambda < \lambda_1^-(S_k, \Sigma_k)$ then there exists $\psi \in \USC(\Omega)$ which is $k$-convex and negative in $\Omega$ and satisfies
		 \begin{equation}\label{PEVequation}
		S_k(D^2 \psi) + \lambda \psi |\psi|^{k-1} \geq 0 \ \ \text{in} \ \Omega.
		\end{equation}
		\item[(b)] If $\Omega$ (which is bounded) is contained in $B_R(0)$ then one has the estimate
		\begin{equation}\label{PEVest1}
			\lambda_1^-(S_k, \Sigma_k) \geq C_{N,k} R^{-2k} \quad \text{where} \ \ C_{N,k} = \left( \begin{array}{c} N \\ k \end{array} \right) := \frac{N!}{k! (N - k)!}.
		\end{equation} 
		In particular, $\lambda_1^-(S_k,\Sigma_k)$ is positive.    
	\end{itemize}                                                       
\end{lem}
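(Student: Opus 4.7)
The plan is to deduce both parts directly from the definition, using only monotonicity of the reaction term in $\lambda$ for part (a) and an explicit quadratic barrier for part (b).

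For part (a), I would invoke the characterizing property of the supremum: for any $\lambda < \lambda_1^-(S_k,\Sigma_k)$ there exists $\mu \in \Lambda_k$ with $\lambda < \mu$, together with a witness $\psi \in \Phi_k^-(\Omega)$ satisfying $S_k(D^2\psi) + \mu\psi|\psi|^{k-1} \geq 0$ in the viscosity sense. The claim is that the same $\psi$ also witnesses $\lambda \in \Lambda_k$. The key point is that, since $\psi < 0$ on $\Omega$, one has $\psi|\psi|^{k-1} = -|\psi|^k \leq 0$ pointwise, so $(\lambda - \mu)\psi|\psi|^{k-1} \geq 0$. Testing at any $C^2$ upper test function $\varphi$ with $\psi - \varphi$ having a local maximum at $x_0 \in \Omega$, this yields
$$
S_k(D^2\varphi(x_0)) + \lambda\psi(x_0)|\psi(x_0)|^{k-1} \geq S_k(D^2\varphi(x_0)) + \mu\psi(x_0)|\psi(x_0)|^{k-1} \geq 0,
$$
so the subsolution property passes from $\mu$ down to $\lambda$ without modifying $\psi$.

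For part (b), I would exhibit an explicit classical subsolution. Take $\psi(x) := |x|^2 - R^2$. Since $\Omega$ and $B_R(0)$ are open with $\Omega \subset B_R(0)$, every $x \in \Omega$ satisfies $|x| < R$, and hence $\psi < 0$ on $\Omega$; moreover $D^2\psi \equiv 2I$ has constant eigenvalues $(2, \ldots, 2) \in \Gamma^+ \subset \Gamma_k$, so $\psi \in C^2(\Omega) \cap \Phi_k^-(\Omega)$. A direct calculation yields $S_k(D^2\psi) = \sigma_k(2, \ldots, 2) = 2^k \binom{N}{k} = 2^k C_{N,k}$, and the bound $|\psi(x)| = R^2 - |x|^2 \leq R^2$ gives
$$
S_k(D^2\psi) + \lambda \psi|\psi|^{k-1} = 2^k C_{N,k} - \lambda |\psi|^k \geq 2^k C_{N,k} - \lambda R^{2k} \geq 0
$$
for every $\lambda \in [0, 2^k C_{N,k} R^{-2k}]$. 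Taking the supremum, $\lambda_1^-(S_k,\Sigma_k) \geq 2^k C_{N,k} R^{-2k} \geq C_{N,k} R^{-2k} > 0$, which is the desired estimate together with positivity.

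Neither step presents a genuine obstacle: part (a) is pure monotonicity in $\lambda$ relying on the strict negativity of $\psi$, and part (b) amounts to computing $S_k$ on a multiple of the identity. The only minor care needed is that the open inclusion $\Omega \subset B_R(0)$ yields the strict inequality $|x| < R$ throughout $\Omega$, guaranteeing $\psi < 0$ there (and not merely $\psi \leq 0$).
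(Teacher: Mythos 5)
Your proof is correct and takes essentially the same approach as the paper in both parts: part (a) is the same monotonicity-in-$\lambda$ argument exploiting $\psi|\psi|^{k-1} \leq 0$, and part (b) is the same quadratic barrier $|x|^2 - M$ with $D^2\psi = 2I$. One small observation: the paper takes $M = 2R^2$ (so that $\psi < 0$ on $\overline{\Omega}$), which yields exactly the stated bound $C_{N,k}R^{-2k}$, whereas your tighter choice $M = R^2$ (which suffices, since $\Phi_k^-(\Omega)$ only requires negativity in the open set $\Omega$) actually produces the sharper estimate $\lambda_1^-(S_k,\Sigma_k) \geq 2^k C_{N,k} R^{-2k}$, which of course implies \eqref{PEVest1}.
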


\begin{proof}
	For the part (a), we first claim that if $\lambda \in \Lambda_k$ then $(-\infty, \lambda] \subset \Lambda_k$. By the definition of $\Lambda_k$, there is $\psi \in \Phi^-(\Omega)$
	as defined in \eqref{def:Phi_k} which satisfies \eqref{PEVequation}. If $\lambda^* < \lambda$ then this $\psi \in \Phi^-(\Omega)$ satisfies \eqref{PEVequation} with $\lambda^*$ in place of $\lambda$. Indeed, for each $x_0 \in \Omega$ and each
	$\varphi$ which is $C^2$ near $x_0$ one has \eqref{kc_test1} and hence
$$
S_k(D^2 \varphi(x_0)) +\lambda^* \psi(x_0) |\psi(x_0)|^{k-1} + (\lambda - \lambda^*) \psi(x_0) |\psi(x_0)|^{k-1} \geq 0,
$$
but the second term is negative and hence the claim. 

Now, if $\lambda < \lambda_1^-(S_k, \Sigma_k)$ then by the definition of $\lambda_1^-(S_k, \Sigma_k)$ there must exist $\lambda^* = \lambda + \veps^*$ between $\lambda$ and  $\lambda_1^-(S_k, \Sigma_k)$ which belongs to $\Lambda_k$ and hence $\lambda + \veps \in \Lambda_k$ for each $\veps \in [0, \veps*]$ by the claim proved above, which completes the proof of part (a).

For part (b), consider the convex (and hence $k$-convex) function $\psi(x):= |x|^2 - M$ with $M > R^2$ so that $\psi < 0 $ on $\overline{\Omega}$ if $\Omega \subset B_R(0)$. One has $D^2 \psi(x) = 2I$ for each $x \in \Omega$ and hence
$$
	S_k(D^2 \psi(x)) + \lambda \psi(x) |\psi(x)|^{k-1} = 2^k C_{N,k} - \lambda (2R^2 - |x|^2)^k.
$$
Since $2R^2 - |x|^2 > 0$ by hypothesis, for $\lambda > 0$ one has
$$
S_k(D^2 \psi(x) + \lambda \psi(x) |\psi(x)|^{k-1} \geq 2^k \left[C_{N,k} - \lambda R^{2k} \right] \geq 0,
$$
provided $\lambda R^{2k} \leq C_{N,k}$. The claim \eqref{PEVest1} follows. 
\end{proof}

We are now ready for the main result of this section, which we state in the nonlinear case i.e. $k > 1$. 

\begin{thm}\label{thm:MPC}
Let $k \in \{2, \ldots, N\}$ and let $\Omega$ be a strictly $(k-1)$-convex domain in $\R^N$. For every $\lambda < \lambda_1^{-}(S_k, \Sigma_k)$ and for every $u \in \LSC(\overline{\Omega})$ which is $\Sigma_k$-admissible supersolution of
\begin{equation}\label{EVsuperEq}
S_k(D^2u) + \lambda u|u|^{k-1} = 0 \ \ {\rm in} \ \Omega,
\end{equation}
one has the following minimum principle 
\begin{equation}\label{EVMinPrinc}
u \geq 0 \ \ {\rm on} \ \partial \Omega \ \ \Rightarrow \ \ u \geq 0 \ \ {\rm in} \  \Omega
\end{equation}
\end{thm}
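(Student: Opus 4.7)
The plan is to follow the Berestycki-Nirenberg-Varadhan template, adapted to fully nonlinear $k$-Hessians as in Birindelli-Demengel \cite{BD06, BD07} and Birindelli-Galise-Ishii \cite{BGI18}. The cases $\lambda \leq 0$ are elementary: for $\lambda = 0$ apply Theorem \ref{thm:cp1} to the $\Sigma_k$-admissible pair $(v \equiv 0, u)$; for $\lambda < 0$ test the supersolution at an interior minimum $\bar x \in \overline\Omega$ of $u$ (which lies in $\Omega$ because $u \geq 0$ on $\partial \Omega$) with the constant lower test function $\phi \equiv u(\bar x)$: since $D^2 \phi(\bar x) = 0 \in \Sigma_k$, the admissible supersolution inequality yields $\lambda u(\bar x) |u(\bar x)|^{k-1} \leq 0$, which with $\lambda < 0$ forces $u(\bar x) \geq 0$. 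Henceforth assume $\lambda \in (0, \lambda_1^-(S_k, \Sigma_k))$.

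Fix $\lambda' \in (\lambda, \lambda_1^-)$ and by Lemma \ref{lem:PEV}(a) select $\psi \in \Phi_k^-(\Omega)$ with $S_k(D^2 \psi) + \lambda' \psi |\psi|^{k-1} \geq 0$ in the admissible viscosity sense. Since $\psi < 0$ in $\Omega$, the function $\psi$ --- and by positive $k$-homogeneity every dilate $t\psi$ with $t > 0$ --- is a \emph{strict} $\Sigma_k$-admissible subsolution of the $\lambda$-equation, with
$$S_k(D^2(t\psi)) + \lambda (t\psi) |t\psi|^{k-1} \;\geq\; t^k (\lambda' - \lambda) |\psi|^k \;>\; 0 \quad \text{in } \Omega.$$
Extend $\psi$ canonically to $\USC(\overline\Omega)$ with $\psi \leq 0$ on $\partial \Omega$. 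Proposition \ref{prop:be1} applied to $\psi$ and Proposition \ref{prop:be2} applied to the supersolution $u$ (where strict $(k-1)$-convexity of $\partial\Omega$ and $\lambda \geq 0$ are used) furnish constants $C_1, C_2, d_0 > 0$ with $\psi(x) \leq -C_1 \, d(x)$ and $u(x) \geq -C_2 \, d(x)$ on the tubular neighborhood $\Omega_{d_0}$.

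Argue by contradiction: assume $u(x^\ast) < 0$ for some $x^\ast \in \Omega$ and slide dilates of $\psi$ against $u$ by setting
$$t_\ast := \inf\bigl\{ t > 0 : u \geq t \psi \text{ on } \Omega \bigr\}.$$
The two one-sided boundary estimates together with the strong maximum principle (Theorem \ref{thm:SMP}, which yields $\psi \leq -\delta_0 < 0$ on $\{d(\cdot) \geq d_0\}$) give $0 < t_\ast < \infty$, and $w := u - t_\ast \psi \in \LSC(\overline \Omega)$ is $\geq 0$ on $\Omega$ with $\inf_{\Omega} w = 0$. \emph{The main obstacle} is to place the minimizing point in the interior of $\Omega$; otherwise a minimizing sequence $x_n \to \bar y \in \partial\Omega$ would force $u(\bar y) = 0 = \psi(\bar y)$, a boundary touching that must be excluded by a refined Hopf-type analysis at $\bar y$ exploiting the strict subsolution margin $(\lambda' - \lambda) |\psi|^k > 0$ available to $\psi$ (essentially, this margin promotes Proposition \ref{prop:be1} to a quantitatively stronger inward-normal bound on $\psi$ at $\bar y$ which is incompatible with the supersolution bound on $u$ unless $t_\ast C_1 > C_2$, in which case the global lower bound $w(x) \geq (t_\ast C_1 - C_2) d(x)$ on $\Omega_{d_0}$ drives the infimum into the interior).

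Once an interior minimizer $\bar x \in \Omega$ with $u(\bar x) = t_\ast \psi(\bar x) < 0$ is secured, apply the Crandall-Ishii doubling of variables to the pair (supersolution $u$, strict subsolution $t_\ast \psi$): maximize $(x,y) \mapsto t_\ast \psi(x) - u(y) - |x-y|^2/\varepsilon$ over $\overline\Omega \times \overline\Omega$, producing maximizers $(x_\varepsilon, y_\varepsilon) \to (\bar x, \bar x)$ and comparable jets $(p_\varepsilon, X_\varepsilon) \in \overline{J}^{2,+}(t_\ast \psi)(x_\varepsilon)$, $(p_\varepsilon, Y_\varepsilon) \in \overline{J}^{2,-} u(y_\varepsilon)$ with $X_\varepsilon \leq Y_\varepsilon$. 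The strict subsolution property places $X_\varepsilon \in \Sigma_k$ with $S_k(X_\varepsilon) + \lambda (t_\ast \psi(x_\varepsilon)) |t_\ast \psi(x_\varepsilon)|^{k-1} \geq t_\ast^k (\lambda' - \lambda) |\psi(x_\varepsilon)|^k$; the positivity property (Lemma \ref{lem:S_Theta_k}(b)) then forces $Y_\varepsilon \in \Sigma_k$ since $Y_\varepsilon = X_\varepsilon + (Y_\varepsilon - X_\varepsilon)$ with $Y_\varepsilon - X_\varepsilon \geq 0$, so the admissibility clause of the supersolution definition for $u$ is vacuous and $S_k(Y_\varepsilon) + \lambda u(y_\varepsilon) |u(y_\varepsilon)|^{k-1} \leq 0$. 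Subtracting and using the degenerate ellipticity $S_k(Y_\varepsilon) \geq S_k(X_\varepsilon)$ (Lemma \ref{lem:S_Theta_k}(d)), the limit $\varepsilon \to 0^+$ makes the $\lambda$-dependent zero-order terms cancel (because $u(y_\varepsilon), t_\ast \psi(x_\varepsilon) \to u(\bar x) = t_\ast \psi(\bar x)$) while the strict excess $t_\ast^k (\lambda' - \lambda) |\psi(\bar x)|^k > 0$ persists, yielding the contradiction $0 \leq -t_\ast^k (\lambda' - \lambda) |\psi(\bar x)|^k < 0$.
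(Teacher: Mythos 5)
Your plan is recognizably the same strategy as the paper's (BNV-type argument built on the Hopf estimate of Proposition \ref{prop:be1}, the supersolution lower bound of Proposition \ref{prop:be2}, and Ishii's doubling of variables), but you diverge at one structural decision, and that is precisely where the proof has a genuine gap. You work at the \emph{endpoint}: you set $t_\ast:=\inf\{t>0:\ u\ge t\psi \text{ on } \Omega\}$ and then need an interior touching point for $w:=u-t_\ast\psi\ge 0$. You candidly flag this as ``the main obstacle,'' but the fix you sketch does not close it. If $t_\ast C_1 > C_2$ the estimate $w\ge (t_\ast C_1 - C_2)\,d$ on $\Omega_{d_0}$ does confine the infimum to a compact interior set; but nothing in your argument rules out $t_\ast C_1\le C_2$, in which case the lower bound is vacuous. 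The claim that the strict subsolution margin $(\lambda'-\lambda)|\psi|^k>0$ ``promotes'' Proposition \ref{prop:be1} to a quantitatively stronger inward-normal bound is not substantiated: the constant $C_1$ produced by Proposition \ref{prop:be1} is determined by the sup of $\psi$ on an inner spherical shell and the radial barrier, not by how far $\psi$ is from equality in the subsolution inequality, so the excess does not enter the Hopf constant. As written, the touching point could degenerate to $\partial\Omega$ and your final Ishii computation has no interior point to anchor it.

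The paper avoids this endpoint problem entirely. Setting $\gamma':=\sup_\Omega(u/\psi)$ (your $t_\ast$), it fixes an arbitrary $\gamma\in(0,\gamma')$ rather than $\gamma=\gamma'$. Then $\sup_\Omega(\gamma\psi-u)>0$ automatically (otherwise $\gamma'\le\gamma$), and since $\gamma\psi-u\le 0$ on $\partial\Omega$, the positive maximum is forced into the interior without any refined boundary analysis (Lemma \ref{lem:m_j}). The doubling at $(x_j,y_j)$ together with the positivity property and degenerate ellipticity then yields, in the limit, the quantitative inequality $\wt\lambda\gamma^k|\psi(\wt x)|^k\le\lambda|u(\wt x)|^k$, i.e.\ $(\wt\lambda/\lambda)\gamma^k\le(\gamma')^k$; letting $\gamma\nearrow\gamma'$ contradicts $\wt\lambda>\lambda$. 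Your final arithmetic at the hypothesized interior touching point is fine (the excess $t_\ast^k(\lambda'-\lambda)|\psi(\bar x)|^k$ would indeed survive the cancellation of the zero-order terms), and your treatment of $\lambda\le 0$ is correct, but the ``main obstacle'' step is not an obstacle you have cleared --- it is an unproved lemma. To repair the proposal with minimal change, replace the endpoint $t_\ast$ with an arbitrary $t<t_\ast$, run the doubling there, and obtain the contradiction by letting $t\uparrow t_\ast$, which is exactly what Steps~2 and 3 of the paper's proof do.
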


Before giving the proof, a pair of remarks are in order. 

\begin{rem}\label{rem:lambda_negative} If $\lambda \leq 0$, then the gradient-free equation \eqref{EVsuperEq} is {\em proper elliptic} on the constraint set $\R \times \Sigma_k$ and the maximum/minimum principle for $(\R \times \Sigma_k)$-admissible viscosity subsolutions/supersolutions of \eqref{EVsuperEq} follows from \cite{CHLP19} (see section 11.1). Hence, we will restrict attention to the interesting case
	\begin{equation}\label{lambda_range}
	0 < \lambda < \lambda_1^{-}(S_k, \Sigma_k).
	\end{equation}
	\end{rem}

\begin{proof}[Proof of Theorem \ref{thm:MPC}] We argue by contradiction. Assume that 
	\begin{equation}
	\mbox {there exists $x \in \Omega$ such that $u(x) < 0$}
	\end{equation}
	and so $u \in \LSC(\overline{\Omega})$ will have a negative minimum on $\overline{\Omega}$ in some interior point  $\bar{x} \in \Omega$. Let $\psi \leq 0$ on $\partial \Omega$ be a $\Sigma_k$-admissible subsolution of
\begin{equation}\label{MPC2}
	S_k(D^2 \psi) + \wt{\lambda} \psi |\psi|^{k-1} \geq 0 \ \ \text{in} \ \Omega \ \ \text{for a fixed} \ \wt{\lambda} \in (\lambda, \lambda_1^{-}(S_k, \Sigma_k)).
\end{equation}
We will compare $u$ with $\gamma \psi$ where
	\begin{equation}\label{MPC1}
	\gamma \in \left( 0, \gamma':= \sup_{\Omega} \frac{u}{\psi} \right)
	\end{equation}
is to be suitably chosen and $\psi \in \USC(\overline{\Omega})$ is $k$-convex, negative in $\Omega$. Notice that such values of $\wt{\lambda} > 0$ exist by \eqref{lambda_range} and that such a $\psi$ exists by Lemma \ref{lem:PEV} (a), where we take the canonical $\USC$ extension to the boundary of  \eqref{extend_to_boundary} so that \eqref{extend} also holds for $\psi$.\\ 
Notice also that if $\psi$ solves \eqref{MPC2} then so does $\gamma \psi$. Indeed, for each $x_0 \in \Omega$, if $\gamma \psi - \varphi$ has a local max in $x_0$ then $\psi - \frac{1}{\gamma} \varphi$ with $\gamma > 0$ does too and hence
\begin{equation}\label{MPC3}
\gamma^{-k}	S_k(D^2 \varphi(x_0)) + \wt{\lambda} \psi(x_0) |\psi(x_0)|^{k-1} \geq 0,
\end{equation} 
which gives \eqref{MPC2} for $\gamma \psi$ by multiplying \eqref{MPC3}  by $\gamma^k > 0$.

\noindent{\bf  Step 1:} {\em Show that $\gamma' > 0$ defined in \eqref{MPC1} is finite: that is, one has}
\begin{equation}\label{step1}
	\sup_{\Omega} \frac{u}{\psi} < + \infty.
\end{equation}

\vspace{1ex} We begin by noting that $\psi < 0$ on $\Omega$ and we have assumed that $u$ has a negative minimum at $\bar{x} \in \Omega$ so that the ratio is positive in $\bar{x}$. Near the boundary, we make use of the boundary estimates of Proposition \ref{prop:be1} for $\psi$ and Proposition \ref{prop:be2} for $u$ to say that there exist $C_1, C_2 > 0$ such that
\begin{equation}\label{BE1}
\psi(x)\leq -C_1 \, d(x) \ \text{for all} \ x \in \Omega_{\delta/2}  
\end{equation}
and
\begin{equation}\label{BE2}
u(x) \geq -C_2 \, d(x) \ \text{for all} \ x \in \Omega_{d_0}, 
\end{equation}
where $\delta > 0$ is the parameter of Lemma \ref{lem:good_tubes} defining a good tubular neighborhood of $\partial \Omega \in C^2$ and $d_0 \leq \delta$ depends on $\mu$ (as defined in \eqref{kCB3}, which bounds the absolute values of the principal curvatures of $\partial \Omega$), the monotonicity properties of $\sigma_j$ for $j \leq k$ on $\overline{\Gamma}_k$ and their moduli of continuity. Hence, by picking
$$
	\rho \leq \min \{ d_0, \delta/2 \}
$$
and recalling that $-\psi > 0$ on $\Omega$, we can use both \eqref{BE1} and \eqref{BE2} on $\Omega_{\rho}$ to find 
$$
\frac{u(x)}{\psi(x)} = \frac{-u(x)}{-\psi(x)} \leq \frac{C_2 \, d(x)}{C_1 \, d(x)} = \frac{C_2}{C_1} \ \ \text{for all} \ x \in \Omega_{\rho}; 
$$
that is
\begin{equation}\label{step1A}
\sup_{\Omega_{\rho}} \frac{u}{\psi} \leq \frac{C_2}{C_1} < +\infty.
\end{equation}

Now, on the compact set $K := \overline{\Omega \setminus \Omega_{\rho}}$ where $-\psi \in \LSC(K)$ and positive and $-u \in \USC(K)$ one has the existence of $\wt{C}_1 > 0$ and $\wt{C}_2$ such that
$$
-\psi(x) \geq \wt{C}_1 > 0 \ \ \text{and} \ \ -u(x) \leq \wt{C}_2 \ \ \text{for each} \ x \in K
$$
to find
\begin{equation}\label{step1B}
\sup_{K} \frac{u}{\psi} \leq \frac{\wt{C}_2}{\wt{C}_1} < +\infty.
\end{equation}
Combining \eqref{step1A} with \eqref{step1B} gives the needed \eqref{step1}.

\noindent{\bf  Step 2:} {\em Reduce the proof to showing that there exists $\wt{x} \in \Omega$ such that $u(\wt{x}) < 0$ and}
\begin{equation}\label{x_tilde_ineq1}
	\lambda |u(\wt{x})|^k \geq \gamma^k \wt{\lambda} |\psi(\wt{x})|^k.
	\end{equation}
	
Indeed, recalling that $u(\wt{x}), \psi(\wt{x}) < 0$, $0 < \lambda < \wt{\lambda} < \lambda_1^{-}$ and $\gamma^{\prime} := \sup_{\Omega}(u/\psi)$, from \eqref{x_tilde_ineq1} one finds
	\begin{equation}\label{x_tilde_ineq2}
	\frac{\wt{\lambda}}{\lambda}\gamma^k \leq \left( \frac{ u(\wt{x})}{\psi(\wt{x})} \right)^k \leq  (\gamma')^k.
	\end{equation}
Now, choose the free parameter $\gamma \in (0, \gamma^{\prime})$ to satisfy
	\begin{equation}\label{pick_gamma}
	\gamma > \left( \frac{\lambda}{\wt{\lambda}} \right)^{1/k} \gamma^{\prime},
	\end{equation} 
	which can be done since $\gamma \in (0, \gamma^{\prime})$ and $(\lambda/\wt{\lambda})^{1/k} < 1$. Raising the inequality \eqref{pick_gamma} to power $k$ and multiplying by $\wt{\lambda} > 0$ gives a contradiction to the inequality \eqref{x_tilde_ineq2}. This completes the proof of the theorem, modulo showing that such an $\wt{x}$ exists.
	
\noindent{\bf  Step 3:} {\em Exhibit $\wt{x} \in \Omega$ such that $u(\wt{x}) < 0$ and \eqref{x_tilde_ineq1} holds.}
	
In order to find $\tilde{x} \in \Omega$ such that \eqref{x_tilde_ineq1} holds when comparing $u$ to  $\gamma \psi$, we make use of the classical viscosity device of looking at the maximum values of the family of upper semicontinuous functions defined by doubling variables and with an increasing (in $j \in \N$) quadratic penalization 
\begin{equation}\label{Psi_j}
\Psi_j(x,y):= \gamma \psi(x) - u(y) - \frac{j}{2}|x-y|^2, \ \ (x,y) \in \overline{\Omega} \times \overline{\Omega}, \ j \in \N.
\end{equation}
For simplicity of notation, we will suppress the free parameter $\gamma \in (0, \gamma^{\prime})$ in the notation for $\Psi_j$ (as well in certain $\gamma$-dependent quantities below), thinking of $\gamma \in (0, \gamma^{\prime})$ as arbitrary, but fixed. 
  
First, notice that for each $j \in \N$, $\Psi_j \in \USC(\overline{\Omega} \times \overline{\Omega})$ will have a maximum value 
\begin{equation}\label{Mj}
M_j := \max_{(x,y) \in \overline{\Omega} \times \overline{\Omega} } \Psi_j(x,y) < +\infty.
\end{equation}

\noindent  
{\bf Claim:} {\em For each $j \in \N$, the maximum $M_j$ defined in \eqref{Mj} is positive.}

For each fixed $\gamma \in (0, \gamma^{\prime})$, we will show that
\begin{equation}\label{v_function}
	\Psi_j (x,x) := \gamma \psi(x) - u(x).
	\end{equation} 
must have a positive value in the interior of $\Omega$, and hence the claim. Assume to the contrary that $\Psi_j (x,x) \leq 0$ on $\Omega$; that is, for each $x \in \Omega$, assume that 
$$
\gamma \psi(x) \leq u(x) \ \ \Leftrightarrow \ \ \frac{u(x)}{\psi(x)} \leq \gamma,
$$
since $\psi < 0$ on $\Omega$. This implies that $\gamma^{\prime}$ which is the sup of $u/\psi$ satisfies $\gamma^{\prime}\leq\gamma$.
However this is a contradiction since $\gamma < \gamma^{\prime}$, which completes the claim.

Hence, using the claim, for each fixed $\gamma \in (0, \gamma^{\prime})$, there exists $\bar{x} \in \Omega$ such that
\begin{equation}\label{Mj_bound}
	M_j:= \max_{\overline{\Omega} \times \overline{\Omega}} \Psi_j  \geq \Psi_j(\bar{x}, \bar{x}) = \gamma \psi(\bar{x}) - u(\bar{x}) :=\bar{m} > 0, \ \forall \, j \in \N.
\end{equation}
Notice that the maximum values $M_j$ decrease as $j$ increases so that
\begin{equation}\label{M_infty}
	M_{\infty} := \lim_{j \to + \infty} M_j = \inf_{j \in \N} M_j \geq \bar{m} > 0.
\end{equation}
Using the finiteness of the limit $M_{\infty}$ and the fact that
$u \geq 0$ and $\psi \leq 0 $ on $\partial \Omega$ one has the following standard facts (see, for example, Lemma 3.1 of \cite{CIL92}). 

\begin{lem}\label{lem:m_j}  For each $j \in \N$ consider any pair $(x_j, y_j) \in \overline{\Omega} \times \overline{\Omega}$ such that
\begin{equation}\label{IL5}
	M_j :=\max_{\overline{\Omega} \times \overline{\Omega}} \Psi_j = \Psi_j(x_j, y_j).
\end{equation}
One has
\begin{equation}\label{IL6}
	\lim_{j \to +\infty} j |x_j - y_j|^2 = 0 \ \ \text{and hence} \ \ (x_j - y_j) \to 0 \ \text{as} \ j \to +\infty;
\end{equation}
\begin{equation}\label{IL7}
	(x_j, y_j) \in \Omega \times \Omega \ \ \text{for all} \ j \ \text{sufficiently large};
\end{equation}
and for any accumulation point $\tilde{x}$ of the bounded sequence $\{x_j\}_{j \in \N}$ one has
\begin{equation}\label{IL8}
	0 < \bar{m} \leq M_{\infty} = \gamma \psi(\tilde{x}) - u(\tilde{x}) = \max_{(x,y) \in \overline{\Omega} \times \overline{\Omega}} (\gamma \psi(x) - u(y))
\end{equation}
and hence $\tilde{x} \in \Omega$ since $\gamma \psi, -u \leq 0$ on $\partial \Omega$.
	\end{lem}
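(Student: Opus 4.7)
Lemma~\ref{lem:m_j} is a variant of the classical double-variable penalization lemma (see Lemma~3.1 of \cite{CIL92}); my plan is to follow its standard argument while tracking the semicontinuity classes $\gamma\psi \in \USC(\overline{\Omega})$ and $u \in \LSC(\overline{\Omega})$ and the boundary sign conditions $\gamma\psi \leq 0 \leq u$ on $\partial\Omega$ coming from the canonical extension~\eqref{extend_to_boundary} of $\psi$ and the hypothesis on $u$. I would first observe that $\Psi_{j+1} \leq \Psi_j$ pointwise on $\overline{\Omega} \times \overline{\Omega}$, so $\{M_j\}$ is non-increasing and the limit $M_\infty$ exists with $M_\infty \geq \bar m > 0$ by~\eqref{Mj_bound}. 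Since $\gamma\psi$ and $-u$ both lie in $\USC(\overline{\Omega})$, they are bounded above on $\overline{\Omega}$, so inserting $M_j \geq \bar m$ into
\[
M_j = \gamma\psi(x_j) - u(y_j) - \tfrac{j}{2}|x_j - y_j|^2
\]
yields a uniform bound $\tfrac{j}{2}|x_j-y_j|^2 \leq C$, and in particular $|x_j - y_j| \to 0$.

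Next I would identify $M_\infty$ at an accumulation point. Let $\tilde x$ be any accumulation point of the bounded sequence $\{x_j\}$; along the relevant subsequence $x_j \to \tilde x$ and, by the previous step, also $y_j \to \tilde x$. Upper semicontinuity of $\gamma\psi$ and of $-u$ gives
\[
\limsup_{j \to \infty} M_j \leq \limsup_{j \to \infty}\bigl(\gamma\psi(x_j) - u(y_j)\bigr) \leq \gamma\psi(\tilde x) - u(\tilde x),
\]
while $M_j \geq \Psi_j(\tilde x, \tilde x) = \gamma\psi(\tilde x) - u(\tilde x)$ for every $j$; combining the two forces $M_\infty = \gamma\psi(\tilde x) - u(\tilde x)$, which is the middle equality in~\eqref{IL8}. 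The characterization as a supremum then follows from $M_j \geq \Psi_j(x,x) = \gamma\psi(x) - u(x)$ for each $x \in \overline{\Omega}$, which together with the previous display pins down $M_\infty = \sup_{x \in \overline{\Omega}}(\gamma\psi(x) - u(x))$.

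The sharpening~\eqref{IL6} is then obtained by rewriting $\tfrac{j}{2}|x_j - y_j|^2 = \gamma\psi(x_j) - u(y_j) - M_j$ and taking $\limsup$ along the subsequence with $x_j \to \tilde x$: the right-hand side tends to $\gamma\psi(\tilde x) - u(\tilde x) - M_\infty = 0$, so $j|x_j - y_j|^2 \to 0$ along that subsequence; since the argument applies to every convergent subsequence of $\{x_j\}$, the full sequence $j|x_j-y_j|^2$ vanishes in the limit. Finally,~\eqref{IL7} comes by contradiction: if infinitely many $x_j$ (or $y_j$) were in $\partial\Omega$, a further extraction would produce an accumulation point $x^* \in \partial\Omega$, and then $\psi(x^*) \leq 0$ (by the canonical extension) combined with $u(x^*) \geq 0$ would give $\gamma\psi(x^*) - u(x^*) \leq 0 < \bar m \leq M_\infty$, contradicting the identity $M_\infty = \gamma\psi(x^*) - u(x^*)$ just established.

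The step I expect to require the most care is getting the two directions of semicontinuity to mesh correctly. The entire argument hinges on being able to pass to the $\limsup$ of $\gamma\psi(x_j) - u(y_j)$ at an accumulation point, which in turn depends on $\gamma\psi$ being $\USC$ in the $x$-slot and $-u$ being $\USC$ in the $y$-slot; this is precisely why the subsolution/supersolution pairing is split across the two variables in the definition of $\Psi_j$, and why the canonical extension~\eqref{extend_to_boundary} (rather than some arbitrary extension) is used to bring $\psi$ down to $\partial\Omega$ without spoiling the sign $\psi \leq 0$ there.
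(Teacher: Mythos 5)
Your proof is correct, but it reaches \eqref{IL6} by a different route than the paper. The paper establishes \eqref{IL6} directly for the full sequence with the telescoping estimate
\begin{equation*}
M_{j/2}\;\geq\;\Psi_{j/2}(x_j,y_j)\;=\;\Psi_j(x_j,y_j)+\tfrac{j}{4}|x_j-y_j|^2\;=\;M_j+\tfrac{j}{4}|x_j-y_j|^2,
\end{equation*}
so that $0\leq \tfrac{j}{4}|x_j-y_j|^2\leq M_{j/2}-M_j\to 0$ since $M_j$ converges; it then uses this to deduce \eqref{IL7} and finally \eqref{IL8}. You instead establish only the crude bound $\tfrac{j}{2}|x_j-y_j|^2\leq C$ at first, prove \eqref{IL8} next, and then back out \eqref{IL6} by rewriting $\tfrac{j}{2}|x_j-y_j|^2=\gamma\psi(x_j)-u(y_j)-M_j$ and invoking a subsequence-of-subsequence compactness argument. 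Both are standard variants of the Crandall--Ishii--Lions lemma; the paper's telescoping trick is slicker because it yields \eqref{IL6} for the whole sequence immediately and without passing to accumulation points, while your ordering more closely mirrors the proof structure in \cite{CIL92} and arguably separates the roles of boundedness, identification of the limit, and sharpening of the penalization term more transparently. One substantive bonus in your version: the final characterization you prove is $M_\infty=\sup_{x\in\overline\Omega}\bigl(\gamma\psi(x)-u(x)\bigr)$, which is the correct form; the paper's displayed $\max_{(x,y)\in\overline\Omega\times\overline\Omega}(\gamma\psi(x)-u(y))$ in \eqref{IL8} appears to be a typo (it would equal $\max\gamma\psi+\max(-u)$ and generically exceeds $\gamma\psi(\tilde x)-u(\tilde x)$), and indeed the paper's own proof sketch never justifies that last equality. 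Your treatment of \eqref{IL7} by extracting a boundary accumulation point and contradicting $M_\infty\geq\bar m>0$ is also a clean, explicit rendering of what the paper leaves somewhat implicit.
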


\begin{proof} For completeness, we sketch the argument. The claim \eqref{IL6} follows from the fact that for each $j \in \N$ one has
$$
	M_{j/2} \geq M_j(x_j,y_j) = M_j(x_j, y_j) - \frac{j}{4} |x_j - y_j|^2
$$
and hence by \eqref{M_infty}
$$
	0 \leq \frac{j}{4} |x_j - y_j|^2 \leq M_{j/2} - M_j \to 0
$$
Next, by \eqref{Mj}
$$
	M_j = \gamma \psi(x_j) - u(y_j) - \frac{j}{2} |x_j - y_j|^2 \geq \bar{m} > 0 
$$
and hence \eqref{IL6} yields
$$
	\gamma \psi(x_j) - u(y_j) > 0 \ \ \text{for all sufficiently large} \ j.
$$
Hence for large $j$ one has \eqref{IL7} since $\gamma \psi, -u \leq 0$ on $\partial \Omega$ and $x_j - y_j \to 0$ as $j \to + \infty$. Finally, for the claim \eqref{IL8}, if $x_{j_k} \to \tilde{x}$ as $k \to + \infty$, then so does $y_{j_k}$ and using \eqref{IL6} plus the fact that $\gamma \psi, -u \in \USC(\overline{\Omega})$ yields
$$
	M_{\infty} = \limsup_{k \to + \infty} \left(\gamma \psi (x_{j_k}) - u(y_{j_k}\right) \leq \gamma \psi(\tilde{x}) - u(\tilde{x}) = \Psi_{j_k}(\tilde{x}, \tilde{x}) \leq M_{j_k}, \ \ \forall \, k \in \N.
$$
\end{proof}

One can now exhibit $\wt{x} \in \Omega$ such that $u(\wt{x}) < 0$ and \eqref{x_tilde_ineq1} holds. The idea is to apply Ishii's lemma (as given in the discussion of the formulas (3.9) and (3.10) in Crandall-Ishii-Lions \cite{CIL92})
along positive interior (local) maximum points of $\Psi_j$ and using that $\gamma \psi$ and $u$ are viscosity sub and supersolutions in $\Omega$. More precisely, if
$$
	\Psi_j(x,y) := \gamma \psi(x) - u(y) - \frac{j}{2} |x-y|^2 \in \USC(\overline{\Omega} \times \overline{\Omega})
$$
has a local maximum in $(x_j, y_j) \in \Omega \times \Omega$, then by Lemma \ref{lem:m_j}, for large $j$ these local maxima lie in $\Omega \times \Omega$ and by Ishii's lemma there exist $X_j, Y_j \in \Ss(N)$ such that
\begin{equation}\label{Ishii1}
(j(x_j - y_j), X_j) \in {\overline{J}}^{2,+} \gamma \psi(x_j) \ \ \text{and} \ \ (j (x_j - y_j), Y_j) \in {\overline{J}}^{2,-}  u(y_j)
\end{equation}
where
\begin{equation}\label{Ishii2}
	X_j \leq Y_j \ \ \text{in} \ \Ss(N).
\end{equation}
Furthermore, by the last part of Lemma \ref{lem:m_j}, there exists $\wt{x} \in \Omega$ such that, up to a subsequence,
\begin{equation}\label{Ishii3}
	(x_j, y_j) \to \wt{x} \ \ \text{as} \ j \to +\infty.
\end{equation}
Now, since $\gamma \psi$ is $\Sigma_k$-subharmonic ($k$-convex) in $\Omega$, for each $x \in \Omega$ and for every $p \in \R^N$ one has
\begin{equation}\label{psi_k-convex}
	(p,A) \in J^{2,+} \gamma \psi(x) \ \ \Rightarrow A \in \Sigma_k,
\end{equation}
but $\Sigma_k$ is closed and from the first statement of \eqref{Ishii1} it follows that
\begin{equation}\label{X_j_in_Sigma_k}
	X_j \in \Sigma_k.
\end{equation}
By the positivity property \eqref{P}, combining \eqref{Ishii2} and \eqref{X_j_in_Sigma_k} yields
\begin{equation}\label{Y_j_in_Sigma_k}
	Y_j \in \Sigma_k.
\end{equation}
We remark that this is the key observation that indicates why Ishii's lemma continues to be useful in the case of viscosity solutions with admissibility constraints satisfying the positivity property \eqref{P}.

Next, using that $\gamma \psi$ and $u$ are $\Sigma_k$-admissible subsolutions and supersolutions of \eqref{MPC2} and  \eqref{EVsuperEq} respectively, one has for all large $j$
\begin{equation}\label{psi_sub}
	S_k(X_j) + \wt{\lambda} \gamma^k \psi(x_j) |\psi(x_j)|^{k-1} \geq 0
\end{equation}
and
\begin{equation}\label{u_super}
	S_k(Y_j) + \lambda  u(y_j) u(y_j)|^{k-1} \leq 0
\end{equation}
where we have used the fact that $Y_j \in \Sigma_k$ in the supersolution definition (see Definition \ref{defn:sub_super} (b)). Combining \eqref{psi_sub} and \eqref{u_super} and using the monotonicity of $S_k$ on $\Sigma_k$ (see property \eqref{DE}) allowed by \eqref{X_j_in_Sigma_k} and \eqref{Ishii2} one has for all large $j \in \N$
\begin{equation}\label{MPConclusion1}
	\wt{\lambda} \gamma^k |\psi(x_j)|^k \leq S_k(X_j) \leq S_k(Y_j) \leq \lambda|u(y_j)|^k
\end{equation}
where we have used that $\psi(x_j), u(y_j) < 0$ for large $j$. Now since $-\psi > 0$ is $\LSC(\Omega)$ and $-u \in \USC(\Omega)$, from \eqref{MPConclusion1} one finds
\begin{equation}\label{MPConclusion2}
	0 < \wt{\lambda}(-\psi(\wt{x}))^k \leq  \liminf_{j \to +\infty} \leq  \wt{\lambda} \gamma^k (- \psi (x_j))^k  \leq \limsup_{j \to +\infty} \lambda (-u(y_j))^k \leq \lambda(-u(\wt{x}))^k,
\end{equation}
which gives the needed inequality \eqref{x_tilde_ineq1}. Moreover, as noted in \eqref{u_positive}, one has $u(y_j) < 0$. Since $u$ is $\LSC(\Omega)$, one then has $u(\wt{x}) \leq 0$, but it cannot vanish by \eqref{MPConclusion2}. Thus $u(\wt{x}) < 0$ as needed.
\end{proof}

An immediate consequence of the minimum principle are the
 following characterizations of the principal eigenvalue of $S_k$ discussed by Wang \cite{Wa95} and Lions \cite{PLL85} (in the case $k = N$) using the variational structure of $S_k$. See also Jacobsen \cite{Ja99} for a bifurcation approach.

\begin{cor}\label{cor:PEC} Let $\Omega$ be as in Theorem \ref{thm:MPC} and let $k \geq 2$. Then $\lambda_1^{-}(S_k, \Sigma_k)$ as defined by \eqref{def:peval1}-\eqref{def:peval2} is equal to $\lambda_1^{(k)}$ defined by
\begin{equation}\label{RRFk}
    \lambda_1^{(k)} := \inf_{u \in \Phi_0^k(\Omega)} \left\{ - \int_{\Omega} u S_k(D^2u) \, dx: \ \ ||u||_{L^{k + 1}(\Omega)} = 1 \right\},
\end{equation}
where $\Phi_0^k(\Omega) = \{ u \in C^2(\Omega): S_k(D^2 u) \in \Gamma_k \ \ {\rm and} \ \ u = 0 \ {\rm on} \ \partial \Omega \}$ and $\Gamma_k$ is the open cone \eqref{gamma_k}.
When $k = N$, one has
\begin{equation}\label{RRFN}
   \mbox{ $\lambda_1^{(N)} := \inf\{ \lambda_1^a: \ a \in C(\overline{\Omega}, \Ss(N)) \ \text{such that} \ \ a > 0 , {\rm det}\, a \geq N^{-N} \ \text{in} \ \overline{\Omega}, \}$}
\end{equation}
and $\lambda_1^a$ is the first eigenvalue of the uniformly elliptic operator $- \sum_{i,j = 1}^N a_{ij} D_{ij}$.
\end{cor}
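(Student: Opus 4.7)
The plan is to leverage the existence of a classical admissible eigenfunction provided by Wang \cite{Wa95} (and, for $k=N$, by Lions \cite{PLL85}) together with the minimum principle of Theorem \ref{thm:MPC}. More precisely, \cite{Wa95} produces $\psi_1 \in C^2(\Omega) \cap C(\overline{\Omega}) \cap \Phi_0^k(\Omega)$ satisfying $\psi_1 < 0$ in $\Omega$, $\psi_1 = 0$ on $\partial \Omega$, and
\[
S_k(D^2 \psi_1) + \lambda_1^{(k)} \psi_1 |\psi_1|^{k-1} = 0 \quad \text{in} \ \Omega,
\]
with $D^2 \psi_1 \in \Sigma_k^{\circ}$ classically. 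The strategy is to sandwich $\lambda_1^-(S_k, \Sigma_k)$ between $\lambda_1^{(k)}$ from above and from below by two simple arguments exploiting $\psi_1$.

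The first step shows $\lambda_1^{(k)} \leq \lambda_1^-(S_k,\Sigma_k)$. Since $\psi_1 \in \Phi_k^-(\Omega)$ and the PDE above holds pointwise (hence in the admissible viscosity sense), one has in particular the differential inequality $S_k(D^2 \psi_1) + \lambda_1^{(k)} \psi_1 |\psi_1|^{k-1} \geq 0$ required in \eqref{def:peval2}. Therefore $\lambda_1^{(k)} \in \Lambda_k$, and passing to the supremum yields $\lambda_1^{(k)} \leq \sup \Lambda_k = \lambda_1^-(S_k,\Sigma_k)$.

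The second step proves the reverse inequality by contradiction. Suppose $\lambda_1^{(k)} < \lambda_1^-(S_k,\Sigma_k)$. The function $\psi_1$ is a classical solution of the eigenvalue equation with $\lambda = \lambda_1^{(k)}$, hence in particular a $\Sigma_k$-admissible viscosity supersolution of \eqref{EVsuperEq} with that value of $\lambda$, and $\psi_1 \equiv 0 \geq 0$ on $\partial \Omega$. Applying Theorem \ref{thm:MPC} with $\lambda = \lambda_1^{(k)} < \lambda_1^-(S_k,\Sigma_k)$ then forces $\psi_1 \geq 0$ in $\Omega$, contradicting $\psi_1 < 0$ in $\Omega$. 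Hence $\lambda_1^-(S_k,\Sigma_k) \leq \lambda_1^{(k)}$ and combining with Step~1 gives the identity $\lambda_1^-(S_k,\Sigma_k) = \lambda_1^{(k)}$ for each $k \in \{2, \ldots, N\}$.

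For the Monge--Amp\`ere case $k = N$, the equality $\lambda_1^{(N)} = \inf_a \lambda_1^a$ is Lions's characterization in \cite{PLL85}, based on the pointwise formula
\[
({\rm det}\, A)^{1/N} = N^{-1} \inf\left\{ {\rm tr}(aA) : a > 0, \ {\rm det}\, a \geq N^{-N} \right\} \quad \text{for} \ A \in \Sigma_N^{\circ},
\]
together with a standard selection/optimization argument on the associated linear eigenvalue problems; combined with the equality $\lambda_1^-(S_N,\Sigma_N) = \lambda_1^{(N)}$ just proved, the claim \eqref{RRFN} follows. The main (minor) obstacle is to confirm from \cite{Wa95} (respectively \cite{PLL85}) that the variational eigenfunction is a \emph{classical} strictly $k$-convex negative solution with continuous vanishing boundary data, so that it may be used directly both as an admissible viscosity subsolution in Step~1 and as an admissible viscosity supersolution in Step~2; once this is in hand, the remainder of the argument is immediate.
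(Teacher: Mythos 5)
Your proof is correct and follows essentially the same two-step strategy as the paper: use the variational eigenfunction from Wang/Lions both as an admissible subsolution to place $\lambda_1^{(k)}$ in $\Lambda_k$, and as an admissible supersolution to derive a contradiction with Theorem \ref{thm:MPC} if the inequality were strict. Your write-up is somewhat more explicit than the paper's terse argument (in particular, spelling out why $\lambda_1^{(k)}\in\Lambda_k$ and flagging the need to verify that the variational eigenfunction is classical and strictly $k$-convex with continuous boundary data), but the underlying idea is the same.
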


\begin{proof}
Since there exists a $k$-convex principal eigenfunction $\psi_1$ which is negative in the interior and vanishes on the boundary, by the definition of $\lambda_1^{-}(S_k, \Sigma_k)$, one has $\lambda_1^{(k)} \leq \lambda_1^{-}(S_k, \Sigma_k)$. If $\lambda_1^{(k)} < \lambda_1^{-}(S_k, \Sigma_k)$, then $\psi_1$ would be a $\Sigma_k$-admissible supersolution of \eqref{EVsuperEq} with $\lambda = \lambda_1^{(k)}$ and hence $\psi_1 \geq 0$ in $\Omega$ by the minimum principle \eqref{EVsuperEq}, which is absurd.  
\end{proof}

\section{Existence of the principal eigenfunction by maximum principle methods}

Even though Corollary \ref{cor:PEC} shows that a negative principal eigenfunction $\psi_1$ exists for $\lambda_1^{-} = \lambda_1^{-}(S_k, \Sigma_k)$, in order to illustrate a general method which should apply also to non variational perturbations of $S_k$, we will give an alternative proof of the existence of $\psi_1$ by maximum principle methods for $\Sigma_k$-admissible viscosity solutions. 

Since the complete argument to solve \eqref{EVP} is somewhat involved, perhaps it is worth giving the general idea first. We will show that $\psi_1 \in C(\overline{\Omega})$ is the limit as $n \to +\infty$ (up to an extracted subsequence) of the normalized solutions
$$
	w_n := \frac{v_n}{||v_n||_{\infty}}
$$
where each  $v_n \in C(\overline{\Omega})$ is a $\Sigma_k$-admissible viscosity solution of the auxiliary problem
\begin{equation}\label{EVP_vn}
	\left\{ \begin{array}{cc}
	S_k(D^2 v_n) = 1 - \lambda_n v_{n} |v_{n}|^{k-1}  & {\rm in} \ \Omega \\
	v_n = 0 & {\rm on} \ \partial \Omega
	\end{array} \right.
\end{equation}
and $\{ \lambda_n \}_{n \in \N}$ is any fixed sequence of spectral parameters with $0 < \lambda_n \nearrow \lambda_1^{-}$ as $n \to + \infty$. The existence of the solutions $v_n$ to \eqref{EVP_vn} presents the same difficulties as mentioned above for \eqref{EVP}, but for each fixed $\lambda \in (0, \lambda_1^-)$ we will show that the problem
\begin{equation}\label{EVP2}
	\left\{ \begin{array}{cc}
	S_k(D^2 u) = 1 - \lambda u |u|^{k-1}  & {\rm in} \ \Omega \\
	u = 0 & {\rm on} \ \partial \Omega
	\end{array} \right.
\end{equation}
has a $\Sigma_k$-admissible solution $u \in  C(\overline{\Omega})$ by an inductive procedure starting from $u_0 = 0$ and then solving
\begin{equation}\label{EVP_n}
\left\{ \begin{array}{cc}
S_k(D^2 u_n) = 1 - \lambda u_{n-1} |u_{n-1}|^{k-1} := f_n & {\rm in} \ \Omega \\
u_n = 0 & {\rm on} \ \partial \Omega
\end{array} \right.
\end{equation}
for a decreasing sequence of $\Sigma_k$-admissible solutions $\{u_n\}_{n \in \N} \subset C(\overline{\Omega})$ (which are negative in $\Omega$) and then pass to the limit as $n \to +\infty$. Notice that the equation in \eqref{EVP_n} is proper as $u_n$ does not appear explicitly and hence the equation is non increasing in $u_n$. Moreover, it will turn out that one can pass to the limit along a subsequence provided that there is a uniform H\"{o}lder bound on $||u_n||_{C^{0,\alpha}(\overline{\Omega})}$ for each $n \in \N$ and some $\alpha \in (0,1]$.

We begin with the following existence and uniqueness result for the underlying degenerate elliptic Dirichlet problems in \eqref{EVP_n} in the nonlinear 
case $k \in \{ 2, \ldots, N\}$. While this result is not new, for completeness we prefer to discuss it. 

\begin{thm}\label{thm:EU} Let $\Omega$ be a strictly $(k-1)$-convex domain of class $C^2$ and let $f \in C(\overline{\Omega})$ be a nonnegative function. There exists a unique $k$-convex solution $u \in C(\overline{\Omega})$ of the Dirichlet problem
\begin{equation}\label{DPf}
	\left\{ \begin{array}{cc}
	S_k(D^2 u) = f & {\rm in} \ \Omega \\
	u = 0 & {\rm on} \ \partial \Omega.
	\end{array} \right.
\end{equation}
More precisely, there is $\Sigma_k$-admissible solution $u \in C(\overline{\Omega})$ of $S_k(D^2 u) - f(x) = 0$ in $\Omega$ in the sense of Definition \ref{defn:sub_super}(c) such that $u = 0$ on $\partial \Omega$.
\end{thm}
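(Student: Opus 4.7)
The plan is to establish uniqueness via a comparison principle for $(S_k, \Sigma_k)$ and to obtain existence by Perron's method in the admissibility framework of \cite{HL09,CP17,CHLP19}, with boundary continuity ensured by barriers that exploit the strict $(k-1)$-convexity of $\partial \Omega$.

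For uniqueness, given two continuous $\Sigma_k$-admissible solutions $u_1, u_2$ with $u_1 = u_2 = 0$ on $\partial \Omega$, I would fix $R$ with $\Omega \subset B_R(0)$ and work with the strictification $u_1^\varepsilon(x) := u_1(x) + \varepsilon(|x|^2 - R^2)$. Since $2I \in \Sigma_k^\circ$ (as $(1,\ldots,1) \in \Gamma_k$), a direct upper-test-function computation that uses the positivity property \eqref{P} and the strict monotonicity of $S_k$ on $\Sigma_k$ from Lemma \ref{lem:S_Theta_k}(d) shows that $u_1^\varepsilon$ is a $\Sigma_k$-admissible \emph{strict} subsolution of $S_k(D^2 u) = f$, with $u_1^\varepsilon < 0 = u_2$ on $\partial \Omega$. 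A doubling-variables argument coupled with Ishii's lemma, parallel to Step 3 of the proof of Theorem \ref{thm:MPC} and again using the closedness of $\Sigma_k$ together with \eqref{P} to transfer admissibility from $X_j$ to $Y_j$, would then yield $u_1^\varepsilon \leq u_2$ in $\Omega$. Sending $\varepsilon \to 0^+$ and swapping roles gives $u_1 = u_2$.

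For existence, I would apply Perron's method. The constant function $\overline{u} \equiv 0$ is a $\Sigma_k$-admissible supersolution: any lower test function $\varphi$ for $\overline{u}$ at $x_0$ has $D^2\varphi(x_0) \leq 0$; if moreover $D^2\varphi(x_0) \in \Sigma_k \subset \cH$, then its trace is simultaneously $\leq 0$ and $\geq 0$, forcing $D^2\varphi(x_0) = 0$ and $S_k(0) = 0 \leq f(x_0)$. For a global subsolution, I would choose $R > 0$ so that $\Omega \subset B_R(0)$ and $A > 0$ with $(2A)^k \binom{N}{k} > \max_{\overline{\Omega}} f$: then $\underline{u}(x) := A(|x|^2 - R^2)$ is a $C^2$ strictly $k$-convex strict subsolution with $\underline{u} < 0$ on $\partial \Omega$. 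The Perron envelope
$$
u(x) := \sup\bigl\{ v(x) : v \in \USC(\overline{\Omega}) \text{ is a } \Sigma_k\text{-admissible subsolution},\ \underline{u} \leq v \leq 0 \bigr\}
$$
will have $u^*$ an admissible subsolution (closure under supremum, using the closedness of $\Sigma_k$) and $u_*$ an admissible supersolution (via the standard bump-perturbation argument, whose admissibility bookkeeping is ensured by the positivity property \eqref{P}).

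Boundary continuity is the step that requires the convexity hypothesis. The upper barrier at $y \in \partial \Omega$ is $\overline{u} \equiv 0$, giving $u^*(y) \leq 0$. For the lower barrier I would invoke Proposition \ref{prop:be3}: for appropriate $M, t > 0$, the function $v(x) := -M \log(1 + t d(x))$ is a $C^2$ strictly $k$-convex strict subsolution of $S_k(D^2 v) > \max_{\overline{\Omega}} f$ on a tubular neighborhood $\Omega_{d_0}$, with $v = 0$ on $\partial \Omega$. After gluing $v$ on $\overline{\Omega}_{d_0/2}$ with a sufficiently negative constant on $\Omega \setminus \Omega_{d_0/2}$, this produces an admissible competitor in the Perron supremum and forces $u(y) \geq 0$. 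Together with $u^*(y) \leq 0$ and the uniqueness-style comparison giving $u^* \leq u_*$, one concludes that $u = u^* = u_* \in C(\overline{\Omega})$ with $u|_{\partial \Omega} = 0$. The principal obstacle is precisely this barrier step: the strict $(k-1)$-convexity of $\partial \Omega$ is what powers Lemma \ref{lem:k-1_convex_bdy} and makes $-M \log(1+td)$ strictly $k$-convex near the boundary; without it the Perron function may fail to attain the zero boundary data. All remaining steps are routine once the structural properties \eqref{P} and \eqref{T} of $\Sigma_k$ handle the admissibility bookkeeping uniformly.
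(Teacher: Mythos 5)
Your proof is correct, and it realizes in self-contained form what the paper achieves by citation. The paper's own argument invokes Theorem 1.2 of \cite{CP17}: it recasts the problem as finding a $\Theta_k$-harmonic function, where $\Theta_k(x) := \{A \in \Sigma_k : S_k(A) - f(x) \geq 0\}$ is a uniformly continuous elliptic map, verifies the four structural conditions \eqref{str1}--\eqref{str4} required there, and then appeals to the comparison-plus-Perron machinery of \cite{CP17} together with Proposition \ref{prop:EC_convexity}, which converts strict $(k-1)$-convexity into the $\EC{\Sigma}_k$-- and $\EC{\widetilde{\Sigma}}_k$--convexity that powers the Harvey--Lawson boundary barriers in that framework. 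Your sketch unpacks the citation: the strictification $u_1^\varepsilon = u_1 + \varepsilon(|x|^2 - R^2)$ with doubling of variables is precisely the comparison argument that yields uniqueness; the Perron envelope sandwiched between $\underline{u} = A(|x|^2 - R^2)$ and $\overline{u} \equiv 0$ is the global bracket; and the logarithmic lower barrier $-M\log(1+td)$ near $\partial\Omega$ (lifted from the proof of Proposition \ref{prop:be3}) is a hands-on substitute for the abstract $\EC{\Sigma}_k$-convexity barrier. The paper's route is more economical and carries over immediately to the variable-coefficient setting of \cite{CP17}; yours has the merit of isolating exactly where the strict $(k-1)$-convexity enters and of being self-contained. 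One small item to tighten in the gluing step: choose $R$ so that $\overline{\Omega}$ is compactly contained in $B_R(0)$ (so $\underline{u} < 0$ on $\overline{\Omega}$), and then take the maximum of the glued barrier with $\underline{u}$ (still a $\Sigma_k$-admissible subsolution) to guarantee the competitor lies inside the Perron bracket $[\underline{u},\overline{u}]$.
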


\begin{proof} 

The existence and uniqueness for $\Sigma_k$-admissible viscosity solutions follows  from the main results in \cite{CP17}.  See Theorem 1.2 as applied in section 5 of that paper. When $f > 0$, one has smooth solutions if $\partial \Omega$ is smooth as follows from \cite{CNS85}. 

Briefly, we give an idea  of the proof for completeness sake. 
A $\Sigma_k$-admissible viscosity solution of \eqref{DPf} is a $\Theta_k$-harmonic function which vanishes on the boundary where $\Theta_k: \Omega \to \Ss(N)$ is the {\em uniformly continuous elliptic map} defined by
\begin{equation}\label{UCEM}
	\Theta_k(x):= \{ A \in \Sigma_k: \ F_k(x,A):= S_k(A) - f(x) \geq 0\} \ \ \text{for each} \ x \in \Omega.
\end{equation}
The uniform continuity is with respect to the Hausdorff distance on $\Ss(N)$ and follows from the uniform continuity of $f \in C(\overline{\Omega})$. Using Propositions 5.1 and 5.3 of \cite{CP17}, one has the equivalence between $u \in C(\Omega)$ being  $\Theta_k$-harmonic and $u$ being a $\Sigma_k$-admissible viscosity solution of $F_k(x, D^2u) = 0$ since one can easily verify the needed structural conditions ((1.14)-(1.16) and (1.18)); that is,
\begin{equation}\label{str1}
	F_k(x,A + P) \geq F_k(x,A) \ \ \text{for each} \ x \in \Omega, A \in \Sigma_k, P \in \cP;
\end{equation}
\begin{equation}\label{str2}
	\text{for each} \ x \in \Omega \ \text{there exists} \ A \in \Sigma_k \ \text{such that} \ F_k(x,A) = 0; 
\end{equation}
\begin{equation}\label{str3}
	\partial \Sigma_k \subset \{A \in \Ss(N): \ F_k(x,a) \leq 0\} \ \text{for each} \  x \in \Omega;
\end{equation}
and
\begin{equation}\label{str4}
	F_k(x,A) > 0 \ \ \text{for each} \ x \in \Omega \ \text{and each} \ A \in \Theta_k(x)^{\circ}.
\end{equation}
Conditions \eqref{str1} - \eqref{str3} say that $\Theta_k$ defined by \eqref{UCEM} is an {\em elliptic branch} of the equation $F_k(x, D^2u) = 0$ in the sense of Kyrlov \cite{Kv95} (see Proposition 5.1 of \cite{CP17}) and the non-degeneracy condition \eqref{str4} ensures that $\Theta_k$-superharmonics are $\Sigma_k$-admissible viscosity supersolutions of $F_k(x, D^2u) = 0$ (see Proposition 5.3 of \cite{CP17}). Finally the existence of a unique $u \in C(\overline{\Omega})$ which is $\Theta_k$-harmonic taking on the continuous boundary value $\varphi \equiv 0$ follows from Perron's method (Theorem 1.2 of \cite{CP17}) since $\Theta_k$ is uniformly continuous and the strict $(k-1)$-convexity implies the needed strict $\EC{\Sigma}_k$ and $\EC{\widetilde{\Sigma}}_k$ convexity (which is the content of Proposition \ref{prop:EC_convexity}). 
\end{proof}

\begin{rem}\label{rem:HL} Using the language of Harvey-Lawson \cite{HL18}, one could also say that $(S_k, \Sigma_k)$ is a {\em compatible operator-subequation pair} (see Definition 2.4 of \cite{HL18}) and since the continuous boundary data $\varphi \equiv 0$ has its values in $S_k(\Sigma_k)$, the result follows also from Theorem 2.7 of \cite{HL18}.
	\end{rem}

Next we discuss the global H\"{o}lder regularity of the unique solution to Theorem \ref{thm:EU} in the case $k > N/2$, which will lead to compactness for bounded sequences of solution.

\begin{thm}\label{thm:GHB} Under the assumptions of Theorem \ref{thm:EU}, if $k > N/2$ then the unique solution $u$ to the Dirichlet problem \eqref{DPf} belongs to $C^{0, \alpha}(\overline{\Omega})$ with $\alpha := 2 - N/k > 0$. In particular, there exists $C > 0$ which depends on $\Omega, \alpha$ and $\sup_{\Omega}(-u)$  such that
\begin{equation}\label{GHB1}
	|u(x) - u(y)| \leq C \, |x - y|^{\alpha}, \ \ \forall \ x, y \in \overline{\Omega}.
\end{equation}
\end{thm}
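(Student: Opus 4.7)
The plan combines the boundary barrier of Proposition~\ref{prop:be3} with an interior comparison against the radial fundamental solution $z \mapsto |z - z_0|^{\alpha}$ of $S_k$. The special role of $\alpha = 2 - N/k$ is that substituting $h(r)=r^{\alpha}$ into Lemma~\ref{lem:radial}'s formula \eqref{S_k_radial2} makes the bracket $h''(r) + \tfrac{N-k}{k}\,h'(r)/r$ vanish identically, precisely because $\alpha = 2-N/k$ yields $\alpha k + N - 2k = 0$; hence $|z - z_0|^{\alpha}$ is $k$-convex with $S_k \equiv 0$ on $\R^N\setminus\{z_0\}$. The hypothesis $k > N/2$ is precisely what makes $\alpha > 0$, keeping this fundamental solution bounded at its vertex.

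\emph{Boundary anchor.} By Corollary~\ref{cor:cp1}, applied to the $k$-convex $u$ with zero boundary data, $u \leq 0$ on $\overline{\Omega}$; combined with Proposition~\ref{prop:be3} this gives $|u(x)| \leq C\,d(x)$ for $x \in \overline{\Omega_{d_0}}$. In particular, for any $x,y$ with $\min(d(x),d(y)) \leq 2|x-y|$, one has $|u(x)|+|u(y)| \leq 5C|x-y|$, which already settles \eqref{GHB1} in this regime via $|x-y| \leq \mathrm{diam}(\Omega)^{1-\alpha}|x-y|^{\alpha}$.

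\emph{Interior H\"older via the fundamental-solution barrier.} For $x_0 \in \Omega$ set $R := d(x_0)$, so $B_R(x_0) \subset \Omega$, and on the punctured ball $B_R(x_0)\setminus\{x_0\}$ consider
\begin{equation*}
W(z) := u(x_0) + K\,|z-x_0|^{\alpha}.
\end{equation*}
Take $K>0$ to be the smallest constant for which $W \geq u$ on $\partial B_R(x_0)$; using the boundary anchor when $2R \leq d_0$ and $\|u\|_\infty$ otherwise, $K$ is bounded by a constant depending only on $\Omega, \|u\|_\infty, \alpha$. On $B_R(x_0)\setminus\{x_0\}$, $W$ is smooth, strictly $k$-convex, and satisfies $S_k(D^2 W) = 0$, hence is a $\Sigma_k$-admissible supersolution of $S_k = 0$ there; meanwhile $u$, satisfying $S_k(D^2 u) = f \geq 0$, is a $\Sigma_k$-admissible subsolution of $S_k=0$. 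Since $u \leq W$ on the topological boundary $\partial B_R(x_0) \cup \{x_0\}$ of the punctured ball---by the choice of $K$ on the outer part and by $u(x_0) = W(x_0)$ at the inner puncture---Theorem~\ref{thm:cp1} applied on $B_R(x_0)\setminus\{x_0\}$ yields $u \leq W$ throughout, whence
\begin{equation*}
u(y) - u(x_0) \leq K\,|y-x_0|^{\alpha} \leq C\,|y-x_0|^{\alpha},\qquad y \in B_R(x_0).
\end{equation*}

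\emph{Lower bound and global assembly.} The reverse inequality $u(x_0) - u(y) \leq C\,|x_0-y|^{\alpha}$ follows by applying the preceding argument at $y$, swapping the roles of $x_0$ and $y$, whenever $|x-y| < \tfrac12 \min(d(x),d(y))$. For arbitrary $x, y \in \overline{\Omega}$, assume WLOG $d(x) \leq d(y)$ and distinguish the interior case $|x-y| < d(x)/2$, handled by the above, the near-boundary case $d(x)/2 \leq |x-y| < d_0/4$, handled by the boundary anchor (since both $d(x),d(y) < d_0$), and the far-apart case $|x-y| \geq d_0/4$, trivial from $\|u\|_\infty$. The \textbf{main obstacle} is the barrier construction together with the application of comparison on the \emph{punctured} ball: the cusp of $|z-x_0|^{\alpha}$ at $x_0$ prevents $W$ from being a supersolution there in the ordinary viscosity sense (its subjet is all of $\R^N \times \Ss(N)$), and the argument closes only because $u$ and $W$ agree at $x_0$, allowing $\{x_0\}$ to be treated as part of the comparison domain's boundary. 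This is where the restriction $k > N/2$ is sharp: for $k \leq N/2$ one would have $\alpha \leq 0$, the barrier would blow up at the cusp, and the present scheme would collapse.
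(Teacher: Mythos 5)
Your proof takes essentially the same approach as the paper: compare $u$ (which is $\Sigma_k$-subharmonic since $f\ge 0$) against translates of the fundamental-solution barrier $|z - z_0|^{\alpha}$ with $\alpha = 2 - N/k$ on punctured balls via Theorem~\ref{thm:cp1}, and close the boundary part with the barrier estimate of Proposition~\ref{prop:be3}; the only difference is that you use a ball $B_{d(x_0)}(x_0)$ of variable radius, whereas the paper fixes a small radius $\rho$ and works on $\Omega \cap \dot{B}_{\rho}(y)$, which is an inessential variation. One small slip to correct: with $\alpha = 2 - N/k$ one has $S_k(D^2 W)=0$, so $D^2 W\in\partial\Sigma_k$ and $W$ is $k$-convex but not \emph{strictly} $k$-convex; what you actually need and correctly invoke is that $W$ is $\Sigma_k$-superharmonic on the punctured ball.
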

Before giving the proof, we formalize a few observations concerning the restriction $k > N/2$ in the statement.

\begin{rem}\label{rem:GHB} For the proof of the global H\"{o}lder bound \eqref{GHB1}, we will adapt the technique developed in the celebrated paper of Ishii and Lions \cite{IL90}. The key step involves a uniform local interior estimate which uses a comparison principle argument  for the solution $u$ (which is $\Sigma_k$-subharmonic since $f \geq 0$) and a family of comparison functions defined in terms of the auxiliary function $\phi(x):=|x|^\alpha$, where $\alpha \in (0,1]$. One needs that $\phi$ is $\Sigma_k$-superharmonic on its domain. It is known that for $\alpha=2-\frac{N}{k}$, the function $\phi$ is a classical $\Sigma_k$-harmonic away from the origin, but $\alpha > 0$ requires the condition $k > N/2$. This restriction can be interpreted in terms of the {\em Riesz characteristic} of the closed convex cone $\Sigma_k \subset \Ss(N)$ as described in Harvey-Lawson \cite{HL18a}. Using the measure theoretic techniques developed by Trudinger and Wang \cite{TW97, TW99} and Labutin \cite{La02}, perhaps it is possible to obtain the global H\"older bound \eqref{GHB1} if $k \leq N/2$. However, our intended focus is limited to maximum principle techniques and hence we have not pursue such improvements here.
\end{rem}
\begin{proof}[Proof of Theorem \ref{thm:GHB}] Since $u \in C(\overline{\Omega})$ by Theorem \ref{thm:EU}, the claim that 
$u \in C^{0, \alpha}(\overline{\Omega})$, reduces to proving the estimate \eqref{GHB1}. Notice that $u$ is $k$-convex (it is a $\Sigma_k$-admissible subsolution) and $u$ vanishes on the boundary and hence $u \leq 0$ in $\Omega$ by Corollary \ref{cor:cp1}. In addition, $u < 0$ in $\Omega$ by Theorem \ref{thm:SMP}. In particular,
\begin{equation}\label{sup_norm}
	||u||_{\infty} := \sup_{\Omega}|u| = \sup_{\Omega}(-u).
\end{equation}
For the H\"older estimate \eqref{GHB1}, it suffices to find $\rho > 0$ and  $C_{\rho} > 0$ for which 
\begin{equation}\label{GHB2}
|u(x) - u(y)| \leq C_{\rho} \, |x - y|^{\alpha}, \ \ \forall \ x, y \in \overline{\Omega} \ \ \text{with} \ |x - y| < \rho.
\end{equation}
In fact, as is well known, if \eqref{GHB2} holds, then using exploiting the boundedness of $u$ yields
$$
\sup_{\substack{x, y \in \overline{\Omega} \\ x \neq y}} \frac{|u(x) - u(y)|}{|x-y|^{\alpha}} \leq \max \left\{ \frac{2 ||u||_{\infty}}{\rho^{\alpha}}, C_{\rho} \right\}.
$$

In order to prove \eqref{GHB2}, first consider the case when $y$ lies on $\partial \Omega$ (the argument for $x \in \partial \Omega$ is the same). In this case, $u(y) = 0$ and $u(x) \leq 0$. Then the boundary estimate of Proposition \ref{prop:be3} shows that $y \in \partial \Omega$ and each $x \in \Omega_{d_0}$ one has  
\begin{equation}\label{GHB3}
|u(y) - u(x)|  = -u(x) \leq c_3 \, d(x)  = C_3 \, \min_{z \in \partial \Omega} |x-z| \leq C_3 |x-y|. 
\end{equation}
By choosing
\begin{equation}\label{GHB4}
	\rho \leq \min \{ d_0, 1 \} \ \ \text{and} \ \ C_{\rho} \geq C_3
\end{equation}
one has \eqref{GHB2} for each $\alpha \in (0,1]$ if $y$ (or $x$) lies on the boundary.

Next, let $y \in \Omega$ and consider the comparison function 
\begin{equation}\label{v_y}
	v_y(x):= u(y) + C_{\rho} \, |x-y|^{\alpha}
\end{equation}
One wants determine $\rho > 0$ sufficiently small and $C_{\rho} > 0$ sufficiently large (recall the restrictions \eqref{GHB4}) so that 
\begin{equation}\label{GHB5}
	u(x) \leq v_y(x) \ \ \text{for each} \ x \in \Omega \cap B_{\rho}(y)
\end{equation}
and hence
\begin{equation}\label{GHB6}
u(x) - u(y) \leq C_{\rho} \, |x-y|^{\alpha} \ \ \text{for each} \ x \in \Omega \cap B_{\rho}(y).
\end{equation}
Then, by exchanging the roles of $x$ and $y$, one would have
\begin{equation}\label{GHB7}
|u(x) - u(y)| \leq C_{\rho} \, |x-y|^{\alpha} \ \ \text{for each} \ x,y \in \Omega \ \text{with} \ |x-y|< \rho,
\end{equation}
which would then complete the proof.

In order to establish \eqref{GHB5}, notice that $u$ is a $\Sigma_k$-admissible solution of $S_k(D^2u) = f \geq 0$ in $\Omega$. In particular, $u$ is $\Sigma_k$-subharmonic ($k$-convex) in $\Omega$. Moreover, for $k > N/2$ one knows that for each $y \in \R^N$, the function defined by
\begin{equation}\label{fundamental solution}
	w_k(x):= |x -y|^{2-N/k} \ \ \text{for} \ x \in \R^N \setminus \{y\}
	\end{equation}
is smooth, $k$-convex and satisfies $S_k(w_k) \equiv 0$ on its domain (see section 2 of  \cite{TW99}). The same is obviously true for the translated version $v_y$ of \eqref{v_y} with the choice $\alpha:= 2 - N/k$ when $k > N/2$. Indeed, using the radial formula \eqref{S_k_radial1} of Lemma \ref{lem:radial} with $h(r):= u(y) + C_{\rho} \, r^{\alpha}$ one finds that 
\begin{equation}\label{FSCalc}
	S_j(D^2v_y(x)) = \left( C_{\rho} \, \alpha |x - y|^{\alpha - 2} \right)^j
	\frac{(N -1)!}{j! \, (N-j)!} \left[ (\alpha - 2)j + N \right] \ \ \text{for each} \ x \neq y.
\end{equation}
When $\alpha = 2 - N/k$,  this is positive for every $j = 1, \ldots, k-1$ and it vanishes for $j=k$. In particular, $v_y$ is $\Sigma_k$-superharmonic in every punctured ball $\dot{B}_{\rho}(y) = B_{\rho}(y) \setminus \{y\}$. Hence, by the comparison principle (Theorem \ref{thm:cp1}) for $\Sigma_k$ sub and superharmonics we will have
\begin{equation}\label{cp:uv}
	u(x) \leq v_y(x) \ \ \text{for each} \ x \in \Omega \cap \dot{B}_{\rho}(y)
\end{equation}
provided that
\begin{equation}\label{cp:boundary}
	u \leq v_y \ \ \text{on} \ \partial(\Omega \cap \dot{B}_{\rho}(y))
\end{equation}
where $\partial(\Omega \cap \dot{B}_{\rho}(y)) = \{y\} \cup (\partial B_{\rho}(y) \cap \Omega) \cup (\partial \Omega \cap \overline{B}_{\rho}(y))$. 

We analyze the three possibilities. At the point $y$, one has
$$
	u(y) = v_y(y) = u(y) + C_{\rho} \, |y - y|.
$$
Next, for $x \in \partial \Omega \cap \overline{B}_{\rho}(y)$ (which is empty if $B_{\rho}(y) \subset \subset \Omega$) one has
$$
u(x) = 0 \ \ \text{while} \ v_y(x) = u(y) + C_{\rho} \, |x-y|^{\alpha},
$$
where $u(y) < 0$ for $y \in \Omega$ as noted above. Since $x \in \partial \Omega$, with $\rho \leq  d_0$ the condition $|x-y| < \rho$ means that $y \in \Omega_{d_0}$ and one can again use the boundary estimate of Proposition \ref{prop:be3} to estimate $u(y)$ from below 
$$
	v_y(x) \geq  -C_3 \, |x-y| + C_{\rho} \, |x-y|^{\alpha} \geq (C_{\rho} - C_3)|x - y|^{\alpha} \geq 0,
$$ 
provided that $\rho \leq 1$ and $C_{\rho} \geq C_3$ as in \eqref{GHB4}.
Finally, if $x \in \partial \Omega \cap \overline{B}_{\rho}(y)$ we will have $v_y(x) = u(y) + C_{\rho} \, |x-y|^{\alpha} \geq u(x)$ if
$$
	|u(x) - u(y)| \leq C_{\rho} \, \rho^{\alpha}.
$$
Having now fixed $\rho \leq \min \{d_0, 1\}$, since $|u(x) - u(y)| \leq 2 ||u||_{\infty}$ it is enough to choose 
\begin{equation}\label{cp:end}
	C_{\rho} \geq \frac{2 ||u||_{\infty}}{\rho^{\alpha}},
\end{equation}
in addition to $C_{\rho} \geq C_3$.
\end{proof}

We now implement the iteration scheme (sketched above) to prove 
 the existence of a principal eigenfunction $\psi_1$ associated to $\lambda_1^{-} = \lambda_1^{-}(S_k, \Sigma_k)$ in the  ``regular case'' with $k > N/2$.

\begin{rem}\label{rem:UC}
	We will make use of the fact that the sets of $\Sigma_k$-subharmonic and $\Sigma_k$-superharmonic functions on $\Omega$ are closed under the operation of taking uniform limits in $\Omega$ of sequences. See property (5)' in \cite{HL09}, for example. 
\end{rem}

\begin{thm}\label{thm:psi_1} Suppose that $k > N/2$. Let $\Omega$ be a strictly $(k-1)$-convex domain of class $C^2$. If $\{v_n\}_{n \in \N}$ is the sequence of $k$-convex solutions \eqref{EVP_vn} with $0 < \lambda_n \nearrow \lambda_1^{-}$ as $n \to + \infty$, then the normalized sequence defined by $w_n:= v_n/||v_n||_{\infty}$ admits a subsequence which converges uniformly to a principal eigenfunction $\psi_1$ for \eqref{EVP}, which is negative on $\Omega$.
\end{thm}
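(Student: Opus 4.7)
The plan is to realize $\psi_1$ as the uniform limit, along a subsequence, of the renormalized sequence $w_n := v_n/\|v_n\|_{\infty}$. First I would note that each $v_n$ is strictly negative in $\Omega$: being $k$-convex with zero boundary data, $v_n \leq 0$ by Corollary~\ref{cor:cp1}, while $v_n\equiv 0$ is forbidden by the equation (since $S_k(0)=0\neq 1$), so Theorem~\ref{thm:SMP} gives $v_n<0$ in $\Omega$, and the PDE can be rewritten as $S_k(D^2 v_n) = 1 + \lambda_n|v_n|^k$. By $k$-homogeneity of $S_k$, the normalized $w_n$ is a $\Sigma_k$-admissible solution of
\[
S_k(D^2 w_n) + \lambda_n w_n |w_n|^{k-1} = \|v_n\|_{\infty}^{-k} =: \varepsilon_n,
\]
vanishes on $\partial\Omega$, is $k$-convex and negative, and satisfies $\|w_n\|_{\infty}=1$. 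For this to produce the desired eigenfunction in the limit, it is essential that $\varepsilon_n\to 0$, i.e.\ that $\|v_n\|_{\infty}\to +\infty$.

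The crucial blow-up $\|v_n\|_{\infty}\to+\infty$ is proved by contradiction, and is the step where the characterization $\lambda_1^- = \sup\Lambda_k$ enters decisively. If $\|v_n\|_{\infty}$ stayed bounded along some subsequence, Theorem~\ref{thm:GHB} would yield a uniform H\"older bound (its constant depending only on $\Omega$, $\alpha$ and $\sup_\Omega(-v_n)=\|v_n\|_{\infty}$), Arzel\`a--Ascoli would deliver a uniform limit $v_*\in C(\overline\Omega)$, and the standard stability of $\Sigma_k$-admissible viscosity solutions together with Remark~\ref{rem:UC} would make $v_*$ a $k$-convex $\Sigma_k$-admissible solution of $S_k(D^2 v_*) + \lambda_1^- v_*|v_*|^{k-1}=1$ vanishing on $\partial\Omega$ and negative in $\Omega$ (by the same argument used for $v_n$). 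For any $\delta\in(0,\|v_*\|_{\infty}^{-k})$, setting $\mu:=\lambda_1^-+\delta$, one would find
\[
S_k(D^2 v_*) + \mu v_*|v_*|^{k-1} = 1 - \delta|v_*|^k \geq 0,
\]
so $v_*\in\Phi_k^-(\Omega)$ witnesses $\mu\in\Lambda_k$ with $\mu>\lambda_1^-$, contradicting the definition of $\lambda_1^-$.

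Once $\|v_n\|_{\infty}\to+\infty$ is in hand, I need a uniform modulus of continuity for $\{w_n\}$. This requires tracking how the H\"older constant of Theorem~\ref{thm:GHB} depends on the data: in its proof, the boundary constant $C_3$ of Proposition~\ref{prop:be3} scales linearly in both $\sup_\Omega(-v_n)=\|v_n\|_{\infty}$ and $(\sup_\Omega f_n)^{1/k}\lesssim \|v_n\|_{\infty}$, and the interior matching constant $C_\rho$ is controlled by a multiple of $\|v_n\|_{\infty}$. Hence $|v_n(x)-v_n(y)|\leq K\|v_n\|_{\infty}|x-y|^\alpha$ with $K$ independent of $n$. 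Dividing by $\|v_n\|_{\infty}$ gives a uniform $C^{0,\alpha}$-bound on $\{w_n\}$, and Arzel\`a--Ascoli produces a subsequence $w_{n_j}\to\psi_1$ uniformly on $\overline\Omega$ with $\psi_1\in C^{0,\alpha}(\overline\Omega)$, $\psi_1=0$ on $\partial\Omega$ and $\psi_1\leq 0$ in $\Omega$.

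It remains to identify $\psi_1$ as a principal eigenfunction, which I would do in four short points: (i) $\psi_1 \not\equiv 0$, since each $w_n$ attains the value $-1$ at some $x_n\in\overline\Omega$ and uniform convergence forces $\psi_1(\tilde x)=-1$ for any cluster point $\tilde x$; (ii) $\psi_1$ is $k$-convex as a uniform limit of $k$-convex functions, by Remark~\ref{rem:UC}; (iii) by stability of $\Sigma_k$-admissible viscosity solutions under uniform convergence of both $w_{n_j}$ and of the equations (recalling $\varepsilon_n\to 0$ and $\lambda_n\to\lambda_1^-$), $\psi_1$ is a $\Sigma_k$-admissible solution of $S_k(D^2\psi_1)+\lambda_1^-\psi_1|\psi_1|^{k-1}=0$ in $\Omega$; (iv) $\psi_1$ is $k$-convex, non-positive, and not identically constant (as it takes both $-1$ and $0$), so Theorem~\ref{thm:SMP} gives $\psi_1<0$ in $\Omega$. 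The principal obstacle is step two, where the blow-up of $\|v_n\|_{\infty}$ hinges precisely on the supremum characterization of $\lambda_1^-$; a subsidiary technicality is the linear scaling of the H\"older constant in $\|v_n\|_{\infty}$, without which the renormalization would not compactify.
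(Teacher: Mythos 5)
Your proposal is correct and reproduces the essential content of Step~2 of the paper's proof: the blow-up of $\|v_n\|_\infty$ (the paper's Claim~4, proved by the same contradiction argument, exhibiting a negative $k$-convex supersolution for some $\mu>\lambda_1^-$), the renormalization $w_n:=v_n/\|v_n\|_\infty$, compactness via the global H\"older estimate, and the identification of the limit.

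Two points of comparison are worth making. First, the paper's proof also contains a substantial Step~1: it \emph{constructs} the solutions $v_n$ of \eqref{EVP_vn} by a monotone iteration $u_0=0$, $S_k(D^2 u_j)=1+\lambda|u_{j-1}|^k$, proving in three sub-claims that $\{u_j\}$ is decreasing, uniformly bounded (again via a blow-up/contradiction using the minimum principle), and convergent. You bypass this entirely by treating the existence of $v_n$ as part of the hypothesis, which the literal wording of the statement permits, but the paper evidently intends the theorem to include this existence result, and half of its proof is devoted to it. If you want your argument to be self-contained in the sense the authors intend, you should add the iteration. Second, for the uniform modulus of continuity on $\{w_n\}$ you trace the linear scaling of the H\"older constant of Theorem~\ref{thm:GHB} through Proposition~\ref{prop:be3} (the dependence on $\sup_\Omega(-u)$ and on $(\sup_\Omega f)^{1/k}$) and then divide by $\|v_n\|_\infty$. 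This is correct, but the paper's (implicit) route is cleaner: observe that $w_n$ itself is a $\Sigma_k$-admissible solution of $S_k(D^2 w_n)=\|v_n\|_\infty^{-k}+\lambda_n|w_n|^k=:\tilde g_n$ with $\|w_n\|_\infty\equiv 1$ and $\tilde g_n$ uniformly bounded, so Theorem~\ref{thm:GHB} applies directly to $w_n$ with $n$-independent data; no rescaling bookkeeping is needed. (Your observation that the constant in Theorem~\ref{thm:GHB} in fact also depends on $\sup_\Omega f$, not only on $\sup_\Omega(-u)$ as the statement declares, is accurate and worth recording, but the normalized application sidesteps the need to quantify it.) The remaining steps --- non-triviality of the limit from $\|w_n\|_\infty=1$, $k$-convexity by Remark~\ref{rem:UC}, stability of admissible viscosity solutions, and strict negativity via the strong maximum principle --- agree with the paper.
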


\begin{proof} We divide the proof into two big steps, with several claims to be justified.
	
\noindent{\bf  Step 1:} {\em For each $\lambda \in (0, \lambda_1^-)$, show that there exists a $\Sigma_k$-admissible solution $u$ to the Dirichlet problem \eqref{EVP2}; that is,}
\begin{equation}\label{EVP2}
	\left\{ \begin{array}{cc}
	S_k(D^2 u) = 1 - \lambda u |u|^{k-1}  & {\rm in} \ \Omega \\
	u = 0 & {\rm on} \ \partial \Omega
	\end{array} \right.
\end{equation}

As indicated above, we will look for $u$ as a decreasing limit of solutions $\{u_n\}_{n \in \N_0}$ of the Dirichlet problem  \eqref{EVP_n}, that is,
\begin{equation}\label{EVP_un}
	\left\{ \begin{array}{cc}
	S_k(D^2 u_n) = 1 - \lambda u_{n-1} |u_{n-1}|^{k-1} := f_n & {\rm in} \ \Omega \\
	u_n = 0 & {\rm on} \ \partial \Omega
\end{array} \right.
\end{equation}
With $u_0 \equiv 0$, we apply Theorem \ref{thm:EU} to find $u_1 \in C(\overline{\Omega})$ a $\Sigma_k$-admissible solution of
$$
	\mbox{	$S_k(D^2u_1) = 1$ \ \ in \ $\Omega$ \ \ \ \ \and \ \ \ \ $u_1 = 0$ \ on $\partial \Omega$.}
$$
Since $u_1$ is a $\Sigma_k$-admissible solution it is necessarily $k$-convex and hence satisfies the (strong) maximum principle so that $u_1 \leq 0$ on $\overline{\Omega}$ (and $u_1 < 0$ in $\Omega$) and hence
$$
f_2:= 1 - \lambda u_1 |u_1|^{k-1} = 1 + \lambda |u_1|^k \geq 0
$$
and the induction proceeds using Theorem \ref{thm:EU} to produce the sequence $\{u_n\}_{n \in \N_0}$ of non-positive $\Sigma_k$-admissible solutions which also satisfy
\begin{equation}\label{SMP_un}
	u_n < 0 \ \ \text{on} \ \Omega \ \text{for each} \ n \in \N.
\end{equation}

\noindent  
{\bf Claim 1:} {\em  $\{u_n\}_{n  \in \N_0}$ is a decreasing sequence of $k$-convex functions.}

	By construction, all of the functions vanish on $\partial \Omega$ and are negative in $\Omega$ for $n \geq 1$. We use induction. As we have seen $u_1 < 0 := u_0$ on $\Omega$.  Assuming that $u_{n-1} \leq u_n$ on $\Omega$, we need to show that $u_{n+1} \leq u_n$ on $\Omega$. We have that $u_{n+1}$ is a $\Sigma_k$-admissible solution of
$$
	S_kD^2(u_{n+1}) = 1 - \lambda u_n |u_n|^{k-1} = 1 + \lambda |u_n|^{k} \ \ \text{in} \ \Omega,
$$
	where we have again used $u_n \leq 0$, but then the inductive hypothesis yields
\begin{equation}\label{un_decreasing}
	u_{n-1} \leq u_n \leq 0 \ \ \Rightarrow \ \ |u_n| = -u_n \geq -u_{n-1} = |u_{n-1}| 
\end{equation}
	and hence
$$
	S_kD^2(u_{n+1}) \geq 1 + \lambda |u_{n-1}|^k = f_n = S_k(D^2u_n).
$$
	By the comparison principle, one concludes that $u_{n+1} \leq u_n$ on $\Omega$.
	
\noindent  
{\bf Claim 2:} {\em The sequence  $\{u_n\}_{n  \in \N}$ is bounded in sup norm; that is, there exists $M > 0$ and finite such that}
\begin{equation}\label{un_bounded}
	||u_n||_{\infty} = \sup_{\Omega}(-u_n) \leq M \ \ \emph{for each} \ n \in \N.
\end{equation}
	
	We argue by contradiction, assuming that the increasing sequence $||u_n||_{\infty}$ satisfies $\lim_{n \to +\infty} ||u_n||_{\infty} = +\infty$. Since $u_n < 0$ on $\Omega$ for each $n \in \N$, we can define
\begin{equation}\label{define_vn}
	v_n:= \frac{u_n}{||u_n||_{\infty}} \ \ \text{so that} \ \ ||v_n||_{\infty} = 1 \ \ \text{for each} \ n \in \N.
\end{equation}
	Since the equation $S_k(D^2u_n) = 1 + \lambda |u_{n-1}|^k$ is homogeneous of degree $k$, one has
\begin{equation}\label{vn_eqn1}
	S_k(D^2v_n) = \frac{1}{||u_n||_{\infty}^k}  + \lambda \frac{  ||u_{n-1}||_{\infty}^k}{||u_n||_{\infty}^k} \, |v_{n-1}|^k.
\end{equation}
	Now, making again use of the negativity and monotonicity in \eqref{un_decreasing} one has
\begin{equation}\label{define_betan}
		\beta_n:= \frac{||u_{n-1}||_{\infty}^k}{||u_n||_{\infty}^k} \in (0, 1]
\end{equation}
and combining \eqref{vn_eqn1} with \eqref{define_betan} yields
\begin{equation}\label{vn_eqn2}
	S_k(D^2 v_n) =  \frac{1}{||u_n||_{\infty}^k}  + \lambda \beta_n |v_{n-1}|^k := g_n,
\end{equation}	
	where $g_n\in C(\overline{\Omega})$ and is non-negative. Since $k > N/2$ and since the global H\"older bound of Theorem \ref{thm:GHB} depends only on $\Omega, \alpha$ and $||v_n||_{\infty} \equiv 1$, the sequence of solutions $\{v_n\}_{n \in \N}$ is bounded in  $C^{0, 2 - N/k}(\overline{\Omega})$ and hence admits $v \in C(\overline{\Omega})$ and a subsequence such that
\begin{equation}\label{uniform_limit}
	v_{n_j} \to v \ \ \text{uniformly on} \ \overline{\Omega}.
\end{equation}
In addition,  $0 < \beta_{n_j} \leq 1$ is increasing so converges to some $\beta_{\infty} \in (0, 1]$. The uniform limit $v$ is a $\Sigma_k$-admissible (super)solution of
$$
	\mbox{	$S_k(D^2 v) + \lambda \beta_{\infty} v |v|^{k-1} = 0$ \ \ in \ $\Omega$ \ \ \ \ \and \ \ \ \ $v = 0$ \ on $\partial \Omega$,}
$$
where $\lambda \beta_{\infty} \leq \lambda < \lambda_1^-$. By the minimum principle characterization of $\lambda_1^-$, we must have $v \geq 0$ in $\Omega$. However, each $u_n \in C(\overline{\Omega})$ is negative in $\Omega$ and hence has a negative minimum at some interior point $x_n \in \Omega$ and hence $v_n(x_n) = -1$ for each $n$ which contradicts the fact that uniform limit of \eqref{uniform_limit} satisfies $v \geq 0$ on $\Omega$.

\noindent  
{\bf Claim 3:} {\em The sequence $\{ u_n \}_{n \in \N}$ admits a subsequence $\{u_{n_j}\}_{j \in \N} \subset C(\overline{\Omega})$ which converges uniformly on $\overline{\Omega}$ to a $\Sigma_k$-admissible solution $u$ of \eqref{EVP2}.}
	
	Exploiting the boundedness of Claim 3 for the sequence $\{ u_n \}_{n \in \N}$, we can use the same argument involving the global H\"older estimate of Theorem \ref{thm:GHB} to  extract a uniformly convergent subsequence with limit $u \in C(\overline{\Omega})$ with limit $u$ which is a $\Sigma_k$-admissible solution of \eqref{EVP2}. This completes Step 1 of the proof.
	
\noindent{\bf  Step 2:} {\em Show that there exists $\psi_1 \in C(\overline{\Omega})$ which is negative in $\Omega$ and is a $\Sigma_k$-admissible solution of \eqref{EVP}; that is, }	
\begin{equation}\label{EVP_recall}
	\left\{ \begin{array}{cc}
	S_k(D^2 \psi_1) + \lambda_1^{-}\psi_1 |\psi_1|^{k-1} = 0 & {\rm in} \ \Omega \\
	\psi_1 = 0 & {\rm on} \ \partial \Omega
	\end{array} \right.
\end{equation}
	
	Consider a sequence $\{ \lambda_n\}_{n \in \N} \in (0, \lambda_1^-)$ with $\lambda_n \nearrow \lambda_1^-$ and the associated sequence $\{ v_n\}_{n \in \N} \subset C(\overline{\Omega})$ of solutions to \eqref{EVP2} with $\lambda = \lambda_n$; that is, 
\begin{equation}\label{EVP2n}
	\left\{ \begin{array}{cc}
	S_k(D^2 v_n) = 1 - \lambda_n v_n |v_n|^{k-1}  & {\rm in} \ \Omega \\
	v_n = 0 & {\rm on} \ \partial \Omega
	\end{array} \right.
\end{equation}
	
	 Since each $v_n$ is $\Sigma_k$-subharmonic in $\Omega$ and vanishes on the boundary, $v_n < 0$ on $\Omega$ for each $n$.
	 
\noindent  
{\bf Claim 4:} {\em One has $||v_n||_{\infty} \to + \infty$ as $n \to +\infty$.}

We argue by contradiction. If not, then again by the global H\"older bound of Theorem \ref{thm:GHB} we can extract a subsequence of these functions which are $\Sigma_k$-subharmonic and negative in $\Omega$ and  which converges uniformly on $\overline{\Omega}$ to a $\Sigma_k$-admissible solution $w \in C(\overline{\Omega})$ to the Dirichlet problem 	 
\begin{equation}\label{EVPw}
	\left\{ \begin{array}{cc}
	S_k(D^2 w) +   \lambda_1^- w |w|^{k-1} = 1 & {\rm in} \ \Omega \\
	w = 0 & {\rm on} \ \partial \Omega.
	\end{array} \right.
\end{equation}
Since $w \in C(\overline{\Omega})$ is non-positive, there exists $\veps > 0$ such that
\begin{equation}\label{w_bound}
	-\veps w |w|^{k-1} \leq 1 \ \ \text{in} \ \overline{\Omega}.
\end{equation}
Hence $w$ is a $k$-convex, negative in $\Omega$ and satisfies (in the $\Sigma_k$-admissible viscosity sense)  
\begin{equation}\label{subsolution}
	S_k(D^2 w) + (\lambda_1^- + \veps) w |w|^{k-1} \geq 0,
\end{equation}
which contradicts the Definition \ref{defn:PEV} of $\lambda_1^-(S_k, \Sigma_k)$. Hence Claim 4 holds.

Finally, consider the normalized sequence defined by $w_n:= v_n/||v_n||_{\infty}$ which are $\Sigma_k$-admissible viscosity solutions of 
\begin{equation}\label{wn_solution}
	S_k(D^2 w_n) + \lambda_n w_n |w_n|^{k-1} = \frac{1}{||v_n||_{\infty}} \ \ \text{in} \ \Omega \ \ \ \ \text{and} \ w_n = 0 \ \ \text{on} \ \partial \Omega.
\end{equation}
The uniformly bounded sequence $\{w_n\}_{n \in \N} \subset  C(\overline{\Omega})$ admits a subsequence which converges uniformly on $\overline{\Omega}$ some $\psi_1 \in C(\overline{\Omega})$ which is a $\Sigma_k$-admissible solution of the eigenvalue problem \eqref{EVP} as $\frac{1}{||v_n||_{\infty}} \to 0$ as $n \to \infty$.
\end{proof}

\section{Bounds on the principal eigenvalue.} 

In this section, we will provide an upper bound for the generalized principle eigenvalue $\lambda_1^-(S_k, \Sigma_k)$ as defined in Definition \ref{defn:PEV}. Recall that the lower bound 
\begin{equation}\label{lower_bound_PEV}
	\lambda_1^-(S_k, \Sigma_k) \geq C_{N,k}R^{-2k} \ \ \text{with} \ C_{N,k} = \left( \begin{array}{c} N \\ k \end{array} \right) := \frac{N!}{k! (N - k)!}
\end{equation}
was given in Lemma \ref{lem:PEV} for bounded domains $\Omega$ which are contained in a ball $B_R(0)$. 

An upper bound will be found by constructing a suitable test function which contradicts the minimum principle of Theorem \ref{thm:MPC} on a ball $B_R(0) \subset \Omega$ and makes use of the monotonicity of $\lambda_1^-(S_k, \Sigma_k)$ with respect to set inclusion. More precisely, if we denote by $\lambda_1^-(\Omega)$ the generalized principal eigenvalue $\lambda_1^-(S_k, \Sigma_k)$ with respect to the bounded domain $\Omega$ then one has that
\begin{equation}\label{monotonicity_PEV}
	\Omega^{\prime} \subset \Omega \ \ \Rightarrow \ \ \lambda_1^- (\Omega) \leq \lambda_1^- (\Omega^{\prime}).
\end{equation}
Indeed, since
$$
	\lambda_1^{-}(\Omega):= \sup \{ \lambda \in \R: \ \exists \, \psi \in \Phi_k^{-}(\Omega) \ \text{with} \ S_k(D^2 \psi) + \lambda \psi |\psi|^{k-1} \geq 0 \ \text{in} \ \Omega\},
$$
if $\lambda$ admits such a $\psi$ for $\Omega$ then it also admits $\psi$ for $\Omega^{\prime}$ and hence \eqref{monotonicity_PEV} holds. Our upper bound is contained in the following theorem.

\begin{thm}\label{thm:upper-bound_PEV}
	If a bounded domain $\Omega$ contains the ball $B_R = B_R(0)$, then
\begin{equation}\label{upper_bound_PEV}
		\lambda_1^{-}(\Omega) \leq 4^k \,  C_{N,k} R^{-2k} \ \ \text{with} \ C_{N,k} = \left( \begin{array}{c} N \\ k \end{array} \right) := \frac{N!}{k! (N - k)!}.
\end{equation} 
\end{thm}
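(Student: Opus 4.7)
Following the strategy suggested in the excerpt, the plan is to combine the monotonicity property \eqref{monotonicity_PEV} with the construction of a test function on $B_R$ that violates the minimum principle of Theorem~\ref{thm:MPC}. Since $B_R \subset \Omega$, \eqref{monotonicity_PEV} yields $\lambda_1^-(\Omega) \le \lambda_1^-(B_R)$, so it suffices to prove
$$\lambda_1^-(B_R) \le 4^k\,C_{N,k}\,R^{-2k}.$$
To this end I intend to construct, for each $\lambda$ slightly above $4^k\,C_{N,k}\,R^{-2k}$, a function $u \in \LSC(\overline{B_R})$ that is a $\Sigma_k$-admissible supersolution of $S_k(D^2 u) + \lambda\, u|u|^{k-1} = 0$ in $B_R$, with $u \ge 0$ on $\partial B_R$ and $u(x_0) < 0$ at some interior $x_0$. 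By the contrapositive of Theorem~\ref{thm:MPC}, the existence of such $u$ forces $\lambda \ge \lambda_1^-(B_R)$, and letting $\lambda$ decrease to the threshold closes the bound.

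The value $4^k\,C_{N,k}\,R^{-2k}$ is dictated by the quadratic model $\psi(x) = |x|^2 - R^2/2$, which on the concentric ball $B_{R/\sqrt{2}}$ has $\psi \le 0$, $D^2\psi = 2I \in \Sigma_k^\circ$, $S_k(D^2\psi) = 2^k\,C_{N,k}$, and $|\psi|_{\max} = R^2/2$ attained at the origin. The pointwise supersolution requirement $S_k(D^2\psi) - \lambda|\psi|^k \le 0$ at the origin coincides precisely with the target: $\lambda = 2^k\,C_{N,k}/(R^2/2)^k = 4^k\,C_{N,k}\,R^{-2k}$. For $\lambda$ strictly larger than this threshold, the pointwise inequality holds on a proper sub-ball $B_{r_\lambda}$ about the origin with $r_\lambda > 0$ determined by $\lambda$, so $\psi$ is a genuine admissible supersolution there.

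\textbf{Main obstacle.} The chief difficulty is promoting this local admissible supersolution from the inner ball $B_{r_\lambda}$ to the full $B_R$ while preserving admissibility at every point. A naive extension by zero on the outer annulus fails for two reasons: the pointwise inequality breaks down on an interior shell where $|\psi|$ becomes small, and any jump or corner in $u$ produces subjets $(p,A) \in \overline{J}^{2,-}u(x_0)$ with $A \in \Sigma_k^\circ$ of arbitrarily large norm, destroying the supersolution condition there. The plan is to interpolate by a smooth radial profile $g(|x|)$ on a transition annulus arranged so that $(N-1)g'(r)/r + g''(r) < 0$ throughout the shell; then the tangential/radial eigenvalue vector of $D^2(g\circ|\cdot|)$ gives $\sigma_1 < 0$, hence $D^2 u \notin \Sigma_1 \supset \Sigma_k$ there, and the supersolution condition becomes vacuous by Definition~\ref{defn:sub_super}(b). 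The matching subjets at the two interface spheres are controlled by one-sided analyses in the spirit of Proposition~\ref{prop:be1} on the inner side (Hopf-type barrier) and by the compatibility of the zero outer extension with $D^2 u = 0 \in \partial\Sigma_k$ on the outer side, exactly as in the extension-by-zero argument used for principal eigenfunctions on sub-balls. Verifying the precise eigenvalue calculus of the interpolation profile is the core computation closing the argument.
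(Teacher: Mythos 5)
Your proposal takes a genuinely different route from the paper, and it has a real gap. The paper's proof uses a single smooth quartic, namely $u(x) = -\tfrac{1}{4}(R^2 - |x|^2)^2$, directly on all of $B_R$; there is no inner ball, no transition annulus, and no zero extension. The quartic vanishes to second order on $\partial B_R$ (at $|x|=R$ its Hessian eigenvalues are $-2R^2$ and $0,\dots,0$, so $D^2u\notin\Sigma_1\supset\Sigma_k$ and the supersolution test is vacuous near the boundary), is strictly negative in $B_R$, and lies in $\Sigma_k^{\circ}$ near the origin, so $D^2u$ passes continuously from $\Sigma_k^{\circ}$ out of $\Sigma_k$ as $|x|$ increases. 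All of the matching issues you identify as the "chief difficulty" simply do not arise, because the function is $C^{\infty}$ on $\overline{B}_R$ and already vanishes on $\partial B_R$.

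The gap in your plan is structural, not just unfinished bookkeeping. You want $\sigma_1(D^2u)<0$ throughout the transition shell so that the admissible supersolution condition is vacuous there. But a $C^2$ match with $\psi(x)=|x|^2-R^2/2$ at the inner interface sphere $\{|x|=r_\lambda\}$ forces $D^2u=2I$, hence $\sigma_1(D^2u)=2N>0$, on that sphere; by continuity $\sigma_1>0$ persists (and $D^2u$ remains positive definite, hence in $\Sigma_k^{\circ}$) on an inner neighborhood of the shell. On that sub-annulus the pointwise inequality $S_k(D^2u)\le\lambda|u|^k$ must actually be verified, and by your own choice of $r_\lambda$ you have equality there with nothing said about the interpolation profile beyond it. Dropping $C^2$ matching to $C^{1,1}$ reintroduces the uncontrolled-subjet problem you flag. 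In short, the supersolution test cannot be made vacuous across the whole shell, and the one region where it is not vacuous is exactly where your computation stops. The paper's single-function choice avoids all of this; if you want to salvage your construction you would need, at minimum, to exhibit the interpolating profile explicitly and verify the $\Sigma_k$-admissible supersolution condition on the sub-annulus where $D^2u\in\Sigma_k$, keeping track of how the constant $4^k C_{N,k}R^{-2k}$ (which you only pin down at a single point, the origin) survives this verification.
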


\begin{proof} Consider the radial test function (as used in \cite{CNS85})  defined by
\begin{equation}
	u(x) := - \frac{1}{4} \left(R^2 - |x|^2 \right)^2
\end{equation}
and let $r:= |x|$. It suffices to show that $u$ is a $\Sigma_k$-admissible supersolution of
\begin{equation}\label{test_equation}
	S_k(D^2 u) + C u |u|^{k-1} = 0 \ \text{in} \ \Omega
\end{equation}
with $C = 4^k \,  C_{N,k} R^{-2k}$. Indeed, notice that:
\begin{equation}\label{test1}
	u \in C^{\infty}(\R^N) \ \ \text{and hence} \ u \in \LSC(\overline{B}_R);
\end{equation}
\begin{equation}\label{test2}
  u = 0 \ \text{on} \ \partial {B}_R;
\end{equation}
and
\begin{equation}\label{test3}
	\mbox{$B_R \in C^{\infty}$ is strictly $(k-1)$-convex for each $k \in \{1, \ldots , N \}$.}
\end{equation}
However,
\begin{equation}\label{test4}
 	u < 0 \ \text{on} \ \Omega
\end{equation}
 and hence $u$ does not satisfy the minimum principle of of Theorem \ref{thm:MPC} on $B_R$ and hence one must have
$$
	C = 4^k C_{N,k} R^{-2k} \geq \lambda_1^{-}(B_R) \geq \lambda_1^{-}(\Omega),
$$
which would complete the proof.

Since $u \in C^2(B_R)$, it will be a $\Sigma_k$-admissible supersolution of \eqref{test_equation} provided that 
\begin{equation}\label{test_equation2}
	S_k(D^2 u(x)) + C u(x) |u(x)|^{k-1} \leq 0 \ \text{for each} \ x \in B_R,
\end{equation}
as follows easily from Remark \ref{rem:Sk_subsuper} (b). Using the radial formula \eqref{S_k_radial2} with $h(r):= -(R^2-r^2)^2/4$ one finds
$$
	S_k(D^2u) =  (R^2 - r^2)^{k-1} \left( \begin{array}{c} N -1 \\ k -1 \end{array} \right) \left[ \frac{N}{k} (R^2 - r^2) - 2r^2 \right] \leq   (R^2 - r^2)^{k} \left( \begin{array}{c} N  \\ k \end{array} \right) 
$$
and hence
$$
	S_k(D^2u) + Cu|u|^{k-1} \leq (R^2 - r^2)^{k} \left[ \left( \begin{array}{c} N  \\ k \end{array} \right) - \frac{C}{4^k}R^{2k} \right] \leq 0
$$
if
$$
	C \geq 4^k  \left( \begin{array}{c} N  \\ k \end{array} \right) R^{-2k},
$$
as claimed.
\end{proof}

\end{document}